\newtheorem{thm}{Theorem}[section]
\newtheorem{lem}{Lemma}[section]
\newtheorem{cor}{Corollary}[section]
\newtheorem{prop}{Proposition}[section]
\newtheorem{rem}{Remark}[section]
\theoremstyle{definition}
\begin{document}
\numberwithin{equation}{section}

 \title[On quaternionic  complexes over unimodular   quaternionic   manifolds ]{
On quaternionic  complexes over  unimodular  quaternionic   manifolds    }
\author {Wei Wang}
\begin{abstract}Penrose's two-spinor notation for $4$-dimensional Lorentzian manifolds is extended to two-component   notation for
quaternionic   manifolds,
which is a    useful tool for calculation. We can construct
 a family of quaternionic  complexes over unimodular   quaternionic   manifolds only by elementary calculation. On complex
 quaternionic manifolds  as
 complexification  of
quaternionic  K\"ahler manifolds, the existence of these complexes was established by Baston by using twistor transformations  and
spectral sequences. Unimodular   quaternionic
manifolds constitute a  large nice class of quaternionic  manifolds: there exists a very special curvature
decomposition; the   conformal change of
a unimodular   quaternionic structure is still unimodular   quaternionic; the complexes over such manifolds are conformally invariant.
This class of manifolds is the real version of torsion-free QCFs  introduced by Bailey   and   Eastwood. These
complexes  are elliptic. We also obtain a Weitzenb\"ock  formula to establish   vanishing of the cohomology groups of these
complexes
for quaternionic  K\"ahler manifolds with negative scalar curvatures.
\end{abstract}

\thanks{Supported by National Nature Science
Foundation
  in China (No. 11571305)\\   Department of Mathematics,
Zhejiang University, Zhejiang 310027,
 P. R. China, Email:   wwang@zju.edu.cn}
 \maketitle

\section{Introduction}
Quaternionic   manifolds  are important in supersymmetric theory in physics, in particular in nonlinear   sigma models. It is well known
that the supersymmetric  sigma models are deeply
related to geometries with complex structures: the target   manifold must be a K\"ahler manifold      in $ N=1$
theories; it must be a hyperK\"ahler manifold    in
   rigid $ N=2$ supersymmetric theories; while in local supersymmetric $ N=2$   theories, it must be a quaternionic K\"ahler manifold of
   negative curvature, etc.. The quaternionic complexes over quaternionic K\"ahler manifolds were used to investigate  $N=2$
   supersymmetric black holes recently   \cite{CLW}. The
  Baston operator in the these complexes appears in
quantization of $ N=2$ supergravity
black holes \cite{NPV}. Physicists are also  interested in
supersymmetric and superconformal theory over more general   quaternionic manifolds  \cite{BdHCGV}.  The geometry
of quaternionic   manifolds   is an active direction of research in last four  decades  (cf. e.g.   \cite{Ba}  \cite{CS2} \cite{HKLR}
\cite{IM} \cite{Sa} \cite{Sa86} \cite{Sw} \cite{Ver} and references therein).

Recall that an {\it  almost quaternionic structure}  on a manifold $M$ is a rank-$3$ subbundle of End$ TM$
which is locally spanned by  three almost complex structures  on $TM$ satisfying the commutating relation of quaternions,  i.e. the
frame
bundle of $M$   reduces   to a principal bundle $P$ with structure group ${\rm {GL}}(n,\mathbb{H}){\rm {Sp}}(1)\cong {\rm
{GL}}(n,\mathbb{H})\times_{\mathbb{Z}_2}{\rm {Sp}}(1)$, where ${\rm {Sp}}(1) $ is the Lie group  of right multiplying unit
quaternions.  It is a  {\it    quaternionic  manifold}   if there exists a torsion-free connection
on $P$. It is called   {\it quaternionic K\"ahler} if the Levi-Civita connection for the metric  preserves
the   quaternionic structure, i.e.    the frame
bundle of $M$   reduces   to    a  principal  ${\rm {Sp}}(n ){\rm {Sp}}(1)$-bundle with a torsion-free connection.
   A     quaternionic  manifold    $M$ is
 called   {\it unimodular}    if the quaternionic  connection   preserves  a volume form  on $M$, i.e.  the frame
bundle of $M$ reduces  to    a  principal  ${\rm {SL}}(n,\mathbb{H}){\rm {Sp}}(1)$-bundle with a torsion-free connection.

Given a representation $W$ of $ {\rm {GL}}(n,\mathbb{H})\times {\rm {Sp}}(1)$ (a double covering of ${\rm {GL}}(n,\mathbb{H}){\rm
{Sp}}(1)$), choose a lift of $P$ to a  principal  ${\rm {GL}}(n,\mathbb{H})\times {\rm {Sp}}(1)$-bundle $\widetilde{P}$. Then we can
define the associated bundle $ \widetilde{P} \times_{{\rm {GL}}(n,\mathbb{H})\times{\rm {Sp}}(1)} W$. Such a lifting       always exists
locally, and exists globally   when the obstruction to the lifting in  $H^2(M,\mathbb{Z}_2)$ vanishes, e.g. when it is a
$8n$-dimensional quaternionic K\"ahler manifold, (cf. section 2 in \cite{Sa} and section 2 in \cite{Sa86}). In the sequel, we assume
that  such a lifting       always exists.
  Taking the standard ${\rm {GL}}(n,\mathbb{H})$-module $\mathbb{C}^{2n}$ and ${\rm {Sp}}(1)$-module $\mathbb{C}^{2 }$, we have
  associated vector bundles
 \begin{equation}\label{eq:associated}
  E:= \widetilde{P} \times_{{\rm {GL}}(n,\mathbb{H})\times {\rm {Sp}}(1)}\mathbb{C}^{2n},\qquad H  := \widetilde{P} \times_{{\rm
  {GL}}(n,\mathbb{H})\times{\rm {Sp}}(1)}\mathbb{C}^{2 },\qquad  T  M   \cong  \widetilde{P} \times_{{\rm {GL}}(n,\mathbb{H})\times {\rm
  {Sp}}(1)}\mathbb{H}^{ n } .\end{equation}
  A connection on the principal bundle $ {P}$ is trivially lifted to the principal bundle $\widetilde{P}$, and so induces    connections
  on associated vector bundle
$E $,  $H$ and $T  M$, respectively.
It is well known \cite{Sa} \cite{Sa86} that
the complexified tangent  bundle of an
almost quaternionic manifold $M$   decomposes as the tensor product
\begin{equation}\label{eq:identification0}
  \mathbb{C} TM\cong E\otimes H.
 \end{equation}
Denote by $\Gamma( V)$   the space of smooth sections of a vector bundle $V$ over $M$.  In this paper we will discuss the construction
of  a family of elliptic differential
 complexes
over  a unimodular   quaternionic   manifold $M$ ($M$ is right conformally flat when  $\dim_{\mathbb{R}}M=4$):
\begin{equation}\label{eq:quaternionic-complex-diff}\begin{split}
0\longrightarrow\Gamma\left( \odot^{k }{H}^* \right) &
\xrightarrow{D_0^{(k)}}\Gamma \left(  \Lambda^1 {E}^*\otimes\odot^{k-1
}{H}^*\right)
\xrightarrow{D_1^{(k)}}\cdots\cdots\longrightarrow \Gamma\left (  \Lambda^k
{E}^*\right) \xrightarrow{D_k^{(k)}} \Gamma (
\Lambda^{k+2} {E}^*)\\& \xrightarrow{D_{k+1}^{(k)}}\Gamma \left(
\Lambda^{k+3} {E}^*\otimes{H}\right) \longrightarrow\cdots\cdots
\xrightarrow{D_{2n-2}^{(k)}} \Gamma\left(  \Lambda^{2n}
{E}^*\otimes
\odot^{2n-k-2}{H}\right )\longrightarrow
0,\end{split}
\end{equation}$k=0,1,\ldots$,
  where $\Lambda^{q} {E}^*$ is the $q$-th exterior product of $ {E}^*$, and $\odot^{p}{H}^*$ is the $p$-th symmetric product of $ H^*$.
  The first operator  $D_0^{(k)}$ is  called the
{\it $k$-Cauchy-Fueter operator}.  $D_j^{(k)}$'s are   differential operators of the first order  except   $D_k^{(k)}$, which is of the
second order (cf. Theorem \ref{thm:k-Cauchy-Fueter}).
By using the twistor transformation  and spectral sequences,
Baston \cite{Ba} proved the existence of these   complexes over complex quaternionic manifolds. He generalized the result and the
complex geometric method of   Eastwood, Penrose  and Wells
\cite{EPW} for $n=1$. A {\it complex quaternionic manifold} $\widetilde{M} $ with $\dim_{\mathbb{C}}\widetilde{M} =4n$ is a complex
manifold whose  holomorphic tangential
bundle  decompose as
$
    T\widetilde{M} \cong \widetilde{E} \otimes\widetilde{ H },
$
 where $\widetilde{E} $  and $ \widetilde{H} $ are holomorphic  vector bundles of dimension $2n$ and  $2$, respectively, and there
 exists a torsion-free holomorphic   connection preserving  symplectic forms $\epsilon$ on  $\widetilde{E} $  and $\varepsilon$ on $
 \widetilde{H} $. Then
 $\epsilon\otimes\varepsilon$ is a complex Riemannian metric on $\widetilde{M} $. Baston used Levi-Civita connection on $\widetilde{M}$
 to construct these complexes  after section 2.6 in his paper \cite{Ba}. So he constructed   complexes over
 quaternionic  K\"ahler manifolds.
 Because the twistor transformation is a   complicated technique in complex geometry, it is  interesting to construct   complexes
 (\ref{eq:quaternionic-complex-diff})  by only using elementary method of differential geometry.

Another motivation to consider these complexes comes from the function theory of several quaternionic variables. We write a vector  in
the quaternionic space $\mathbb H^{n}$ as
$\mathbf{q}=(\mathbf{q}_{0},\ldots,\mathbf{q}_{n-1})$
 with $\mathbf{q}_{l}=x_{4l+1}+x_{4l+2}\mathbf{i}+x_{4l+3}\mathbf{j}+x_{4l+4}\mathbf{k}\in \mathbb H$,
 $l=0,1,\ldots,n-1.$
 The  usual {\it Cauchy-Fueter
operator} is
defined as
\begin{equation*} \label{eq:CF}  \mathscr  D  :C^1( \mathbb H^{n}, \mathbb H )\rightarrow C (\mathbb H^{n}, \mathbb H^{n}),\qquad \qquad
\mathscr D f=\left(\begin{array}{c} \overline\partial_{
{\mathbf{q}}_0}f\\\vdots\\  \overline\partial_{ {\mathbf{q}}_{n-1  }}f\end{array}\right),
\end{equation*}
for $f\in C^1(\mathbb H^{n}, \mathbb H )$,  where $
 \overline{\partial}_{\mathbf{q}_{l}}
 =\partial_{x_{4l+1}}+\mathbf{i}\partial_{x_{4l+2}}+\mathbf{j}\partial_{x_{4l+3}}+\mathbf{k}\partial_{x_{4l+4}},
$
$l=0,1,\ldots,n-1$.
 A function $f:\mathbb H^{n}\rightarrow \mathbb H$ is called {\it (left) regular} if
$
  \mathscr  D  f\equiv 0  $
  on $   \mathbb H^{n}$. As in the theory
of several complex variables,  to investigate   regular
quaternionic  functions, it is important to solve the {\it
non-homogeneous Cauchy-Fueter equation}
\begin{equation}\label{eq:Cauchy-Fueter-equations} \mathscr D f=  h
\end{equation}
for prescribed $  h\in C^2(\Omega, \mathbb{H}^n)$ over a domain $\Omega$. This system is overdetermined, i.e. the number of equations
is larger than the number of unknown functions for $n>1$. So for (\ref{eq:Cauchy-Fueter-equations}) to be solvable, $  h$ must
satisfy  some compatible condition. This condition was found by
Adams,
 Loustaunau,   Palamodov  and   Struppa    \cite{adams2} with the help of computer algebra method, namely,
\begin{equation*}\label{eq:compatible}\mathscr D_1  h=0,
\end{equation*}
for some differential operator   of second
order $\mathscr D_1:C^2(\Omega,\mathbb{H}^{ n})\rightarrow C(\Omega,
\Lambda^3\mathbb{H}^{ n})$. In fact, there exists a differential complex
corresponding to the Dolbeault complex in    the theory of several
complex variables:
\begin{equation}\label{eq:Q-complex}
0\rightarrow \Gamma(\Omega, \mathbb{H} )\xrightarrow {\mathscr D
}\Gamma(\Omega,  \mathbb{H}^{ n}) \xrightarrow {\mathscr D_1
}\Gamma(\Omega, \Lambda^3\mathbb{H}^{ n})\rightarrow\cdots
 \cdots \rightarrow 0,
\end{equation}
  called the  {\it Cauchy-Fueter complex}
 (cf. \cite{CSS} \cite{CSSS} and references therein). It was realized later   that the Cauchy-Fueter operator
 is exactly the $1$-Cauchy-Fueter operator  and  the Cauchy-Fueter complex \cite{bures} \cite{CSS} is equivalent   to a sequence
 obtained by Baston in \cite{Ba}, although
 Baston's result
 is a complexified version. In \cite{Wa10}, the author introduced   notions of the $k$-Cauchy-Fueter operator  on the quaternionic space
 $\mathbb{H}^n$  and {\it $k$-regular functions} annihilated by this operator. In the function theory,
 (\ref{eq:quaternionic-complex-diff}) is  called the {\it $k$-Cauchy-Fueter  complex}.
 The $k$-Cauchy-Fueter  complex  over   $\mathbb{H}^n$ was explicitly written down by using the twistor transformation  and spectral
 sequences \cite{Wa10} (see also \cite{bS}).
 By solving the non-homogeneous $k$-Cauchy-Fueter equations, we showed the Hartogs'  phenomenon  for
  $k$-regular functions \cite{Wa10}. To develop the function theory over curved    manifolds, we need to write down these complexes  on
  manifolds
  explicitly.

  The $k$-Cauchy-Fueter operator over the  $1$-dimensional quaternionic  space $\mathbb{H}$ also has the origin in physics: it is
the elliptic version of {\it spin $  k/ 2$  massless field} operator \cite{CMW} \cite{EPW} \cite{PR1} \cite{PR2} over the
Minkowski space:
    $D_0^{(1)}\phi=0$ corresponds to the Dirac-Weyl equation   whose solutions correspond to neutrinos;
  $D_0^{(2)}\phi=0$ corresponds to  the  Maxwell equation   whose  solutions correspond to
photons;
  $D_0^{(3)}\phi=0$ corresponds to   the Rarita-Schwinger
equation;
   $D_0^{(4)}\phi=0$ corresponds to   linearized Einstein's equation
 whose  solutions correspond to  weak gravitational fields, etc..

Salamon  \cite{Sa86}   constructed another family of  quaternionic  complexes over   quaternionic manifolds:
 \begin{equation}\label{eq:Salamon}\begin{split}
0\longrightarrow \Gamma\left( \odot^{k }{H}^* \right) &
\longrightarrow\Gamma \left(  \Lambda^1 {E}^*\otimes\odot^{k+1
}{H}^*\right)
\longrightarrow\cdots\cdots\longrightarrow\Gamma\left (  \Lambda^p
{E}^*\otimes\odot^{k+p
}{H}^*\right) \\&\longrightarrow \Gamma (
\Lambda^{p+1} {E}^*\otimes\odot^{k+p+1
}{H}^*)  \longrightarrow\cdots\cdots
.\end{split}
\end{equation} The half sequence of  the $k$-Cauchy-Fueter  complex  (\ref{eq:quaternionic-complex-diff}),  beginning with the operator
$D_{k+1}^{(k)}$, is similar to Salamon's complexes. In last two decades, quaternionic manifolds were also studied from the point of view
of parabolic geometry (cf. \cite{CS09} \cite{CS} \cite{CS2} \cite{SS} \cite{CSS} and reference therein). Several interesting
differential complexes over curved quaternionic manifolds have been constructed from   BGG-sequences \cite{CSS01} \cite{CS2}. Recall
that for a parabolic subalgebra $\mathfrak p$ (resp. subgroup $P$)  of a complex semisimple Lie algebra $\mathfrak g$ (resp. group $G$),
let $E(\lambda)$ be the irreducible $\mathfrak p$-module with the lowest weight $-\lambda$. Denote by $\mathscr O_{\mathfrak
p}(\lambda)$ the sheaf of holomorphic sections of vector bundle associated  to $E(\lambda)$ over $G/P$.  A    {\it general
BGG-sequence}
 is an exact sequence
\begin{equation}\label{eq:BGG}0 \longrightarrow E_{\mathfrak g}(\lambda)
 \longrightarrow \mathscr O_{\mathfrak p}(\lambda)   \xrightarrow {\hskip 2mm d_0\hskip 2mm } \bigoplus_{w\in W^{\mathfrak p},
 l(w)=1}\mathscr O_{\mathfrak p}(w.\lambda) \xrightarrow {\hskip 2mm d_1\hskip 2mm } \bigoplus_{w\in W^{\mathfrak p}, l(w)=2}\mathscr
 O_{\mathfrak p}(w.\lambda) \longrightarrow \cdots,
\end{equation}for a dominant weight $\lambda$ of $\mathfrak g$, where   $W^{\mathfrak p}$  is the Hasse diagram associated to $\mathfrak
p$ (cf.  theorem 8.4.1 in  \cite{BE}). $E_{\mathfrak g}(\lambda)$ is a   finite dimensional irreducible representation of $\mathfrak g$. But on the
flat space $\mathbb{H}^n$, the $k$-Cauchy-Fueter complex after complexification is
  a sequence (\ref{eq:BGG}) with the weight $\lambda $ singular for $\mathfrak g$, but dominant  for $\mathfrak p$ (cf.  theorem 11 in
  \cite{Ba}). In this case, $E_{\mathfrak g}(\lambda)$ is an infinite dimensional irreducible representation of $\mathfrak g$. So it is not a
  BGG-sequence. Moreover, $D_k^{(k)}$  is an example of    non-standard  invariant operators (cf. Remark 12 in \cite{BE}). In general,
  it is not easy to construct an exact sequence with singular weights. People usually   construct such a sequence  from  a relative  BGG
  sequence,   case by case, by using the twistor method (cf. e.g. \cite{BE} \cite{PS} \cite{G2} and references therein)  or the method
  of cohomology parabolic induction in the representation theory (cf. section 11.3 of \cite{BE}).
From the point of view of    function theory, we are especially interested in differential complexes   (\ref{eq:BGG}) with singular
weights,  because
 only  in this case   ``regular functions" as elements of  $\ker d_0\cong  E_{\mathfrak g}(\lambda)$  are abundant. On the flat space $\mathbb{H}^n$,
 a generalized Penrose integral formula provides all solutions to
 the   $k$-Cauchy-Fueter equation, which is of infinite dimensional (cf.  \cite{KW}).

The $0$-Cauchy-Fueter operator $D_0^{(0)}:\Gamma (M, \mathbb{C}) \rightarrow  \Gamma (M,
\Lambda^{ 2} {E}^*)$ is   called the {\it Baston operator}. Certain exterior product of this operator gives us the quaternionic
Monge-Amp\`ere
operator by Alesker
\cite{alesker2}. This interpretation together with the second operator $D_1^{(0)}$ allows us to develop pluripotential theory over
$\mathbb{H}^n$ \cite{Wan} \cite{WaK} \cite{WaZ} \cite{WaWang}. To develop pluripotential theory over curved
quaternionic   manifolds, in particular to study the quaternionic  Calabi-Yau problem on quaternionic   manifolds \cite{alesker7}
   \cite{alesker6}, we need to know $0$-Cauchy-Fueter complex on    manifolds explicitly.

Penrose's two-spinor notation is   useful for studying $4$-dimensional manifolds \cite{PR1}  \cite{PR2}. It is generalized to  complex
quaternionic manifolds by Baston \cite{Ba} and  to more general complex paraconformal manifolds by   Bailey   and   Eastwood
\cite{BaiE}. As a real version, we   extend
this
notation to quaternionic   manifolds simply by  realizing the isomorphism
$
 \mathbb{ C}TM\cong E\otimes H
$ in (\ref{eq:identification0}): for local frames $\{e_{A }\}$ and $\{e_{A'}\}$   of $E$¡¡ and $H$, respectively,
 we identify $e_A\otimes e_{A'}$  with a complex tangential vector
$
Z_{AA'}
$ (see section \ref{sub:Realization}).
 The  quaternionic   connection on $M$ induces a $\mathfrak {gl} (2n,\mathbb{ C } )$-connection  on $E$ and a $\mathfrak {su} (2
 )$-connection  on $H$, respectively, and so the curvature of
the quaternionic   connection has two components
\begin{equation*}
  R_{abA}^{\phantom{abA}B}\qquad {\rm and}\qquad  R_{abA'}^{\phantom{abA'}B'},
\end{equation*}
corresponding to  curvatures of the bundles $E$ and $H$, respectively. Here we use indices $A,B$ and $A',B'$ for components of sections
of bundles  $E$  and $
H$, respectively, and indices $a, b$ for components of the local quaternionic  frame of the tangent bundle $TM$. Furthermore,
curvatures of a unimodular quaternionic   connection  have a very special
decomposition (cf. Proposition \ref{prop:curvature}), with the help of which we can check that the sequence
(\ref{eq:quaternionic-complex-diff})
is a complex, i.e.
$
   D_{j+1}^{(k)}\circ   D_j^{(k)}=0,
$ by direct calculation in Section 3.1.  Two-component   notation is a   useful tool for calculation over a    quaternionic   manifold
and everything in this paper is
based on elementary
 calculation by this   notation.
 Unimodular   quaternionic  manifolds constitute a nice class of quaternionic  manifolds, because the
conformal change of
a unimodular   quaternionic structure is still unimodular   quaternionic, while  the conformal change of
a  quaternionic K\"ahler structure is usually not quaternionic K\"ahler (cf. \cite{OP}).  We also give the conformal transformation
formula of these operators
$D_j^{(k)}$  in Section 3.1.

In Section 3.2, we show that the $k$-Cauchy-Fueter complex is elliptic, i.e. its symbol complex is a exact sequence of complex vector
spaces. Write the $k$-Cauchy-Fueter complex  as
\begin{equation}\begin{split}
0\longrightarrow & \Gamma \left( \mathscr V_0^{(k)} \right)
\xrightarrow{D_0^{(k)}} \Gamma \left( \mathscr V_1^{(k)}\right)
\xrightarrow{D_1^{(k)}}\cdots\longrightarrow \Gamma\left ( \mathscr V_{2n-1}^{(k)}\right)
 \longrightarrow
0,\end{split}\label{eq:quaternionic-complex-diff-V}
\end{equation}  where $\mathscr V_j^{(k)}$ is the  $j$-th vector space in the sequence (\ref{eq:quaternionic-complex-diff}). By  the
theory of elliptic operators, we   know the     Hodge-type
decomposition
and that the   $j$-th cohomology group
\begin{equation*}
    H^j_{ (k) }(M)=  \ker D_j^{(k)} / {\rm Im}\, D_{j-1}^{(k)}
\end{equation*}
 of the $k$-Cauchy-Fueter complex  over a compact unimodular   quaternionic   manifold (right conformally flat if
 $\dim_{\mathbb{R}}M=4$) is finite dimensional, and  can be represented by Hodge-type elements.

 In Section 4, we prove a
 Weitzenb\"ock formula  for these complexes over a quaternionic  K\"ahler manifold $M$,  and show a vanishing theorem for the
 cohomologies $H^j_{ (k) }(M)$, $j=1,\ldots,k-1$, if its  scalar curvature
 is negative. The Weizenb\"ock formula and vanishing theorem  for Salamon's complexes over such manifolds with negative scalar
 curvatures were already given by Horan  \cite{Ho}  (see also Homma \cite{Hom} and Nagatomo-Nitta \cite{NN}). The latter one essentially
 gives us  the result for the $k$-Cauchy-Fueter  complex for $j\geq k+3$.

I would like to thank the referee for many valuable suggestions.

 \section{ Unimodular  quaternionic manifolds  and their curvatures}
 \subsection{Realization of
the isomorphism
$
 \mathbb{ C}TM\cong E\otimes H
$}\label{sub:Realization}Denote by $\text{GL} (n,\mathbb{H})$ the group of all invertible quaternionic $(n\times
n)$-matrices. ${\rm {Sp}}(n) :=\{A\in\text{GL}(n,\mathbb{H});\overline{A}^tA=A\overline{A}^t=I_{n\times n}\}$.
$\mathfrak {sl}(n,\mathbb{H}):=\{A\in\mathfrak{gl}(n,\mathbb{H}); {\rm Re }\, (tr(A))=0\}$. We denote by ${\rm {SL}}(n,\mathbb{H})$ the
connected component containing the identity of the Lie group with Lie algebra   $\mathfrak {sl}(n,\mathbb{H})$.

Let $A=(A_{jk})_{p\times m}$ be a quaternionic $(l\times m)$-matrix and write
$
    A_{jk}=a_{jk}^1+\textbf{i}a_{jk}^2+\textbf{j}a_{jk}^3+\textbf{k}a_{jk}^4\in\mathbb{H}.
$
We define $\tau(A)$ to be the complex $(2p\times 2m)$-matrix \begin{equation}\label{tau}\tau(A)=\left(
                                                              \begin{array}{cccc}
                                                                \tau(A_{00}) & \tau(A_{01}) & \cdots & \tau(A_{0(m-1)}) \\
                                                                \tau(A_{10}) & \tau(A_{11}) & \cdots & \tau(A_{1(m-1)}) \\
                                                                \cdots & \cdots & \cdots & \cdots \\
                                                                \tau(A_{(p-1)0}) & \tau(A_{(p-1)1}) & \cdots & \tau(A_{(p-1)(m-1)}) \\
                                                              \end{array}
                                                            \right),
\end{equation}where $\tau(A_{jk})$ is the complex $(2\times2)$-matrix
\begin{equation}\label{2.301}\left(
                                       \begin{array}{cc}
                                         a_{jk}^1+\textbf{i}a_{jk}^2 & -a_{jk}^3-\textbf{i}a_{jk}^4 \\
                                         a_{jk}^3-\textbf{i}a_{jk}^4 & \quad a_{jk}^1-\textbf{i}a_{jk}^2 \\
                                       \end{array}
                                     \right).\end{equation}
                                      This is motivated by the embedding of quaternionic  numbers into $2\times 2$-matrices. The
                                      definition of $\tau$ above and the following proposition
are the conjugate version of those in \cite{WaWang}.

\begin{prop}\label{p2.1} {\rm (proposition 2.1 in \cite{WaWang})} $(1)$ $\tau(AB)=\tau(A)\tau(B)$ for a quaternionic $(p\times
m)$-matrix $A$ and a quaternionic $(m\times l)$-matrix $B$. In particular, for $q'=Aq,$ $q,q' \in\mathbb{H}^n$, $A\in$
$\text{GL}(n,\mathbb{H})$, we have
\begin{equation}\label{eq:Aq}
 \tau(q')=\tau(A)\tau(q)
\end{equation}
 as complex $(2n\times2)$-matrices.\\
$(2)$ For $A\in\text{GL}(n, \mathbb{H})$, we have \begin{equation}\label{2.231}J\overline{\tau(A)}=\tau(A)J, \qquad {\rm where}\qquad
J=\left(
                                       \begin{array}{ccccc}
                                       0 & 1 &   &   &   \\
                                         -1 & 0 &   &   &   \\
                                           &   & 0 & 1 &   \\
                                           &   & -1 & 0 &   \\
                                           &   &   &   & \ddots \\
                                            \end{array}
                                     \right).
\end{equation}
$(3)$  $\tau\left(\overline{A}^t\right)=\overline{\tau(A)}^t$ for a quaternionic $(n\times n)$-matrix $A$. If $A\in{\rm {Sp}}(n)$,
$\tau(A)$ is symplectic, i.e., $\tau(A)J\tau(A)^t=J$.
\end{prop}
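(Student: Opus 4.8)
The plan is to reduce every assertion to a computation at the level of a single quaternion, i.e.\ to an identity among $2\times 2$ complex matrices, and then to propagate it through the block structure of $\tau$ in (\ref{tau}). The guiding observation is that the restriction of $\tau$ to $\mathbb{H}$ in (\ref{2.301}) is the classical embedding of the quaternion algebra into $M_2(\mathbb{C})$ sending $1,\mathbf i,\mathbf j,\mathbf k$ to $I$, $\mathrm{diag}(\sqrt{-1},-\sqrt{-1})$, $\bigl(\begin{smallmatrix}0&-1\\1&0\end{smallmatrix}\bigr)$, $\bigl(\begin{smallmatrix}0&-\sqrt{-1}\\-\sqrt{-1}&0\end{smallmatrix}\bigr)$; in particular $\tau|_{\mathbb H}$ is $\mathbb R$-linear and multiplicative.

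For part $(1)$ I would first record that $\tau(x+y)=\tau(x)+\tau(y)$ and $\tau(xy)=\tau(x)\tau(y)$ for $x,y\in\mathbb H$, the former being immediate from (\ref{2.301}) and the latter from checking that the images of $\mathbf i,\mathbf j,\mathbf k$ satisfy the quaternion relations (equivalently, a direct $2\times2$ multiplication). The passage to matrices is then pure bookkeeping: writing $(AB)_{jk}=\sum_{l}A_{jl}B_{lk}$, the $(j,k)$-block of $\tau(AB)$, namely $\tau\bigl(\sum_l A_{jl}B_{lk}\bigr)$, equals the $(j,k)$-block of $\tau(A)\tau(B)$, namely $\sum_l\tau(A_{jl})\tau(B_{lk})$, by additivity and multiplicativity on $\mathbb H$. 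The special case (\ref{eq:Aq}) is just this with $B=q$ an $n\times1$ column.

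For parts $(2)$ and $(3)$ I would again verify the single-block identities and lift them. Writing $\epsilon=\bigl(\begin{smallmatrix}0&1\\-1&0\end{smallmatrix}\bigr)$ for the diagonal block of $J$, a direct computation on (\ref{2.301}) gives $\epsilon\,\overline{\tau(x)}=\tau(x)\,\epsilon$ and $\tau(\overline x)=\overline{\tau(x)}^{\,t}$ for every $x\in\mathbb H$. Since $J$ is block-diagonal with blocks $\epsilon$, comparing $(j,k)$-blocks yields $J\overline{\tau(A)}=\tau(A)J$ at once, which is $(2)$; while the identity $(\overline A^{\,t})_{jk}=\overline{A_{kj}}$ together with the block rule $(\overline{\tau(A)}^{\,t})_{jk}=\overline{\tau(A_{kj})}^{\,t}$ gives $\tau(\overline A^{\,t})=\overline{\tau(A)}^{\,t}$, the first assertion of $(3)$.

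Finally, for $A\in{\rm Sp}(n)$ I would combine these ingredients: from $A\overline A^{\,t}=I$, part $(1)$, and the conjugate-transpose relation one gets $\tau(A)\overline{\tau(A)}^{\,t}=\tau(A\overline A^{\,t})=\tau(I)=I$, so $\tau(A)$ is unitary. Then, using part $(2)$,
\[
\tau(A)\,J\,\tau(A)^t=J\,\overline{\tau(A)}\,\tau(A)^t=J\,\overline{\tau(A)\,\overline{\tau(A)}^{\,t}}=J,
\]
where the middle equality uses $\overline{\overline{\tau(A)}^{\,t}}=\tau(A)^t$ and the last uses the unitarity just established. I do not anticipate a genuine obstacle; the one point requiring care is the bookkeeping in the block structure — in particular that block transposition swaps the block indices and commutes with entrywise conjugation — so that the single-quaternion identities propagate correctly to the $(2p\times 2m)$ matrices.
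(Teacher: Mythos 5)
Your proof is correct. The paper itself gives no proof of this proposition --- it is imported as the conjugate version of proposition 2.1 in \cite{WaWang} --- and your argument (reduction to the single-quaternion identities $\epsilon\,\overline{\tau(x)}=\tau(x)\,\epsilon$, $\tau(\overline{x})=\overline{\tau(x)}^{\,t}$ and multiplicativity of $\tau|_{\mathbb{H}}$, followed by block bookkeeping, with the ${\rm Sp}(n)$ case deduced from parts $(1)$--$(3)$ via $\tau(A)\overline{\tau(A)}^{\,t}=\tau(A\overline{A}^{\,t})=I$) is exactly the standard direct verification that such a proof consists of.
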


  Proposition \ref{p2.1} (3) implies $\tau({\rm {Sp}}(n))\subset{\rm {SU}}(2n )$. (\ref{tau})-(\ref{2.301}) implies $\tau( \mathfrak
  {sl}(n,\mathbb{H}))\subset\mathfrak {sl}(2n,\mathbb{C})$, and so
 $\tau({\rm {SL}}(n,\mathbb{H}))\subset{\rm {SL}}(2n,\mathbb{C})$. Given the standard  volume form on $\mathbb{R}^{4n}$, ${\rm
 {SL}}(n,\mathbb{H})$ is the group consisting of  elements of $
{\rm {GL}}(n,\mathbb{H})$ which induce  transformations of $\mathbb{R}^{4n}$ preserving  this volume form.
Let  $I_{1},I_{2},I_{3}$  be the induced action of $\mathbf{i},\mathbf{j},\mathbf{k}$ on the frame bundle. Then
  we can choose a    frame  of the tangent bundle
\begin{equation}\label{eq:admissible}
   (X_1,X_1 I_{1},X_1 I_{2},X_1 I_{3}, \ldots,  X_{4l+1},X_{4l+1}I_{1},X_{4l+1}I_{2},X_{4l+1}I_{3}, \ldots)
\end{equation}
  called a {\it local quaternionic  frame}. Label this frame  as $(X_1,\ldots, X_{4n})$ .

  $ H \otimes E$ is isomorphic to the tangent bundle $\mathbb{C}T  M $ as follows.
It follows from  Proposition \ref{p2.1} that
$\mathbb{C}^{2n }$ is a $\text{GL}(n,\mathbb{H})$-module with $A\in \text{GL}(n,\mathbb{H})$ acting on $\mathbb{C}^{2n }$ by $\tau(A)$,
and $\mathbb{C}^{2  }$ is a ${\rm {Sp}}(1)$-module with $q\in {\rm {Sp}}(1)$ acting on $(z_1,z_2)\in \mathbb{C}^{2  }$ by right
multiplying   the $2\times2$-matrix $\tau(q)$.
Let $\{v_A\}_{A=0}^{2n-1}$, $\{v_{A'}\}_{A'=0',1'}$ and $\{w_a\}_{a=1}^{4n}$ be the standard bases of $\mathbb{C}^{2n }$, $\mathbb{C}^{2
}$ and $\mathbb{R}^{4n }$, respectively.  Write $v_{AA'}:=v_A \otimes v_{A'}$ in $\mathbb{C}^{2n }\otimes\mathbb{C}^{2  }$. The map
$\tau$ provides an isomorphism from $\mathbb{C}\otimes\mathbb{H}^{ n }$ to $\mathbb{C}^{2n }\otimes\mathbb{C}^{2  }$ as ${\rm
{GL}}(n,\mathbb{H})\times{\rm {Sp}}(1)$-module. Under this identification of $\tau$, we have
\begin{equation*}\begin{split}
  2 w_{4l+1 }&=\hskip 3mm v_{(2l)0'}+ v_{(2l+1)1'},\qquad  2 w_{4l+2}=-\mathbf{i}v_{(2l)0'}+\mathbf{i} v_{(2l+1)1'},\\2
  w_{4l+3}&=-v_{(2l)1'}+ v_{(2l+1)0'},\qquad\, 2w_{4l+4}=\hskip 3mm \mathbf{i}v_{(2l)1'}+ \mathbf{i} v_{(2l+1)0'} ,
\end{split}\end{equation*}by using definition (\ref{tau})-(\ref{2.301}) of $\tau$ for $m=1$. Thus
\begin{equation}\label{eq:identification}
 \left(\begin{array}{ll} v_{(2l)0'}& v_{(2l)1'} \\v_{(2l+1)0'}&v_{(2l+1)1'}
                                         \end{array}
                                    \right)=\left(\begin{array}{rr}  w_{4l+1 }+  \mathbf{i} w_{4l+2} & -  w_{4l+3}-\mathbf{i}
                                    w_{4l+4}\\
                                      w_{4l+3}-\mathbf{i}  w_{4l+4} &  w_{4l+1 }- \mathbf{i} w_{4l+2}  \end{array}
                                    \right).
\end{equation}
Now for a local quaternionic  frame $\mathfrak e=(X_1,\ldots, X_{4n})$, define local sections
\begin{equation}\label{eq:frme}e_A:=(\mathfrak e, v_A),\qquad e_{A'}:=(\mathfrak e, v_{A'}),\qquad X_a:=(\mathfrak e, w_a)
\end{equation} of $E$, $H$ and $TM$ in (\ref{eq:associated}), respectively.
 Then (\ref{eq:identification}) implies that $Z_{AA'}=\frac 1{\sqrt 2} e_A\otimes e_{A'}$ are given by
\begin{equation}\label{eq:k-CF} \left(\begin{array}{cc}Z_{00'}&Z_{01'} \\
                                         \vdots& \vdots\\Z_{(2n-1)0'}&Z_{(2n-1)1'}\end{array}
                                    \right):=  \frac 1{\sqrt 2}\left(
                                      \begin{array}{ll}
                                     X_{1} +\textbf{i}X_{2}  & - X_{3} -\textbf{i}X_{4}  \\
                                       X_{3}-\textbf{i}X_{4}  &\hskip 3mmX_{1}-\textbf{i}X_{2}  \\
                                         \qquad\vdots&\qquad\vdots\\
                                      X_{4n-3} +\textbf{i}X_{4n-2}  & -X_{4n-1}-\textbf{i}X_{4n }  \\
                                       X_{4n-1} -\textbf{i}X_{4n }  &\hskip 3mm X_{4n-3} -\textbf{i}X_{4n-2}  \\
                                      \end{array}
                                    \right),\end{equation} See (\ref{eq:gab}) for the reason to choose factor $\sqrt 2$ here. This frame
                                    over the flat quaternionic space $\mathbb{H}^n$ plays an important role in the investigation  of
                                    quaternionic analysis  \cite{KW} \cite{Wan}-\cite{WR} \cite{Wa10}.

                                    Let $\{\omega^i\}$ be the coframe dual to $\{X_j\}$ and let $\{e^{AA'}\}$ be complex $1$-forms dual
                                    to the two-component   local quaternionic  frame $\{ Z_{A  A'}\}$ in (\ref{eq:k-CF}),
i.e. $ e^{AA'}(Z_{BB'})=\delta_{ B}^A\delta^{A'}_{B'}$.  It is obvious that
\begin{equation*}\begin{split}
  & \sqrt 2 e^{00'}= \omega^1-\mathbf{i}\omega^2 ,\qquad\sqrt 2 e^{01'}= -\omega^3+\mathbf{i}\omega^4 , \qquad
 \sqrt 2  e^{10'}=  \omega^3+\mathbf{i}\omega^4 ,\qquad \sqrt 2 e^{11'}= \omega^1+\mathbf{i}\omega^2 ,\cdots,
\end{split}\end{equation*} by the expression of $\{ Z_{A  A'}\}$ in (\ref{eq:k-CF}), and so $(e^{00'} \wedge  e^{11'})\wedge
  ( e^{10'} \wedge e^{01'} ) = -\omega^1\wedge\omega^2\wedge\omega^3\wedge\omega^4$. Consequently,
   \begin{equation}\label{eq:vol}vol:= (-1)^n\omega^1\wedge\cdots\wedge\omega^{4n}=
    \left ( \wedge_{A=0}^{2n-1} e^{A0'}\right)\wedge\left ( \wedge_{B=0}^{2n-1} e^{B1'}\right).
   \end{equation}
A local quaternionic  frame $\{X_1,\ldots, X_{4n}\}$ is called a {\it local unimodular quaternionic  frame} if the volume form of the
manifold is locally given by $vol$ in (\ref{eq:vol}). Note that a local quaternionic  frame becomes   unimodular simply by multiplying a
suitable factor.

  \subsection{The two-component   notation}
 Denote by $e^{A'}:=(\mathfrak e, v^{A'})$ a local section of the dual bundle
 $H^*$, where  $v^{A'}$ is the dual of $v_{A'}$ in $\mathbb{C}^{2  }$. It is similar to define $E^*$ and $e^{A  }$. Consider
  \begin{equation}\label{eq:varepsilon}
     \varepsilon_{A'B'}e^{A'}\otimes e^{B'}
  \end{equation}
  where $\varepsilon_{A'B'}  $ is antisymmetric with $ \varepsilon_{0'1'}=1$. Here and in the following, we use the Einstein's
  convention of summation over repeated indices. It  is a section of the line bundle $\Lambda^2
  H^* $ (a symplectic form on $H$ pointwisely).  This is because (\ref{eq:varepsilon}) is invariant under the action of ${\rm {Sp}}(1 )$
  by Proposition \ref{p2.1} (3). So they can be glued to be a global section.
When the manifold is  unimodular    quaternionic,  consider
 \begin{equation}\label{eq:epsilon-unprimed}
    \epsilon_{A_1\ldots A_{2n}  }e^{A_1}\otimes\ldots \otimes e^{A_{2n}}
  \end{equation}
  where $\epsilon_{A_1\ldots A_{2n}  }$ is the sign of the permutation from $ A_1,\ldots, A_{2n} $ to $1,\ldots, 2n$. It is a global
  section  of the line bundle
  $\Lambda^{2n}  E^*$,  because (\ref{eq:epsilon-unprimed}) is invariant under the action of ${\rm {SL}}(2n, \mathbb{C} )$.

A section $  f$ of $\mathfrak T^{ l}_{q,p}:=  (\otimes^l H)\otimes ( \otimes^q E^*)\otimes(\otimes^p H^*)
$
can be written as
 \begin{equation}\label{eq:section}
   f=    f^{ A_1'\ldots A_l'}_{B_1\ldots B_qB_1'\ldots B_p'}  e_{A_1'}\otimes \cdots \otimes e_{A_l'}\otimes e^{B_1}\otimes\cdots
   \otimes
 e^{B_q}\otimes  e^{B_1'}\otimes \cdots \otimes  e^{B_p'} .
 \end{equation}
 We can identify this section  with the tuple of functions
 \begin{equation}\label{eq:section-f}
 \left   (\cdots,   f^{ A_1'\ldots A_l'}_{B_1\ldots B_qB_1'\ldots B_p'},\cdots\right) .
 \end{equation}

A  {\it contraction}  is a map
$
  C:\mathfrak T^{ l+1}_{q,p+1}\longrightarrow \mathfrak T^{ l}_{q,p}
$
given by
$
  (C  f)^{ A_{ 1}'\ldots A_{l }'}_{B_1\ldots B_qB_{1 }'\ldots B_{p }'}:=
   f^{ A_{ 1}'\ldots D' \ldots A_l'}_{B_1\ldots B_qB_{ 1}'\ldots D' \ldots
  B_p'},
$
where the superscript and subscript  $D'$ appear in $j$-th and $\widehat{j} $-th places, respectively. It is a well defined element of
$\mathfrak T^{ l}_{q,p}$ since $e^{D'}\otimes e_{D'}=\widetilde{e}^{E'}\otimes \widetilde{e}_{E'}$ under the transformation $
\widetilde{e}^{E'}=(M^{-1})_{D'}^{{\phantom {A'} } E' }e^{D'} $, $\widetilde{e}_{E'}=M_{E'}^{{\phantom {A'} }D' }e_{D'} $ for
$(M_{E'}^{{\phantom {A'} }D' })\in {\rm {Sp}}(1)$.
We use $\varepsilon_{A'B'}$ to raise or lower primed indices. For example,
\begin{equation*}
    f_{\ldots\phantom{A'}\ldots}^{\phantom{\ldots}A'}=  f_{\ldots{B'}\ldots}\varepsilon^{B'A'},\qquad
    f_{\ldots\phantom{A'}\ldots}^{\phantom{\ldots}A'}\varepsilon_{A'C'}=  f_{\ldots{C'}\ldots},
\end{equation*}
where $(\varepsilon^{A'B'})$ is the inverse of $(\varepsilon_{A'B'})$, i.e.,
$
   \varepsilon_{A'B'}\varepsilon^{B'C'}=\delta_{A'}^{ C'}=\varepsilon^{C'B'}\varepsilon_{B'A'}.
$ So it is the same after raising and    lowering primed indices.
  $\varepsilon$ has the standard form locally:
 \begin{equation}  (\varepsilon_{A'B'})=\left( \begin{array}{cc} 0&
 1\\-1& 0\end{array}\right),\qquad  (\varepsilon^{A'B'}) =\left( \begin{array}{cc} 0&
- 1\\1& 0\end{array}\right). \label{eq:epsilon}
 \end{equation}
On a unimodular
  quaternionic manifold, we can not use $\epsilon$ to   raise or lower unprimed indices. This is why we only consider tensors as
  sections of  $\mathfrak T^{ l}_{q,p}$. But on quaternionic K\"ahler  manifold, we can use $\epsilon_{AB}$ to   raise or lower unprimed
  indices (cf. Section 4).

Recall that
 a {\it covariant derivative} of a vector bundle $V$¡¡ is  a mapping $\nabla: \Gamma(V)\longrightarrow \Gamma((TM)^*\otimes V)$
 satisfying
$
    \nabla (fv) =df\otimes v+f \nabla  v,
    \nabla(v_1+v_2) =\nabla v_1+ \nabla v_2,
$
for any $v,v_1,v_2\in \Gamma(V)$ and scalar function $f$. $\nabla$ acts on $V^*$ naturally by duality:
$
   X(v,v^*)=(\nabla_X v,v^*)+(v,\nabla_X v^*)
$ for any vector field $X\in TM$, $v \in \Gamma(V)$ and $v^* \in \Gamma(V^*)$.
A covariant derivative can be naturally extended to a map
$
 \nabla:  \Gamma((\otimes^k V)\otimes (\otimes^l V^*))\longrightarrow \Gamma((TM)^*\otimes(\otimes^k V)\otimes(\otimes^l V^*))
$.

The quaternionic  connection induces an $\mathfrak {gl} (2n,\mathbb{C} ) $-connection $\omega'$ on $E$ and an  $\mathfrak {su} (2)
$-connection $\omega''$ on $H$.
When the manifold is  unimodular  or quaternionic K\"ahler,
$\omega' $     is $\mathfrak {sl} (2n,\mathbb{C} ) $- or $\mathfrak {sp} (n) $-valued.
  $\nabla$ is naturally extended to   well defined mappings   $E\rightarrow (\mathbb{C}TM)^*\otimes E$  and $H\rightarrow
  (\mathbb{C}TM)^*\otimes H$
    by
$
       \nabla_{X+\mathbf{i}Y}:=\nabla_{X }+\mathbf{i}\nabla_{Y}
$,
  which induce well defined mappings   $E^*\rightarrow (\mathbb{C}TM)^*\otimes E^*$  and $H^*\rightarrow (\mathbb{C}TM)^*\otimes H^*$
  by duality, and so we get a well
  defined mapping  $\mathfrak T^{ l}_{q,p}\rightarrow(\mathbb{C}TM)^*\otimes\mathfrak T^{ l}_{q,p}$.

Choose a  local quaternionic frame $\mathfrak e=\{e_{ a}:=X_a  \}_{a=1}^{4n}$ of $TM$ and its dual $\{e^a\}_{a=1}^{4n}$. Write
\begin{equation*}\label{eq:connection-E-H}
  \nabla  e_{A}=\Gamma_{aA}^{\phantom{aA}B}e^a\otimes e_B, \qquad \nabla  e_{A'}=\Gamma_{aA'}^{\phantom{aA'}B'}e^a\otimes e_{B'},
 \end{equation*}
 where $\Gamma_{aA}^{\phantom{aA}B}=\omega' (X_a)_A^{\phantom{ A}B}$ and $\Gamma_{aA'}^{\phantom{aA'}B'}=\omega''(X_a)_{A'}^{\phantom{
 A'}B'}$ are connection coefficients.
  Then by duality, we have
 $
   \nabla  e^A=-\Gamma_{aB}^{\phantom{aB}A}e^a\otimes e^B,$ $ \nabla  e^{A'}=-\Gamma_{aB'}^{\phantom{aB'}A'}e^a\otimes e^{B'},
$ which are equivalent to
 \begin{equation*}
    \nabla_a   f_A=X_a  f_A-\Gamma_{aA  }^{\phantom{aB }D}   f_D, \qquad  \nabla_a   f_{A'}=X_a  f_{A'}-\Gamma_{aA ' }^{\phantom{aB'
    }D'}   f_{D'}.
 \end{equation*}
In general,  $\nabla   f$ for $  f$ given by (\ref{eq:section})-(\ref{eq:section-f}) is    the tuple
 \begin{equation*}
 \left (\cdots,\nabla_a   f^{ A_1'\ldots A_l'}_{B_1\ldots B_qB_1'\ldots B_p'} ,\cdots\right),
 \end{equation*}as a section of
$
   (\mathbb{C}TM)^*\otimes  \mathfrak T^{ l}_{q,p}\cong   \mathfrak T^{ l}_{q+1,p+1},
$ by the identification (\ref{eq:identification0}),
 where
  \begin{equation}\label{eq:covariant-derivatives}\begin{split}
\nabla_a   f^{ A_1'\ldots A_l'}_{B_1\ldots B_qB_1'\ldots B_p'}:=X_a  f^{ A_1'\ldots A_l'}_{B_1\ldots B_qB_1'\ldots B_p'}&
+\Gamma_{aD'}^{\phantom{aD'}A_j'}  f^{  \ldots D'\ldots  }_{B_1\ldots B_qB_1'\ldots B_p'}\\&
-\Gamma_{aB_j }^{\phantom{aB_j}D}  f^{ A_1'\ldots A_l'}_{ \ldots D\ldots  B_1'\ldots B_p'} -\Gamma_{aB_j' }^{\phantom{aB_j'}D'}  f^{
A_1'\ldots
A_l'}_{B_1\ldots B_q \ldots D'\ldots  }.
\end{split} \end{equation}

 The  covariant derivative is invariant after  contraction:  $\nabla (C  f)=C(\nabla    f)$,
  because by (\ref{eq:covariant-derivatives}), we have
\begin{equation*}
   [C( \nabla_a    f)-\nabla_a (C  f)]^{ A_{ 1}'\ldots A_{l }'}_{B_1\ldots B_qB_{1 }'\ldots B_{p }'}=\Gamma_{aE'}^{\phantom{aD'}D'}  f^{
   A_{ 1}'\ldots E' \ldots A_l'}_{B_1\ldots B_qB_{ 1}'\ldots D' \ldots
  B_p'}-\Gamma_{a D'}^{\phantom{aD'}E'}  f^{ A_{ 1}'\ldots D' \ldots A_l'}_{B_1\ldots B_qB_{ 1}'\ldots E'\ldots
  B_p'}=0.
\end{equation*}
We will use the notation
\begin{equation*}
   \nabla_{AA'}:= \nabla_{Z_{AA'}} = \frac 1{\sqrt 2}(\nabla_{X_a}+\mathbf{i}\nabla_{X_b})
\end{equation*}
if we write $Z_{AA'} =  \frac 1{\sqrt 2}({X}_a+\mathbf{i} X_b)$ for some $a,b$ (cf. (\ref{eq:k-CF})). We also write $\nabla_{AA'}$ as
$\nabla_{A'A}$ when it is more convenient. Then \begin{equation}\label{eq:connection-E-H-AA'}
    \nabla_{AA'}e_B=\Gamma_{AA' B}^{\phantom{AA'C} C}e_C,\qquad \nabla_{AA'}e_{ B'}=\Gamma_{AA'B'}^{\phantom{AA'C'} C'}e_{ C'},
 \end{equation}
where $\Gamma_{AA' B}^{\phantom{AA'C}C}= \frac 1{\sqrt 2}(\Gamma_{aB}^{\phantom{aC}C }+\mathbf{i}\Gamma_{b B}^{\phantom{aC}C})$ and
$\Gamma_{AA' B '}^{\phantom{AA'C'} C'}= \frac 1{\sqrt 2}(\Gamma_{aB'}^{\phantom{aC'}C '}+\mathbf{i}\Gamma_{bB'}^{\phantom{aC'}C '})$.
The  formula   (\ref{eq:covariant-derivatives}) holds for $a= {AA'}$.
  Denote
\begin{equation*}2\nabla_{[a} \nabla_{b]}:=
   \nabla_a \nabla_b- \nabla_b\nabla_a.
\end{equation*}
The {\it torsion} is defined as
$
   2\nabla_{[a} \nabla_{b]}\phi=T_{ab}^{\phantom{ab}c} \nabla_c \phi
$
for any scalar function $\phi$. Then by definition
$
   T_{ab}^{\phantom{ab}c} =\Gamma_{ a b}^{\phantom{ab}c}-  \Gamma_{b a}^{\phantom{ab}c} +C_{ab}^{\phantom{ab}c}
$ where the numbers $C_{ab}^{\phantom{ab}c}$ are given  by $[X_a,X_b]=C_{ab}^{\phantom{ab}c}X_c$.
It is direct to check that
\begin{equation*}
   (\nabla_a \nabla_b- \nabla_b\nabla_a)(\phi f_A)=T_{ab}^{\phantom{ab}c} \nabla_c \phi  f_A+\phi(\nabla_a \nabla_b- \nabla_b\nabla_a)
   f_A
\end{equation*} for a scalar function $\phi$, by the formula (\ref{eq:covariant-derivatives}) for covariant derivatives. So when the
connection is torsion-free,
$ 2\nabla_{[a}
\nabla_{b]}  $ is an endomorphism of $ \Gamma(  E^*)$ as a $C^\infty(M)$-module for fixed $a,b$ (similarly for $\Gamma(  H^*)$). So we
can write
\begin{equation*}\begin{split}
   2\nabla_{[a} \nabla_{b]}  f_A&:=-R_{ab A }^{\phantom{ab D  }   D}  f_D,
\qquad
  2\nabla_{[a} \nabla_{b]}  f_{A'}: =-R_{ab A'  }^{\phantom{ab D'  }   D'}  f_{D'}.
\end{split} \end{equation*}
By (\ref{eq:covariant-derivatives}), we see that
\begin{equation*}\begin{split}
   \nabla_{[a} \nabla_{b]}(  f_{\mathscr A}  h_{\mathscr B})=\nabla_{[a} \nabla_{b]}   f_{\mathscr A}\cdot   h_{\mathscr B} +
   f_{\mathscr A}\cdot
   \nabla_{[a} \nabla_{b]}  h_{\mathscr B}  .
\end{split} \end{equation*}
In general, we have the {\it generalized Ricci identity}:
 \begin{equation}\label{eq:Ricci}\begin{split}
  2\nabla_{[a} \nabla_{b]}  f^{ A_1'\ldots A_l'}_{B_1\ldots B_qB_1'\ldots B_p'}:=  R_{abD'}^{\phantom{abD'}A_j'}  f^{  \ldots D'\ldots
  }_{B_1\ldots B_qB_1'\ldots B_p'}&
-R_{abB_j }^{\phantom{abB_j}D}  f^{ A_1'\ldots A_l'}_{ \ldots D\ldots  B_1'\ldots B_p'}  -R_{abB_j' }^{\phantom{abB_j'}D'}  f^{
A_1'\ldots
A_l'}_{B_1\ldots B_q \ldots D'\ldots  }.
\end{split}
 \end{equation}
See Penrose-Rindler  \cite{PR1} \cite{PR2} or  Bailey-Eastwood  \cite{BaiE}.

  If the manifold is unimodular
  quaternionic, the connection on $E$ preserves the $2n$-form $\epsilon$   (\ref{eq:epsilon-unprimed}), i.e.
\begin{equation}\label{eq:volume-preserve-unprimed}
   \nabla_a \epsilon_{A_1\ldots A_{2n}}=
   \Gamma_{aA_j}^{\phantom{aA_j }D}\epsilon_{A_1\ldots D \ldots A_{2n}  } = \Gamma_{aD}^{\phantom{aA }D}\epsilon_{A_1\ldots   A_{2n}  }
   =0
\end{equation} where $ \epsilon_{A_1 \ldots    A_{2n}  }$ is nonzero only if $(A_1\ldots   A_{2n}) $ is a permutation of
$(0,\ldots , 2n-1)$ and  $ (\Gamma_{aA}^{\phantom{aA }D}) $ is   $\mathfrak {sl} (2n, \mathbb{C} )  $-valued.
Similarly, the symplectic form  $\varepsilon_{A'B'}$   on $H$  is also preserved by the connection on $H$, i.e.
\begin{equation}\label{eq:preserve}
   \nabla_a    \varepsilon_{A'B'}=0.
\end{equation}Thus when the manifold is  quaternionic K\"ahler,
  $\nabla$ is a connection of  $\mathbb{C} TM=H\otimes E$ compatible with metric
\begin{equation}\label{eq:gab}
   g_{ab}=g(Z_{AA'},Z_{BB'})= \varepsilon_{A B }\varepsilon_{A'B'} ,\qquad\qquad  a=AA',\quad b=BB',
\end{equation} for   $Z_{AA'}$ in (\ref{eq:k-CF}) if we choose  local quaternionic  frame $\{X_a\}$ in (\ref{eq:k-CF}) orthonormal.

  The notion of unimodular
  quaternionic structure is a real version of the   notion of  torsion-free QCF-structure on a complex    manifold  introduced  by
  Bailey   and    Eastwood  \cite{BaiE}. A {\it quaternionic conformal structure} (briefly {\it QCF}) on a $4n$-dimensional
complex quaternionic manifold $\widetilde{M}$ is given by an isomorphism between  $T\widetilde{M}$ and $ \widetilde{E }\otimes
\widetilde{H}$ and a fixed isomorphism between $\Lambda^{2n}\widetilde{E}^*$ and
$\Lambda^{2 }\widetilde{H}^*$.
Given a  symplectic form $\widetilde{\varepsilon}$ in $\Lambda^{2 }\widetilde{H}^*$, the isomorphism induces
a $2n$-form $\widetilde{\epsilon}$ in $\Lambda^{2n}\widetilde{E}^*$,  and there exists a unique
connection $\nabla$ preserving
$
    \widetilde{\epsilon}   $ and $ \widetilde{\varepsilon}
$ (cf. theorem 2.4 in  Bailey-Eastwood  \cite{BaiE}).
The QCF-structure is called {\it torsion-free} if the induced connection on the holomorphic tangent  bundle is torsion-free.

  The curvature of the complexified tangent bundle is
 \begin{equation*}
    R_{ AA'B B' C C' }^{\phantom{AA'B B' C C'}    DD'}f^{CC'} =(\nabla_{AA'} \nabla_{BB'}-\nabla_{BB'}\nabla_{AA'})f^{D  D'}
 \end{equation*} if we identify $(f^{D  D'})$ with a local section of $\mathbb{C}TM$.
 By the   generalized Ricci identity, we see that
the curvature has the decomposition:
 \begin{equation}\label{eq:decomposition}
    R_{ AA'B B' C C' }^{\phantom {AA'B B' C C'}    DD'}= R_{ AA'B B' C   }^{\phantom { AA'B B' C }  D }\delta_{C'}^{\phantom {C'}D'}+R_{
    AA'B B' C'
    }^{\phantom {AA'B B' C'  }    D' }\delta_{C }^{\phantom {C}D }.
 \end{equation} We will use the following notations: for $a=AA', b=BB'$,
 \begin{equation*}\begin{split}
     R_{A'B'AB C   }^{\phantom{AA'B B' C  }     D } :=R_{ AA'B B' C   }^{\phantom{AA'B B' C  }     D }=R_{ ab C   }^{\phantom{ab C  }
     D }, \qquad R_{ A B
     A' B'   C' }^{\phantom{A B A' B'  C'}     D'}   :
  =    R_{ AA'B B'   C' }^{\phantom{AA'B B'  C'}     D'}=  R_{ ab   C' }^{\phantom{ab  C'}     D'}.\end{split}
 \end{equation*}
\begin{cor}  On a unimodular
  quaternionic manifold, we have
   \begin{equation}\label{eq:vanishing-trace}
  R_{A'B' AB C    }^{\phantom { AA'B B' C}     C  }=0,\qquad   R_{ A B A'B' C'   }^{\phantom {  AA'B B' C' }   C' }=0.
 \end{equation}
\end{cor}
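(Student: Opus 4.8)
The plan is to read both identities as the statement that the traces of the induced curvatures on $E$ and on $H$ vanish, which is forced by the fact that the unimodular quaternionic connection preserves the invariant forms $\epsilon\in\Gamma(\Lambda^{2n}E^*)$ and $\varepsilon\in\Gamma(\Lambda^2 H^*)$. Concretely, the connection on $E$ is $\mathfrak{sl}(2n,\mathbb{C})$-valued and the connection on $H$ is $\mathfrak{su}(2)$-valued, so their curvatures take values in the same trace-free Lie algebras; I would transcribe this into the two-component notation by feeding the two parallel forms into the generalized Ricci identity (\ref{eq:Ricci}).

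For the first identity I would apply (\ref{eq:Ricci}) to $\epsilon_{A_1\ldots A_{2n}}$, which carries only lower unprimed indices, so only the $E$-curvature terms survive:
\[
2\nabla_{[a}\nabla_{b]}\epsilon_{A_1\ldots A_{2n}}=-\sum_{j=1}^{2n}R_{abA_j}^{\phantom{abA_j}D}\epsilon_{A_1\ldots D\ldots A_{2n}}.
\]
Since $\nabla_a\epsilon_{A_1\ldots A_{2n}}=0$ by (\ref{eq:volume-preserve-unprimed}), the left-hand side is zero. Specializing $(A_1,\ldots,A_{2n})=(0,\ldots,2n-1)$ and using the very same contraction already performed in (\ref{eq:volume-preserve-unprimed})—namely that in the sum over $D$ only the diagonal term $D=A_j$ contributes—the right-hand side collapses to $R_{abD}^{\phantom{abD}D}\epsilon_{A_1\ldots A_{2n}}$. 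Hence $R_{abD}^{\phantom{abD}D}=0$, which is the first identity $R_{A'B'ABC}^{\phantom{A'B'ABC}C}=0$ once we set $a=AA',\ b=BB'$.

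For the second identity I would run the analogous computation on $\varepsilon_{A'B'}$, which carries only lower primed indices, so only the $H$-curvature terms survive. Using $\nabla_a\varepsilon_{A'B'}=0$ from (\ref{eq:preserve}) gives
\[
R_{abA'}^{\phantom{abA'}D'}\varepsilon_{D'B'}+R_{abB'}^{\phantom{abB'}D'}\varepsilon_{A'D'}=0,
\]
which says that the lowered curvature $R_{abA'B'}:=R_{abA'}^{\phantom{abA'}D'}\varepsilon_{D'B'}$ is symmetric in $A',B'$. Contracting this symmetric object with the antisymmetric $\varepsilon^{A'B'}$ forces $\varepsilon^{A'B'}R_{abA'B'}=0$; since $\varepsilon^{A'B'}\varepsilon_{D'B'}=-\delta_{D'}^{A'}$ by the conventions in (\ref{eq:epsilon}), this reads $-R_{abC'}^{\phantom{abC'}C'}=0$, i.e. $R_{ABA'B'C'}^{\phantom{ABA'B'C'}C'}=0$.

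I do not expect a genuine obstacle here: everything reduces to the trace-free nature of $\mathfrak{sl}(2n,\mathbb{C})$ and $\mathfrak{su}(2)$, and the generalized Ricci identity merely records this in the index calculus. The only points demanding care are the index bookkeeping in the raising and lowering by $\varepsilon$ (in particular the sign in $\varepsilon^{A'B'}\varepsilon_{D'B'}=-\delta_{D'}^{A'}$) and the verification that only the diagonal term survives in the $\epsilon$-contraction; the latter is identical to the computation already carried out in (\ref{eq:volume-preserve-unprimed}), so it introduces nothing new.
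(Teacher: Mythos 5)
Your proof is correct and takes essentially the same approach as the paper: both identities follow from applying the generalized Ricci identity to the parallel forms $\epsilon_{A_1\ldots A_{2n}}$ and $\varepsilon_{A'B'}$ (using (\ref{eq:volume-preserve-unprimed}) and (\ref{eq:preserve})) and then extracting the trace. The only cosmetic difference is in the primed identity, where you conclude by contracting the symmetric lowered curvature $R_{abA'B'}$ against the antisymmetric $\varepsilon^{A'B'}$, whereas the paper simply repeats the permutation-sum evaluation it used for $\epsilon$; the two finishes are equivalent.
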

\begin{proof} Note that
$(\nabla_{AA'} \nabla_{BB'}-\nabla_{BB'}\nabla_{AA'})\epsilon_{A_1\ldots  A_{2n}  }=0$ by (\ref{eq:volume-preserve-unprimed}), which
implies that
\begin{equation*}
    R_{A'B'AB A_j  }^{\phantom{AA'B B' C_j  }   D}\epsilon_{A_1\ldots D \ldots A_{2n}  }=0
\end{equation*}
by the   generalized Ricci identity again. Noting that $ \epsilon_{A_1,\ldots ,  A_{2n}  }$ is nonzero only if $(A_1\ldots   A_{2n}) $
is a permutation of
$(0,\ldots , 2n-1)$,  we find the first trace
    vanishing. It is similar
for the second one.
\end{proof}

We will use  symmetrisation and antisymmetrisation of indices
\begin{equation*}\begin{split}
     f_{\cdots (A_1\ldots A_k)\cdots}: =\frac 1 {k!}\sum_{\sigma\in S_k}  f_{\cdots  A_{\sigma(1)}\ldots A_{\sigma(k)} \cdots},\qquad
     f_{\cdots [A_1\ldots A_k]\cdots}: =\frac 1 {k!}\sum_{\sigma\in S_k}{\rm sgn} (\sigma)  f_{\cdots  A_{\sigma(1)}\ldots A_{\sigma(k)}
     \cdots},
\end{split}\end{equation*}
where ${\rm sgn} (\sigma)$ is the sign of the permutation $\sigma$. It is obvious that
\begin{equation}\label{eq:re-antisym}
    f_{\cdots [A_1\ldots A_k]\cdots}=  f_{\cdots [[A_1\ldots A_p]\ldots A_k]\cdots},
\end{equation} by definition of antisymmetrisation.
We will also us the notation
\begin{equation*}\begin{split}
       f_{\cdots [A_1\ldots|\mathscr{A}|\ldots A_k]\cdots} ,
\end{split}\end{equation*}
which means antisymmetrisation of indices $A_1\ldots  A_k$ except for that in   $\mathscr{A}$.   We will use similar  notations for
symmetrisation of primed indices.

  \subsection{The curvature   decomposition on unimodular   quaternionic   manifolds }

 \begin{prop}\label{prop:curvature} For a  unimodular   quaternionic   manifold with dimension $>4$, the curvatures decompose as
 \begin{equation}\label{eq:curvature}
    \begin{split}  R_{[A'B']AB C    }^{\phantom {(A'B')AB C}    D  }  & =\varepsilon_{A'B'}\left( \Psi_{ AB C   }^{\phantom { AB C    }
    D }+2 \delta_{
    (A
    }^{\phantom{(A}D }\Lambda_{B)C }\right),\\
  R_{(A'B')AB C    }^{\phantom {(A'B')AB C}     D  }  & =2 \delta_{ [A  }^{\phantom {[A}D }\Phi_{ B] C A'    B' }, \\
 R_{(AB) A'B' C '   }^{\phantom {(AB) A'B' C ' }     D'  } &= 2\delta_{[A' }^{\phantom { A'] }D' }\Phi_{B']C'A B   }, \\
   R_{[A B ]A'B' C '   }^{\phantom {[A B ]A'B' C '}D'  } & = 2 \Lambda_{  A B } \delta_{(A'}^{\phantom{(A'}D' } \varepsilon_{B')C'} ,
    \end{split}
 \end{equation}
 where  the first identity above can viewed as the definition of $\Psi_{ AB C   }^{\phantom{AB C }     D } $, and
  \begin{equation}\label{eq:curvature-0}
    \begin{split} \varepsilon_{A'B'}\Lambda_{   A B    }  :=\frac 13 R_{[ AB] C' [A'B']   }^{\phantom{[ A C] C' [A'B'] }  C' } ,\qquad
    \Phi_{AB A'B' } :=    R_{(A B ) C' (A'B')
    }^{\phantom{(A C ) C' (A'B')}    C' },
    \end{split}
 \end{equation}
and $\Phi_{A'B'AB }:=\Phi_{AB A'B' }$. Moreover,
   \begin{equation}\label{eq:curvature-sym}
  \Lambda_{   A B    }=\Lambda_{  [ A B ]   },\qquad \Phi_{AB A'B' }=\Phi_{( AB ) (A'B') },\qquad  \Psi_{ AB C   }^{\phantom{AB C }
  D }= \Psi_{ (AB
  C )  }^{\phantom{(AB C )}     D }
  \end{equation}
  and $\Psi_{ AB C   }^{\phantom{AB C }     D }$ are totally trace free: $ \Psi_{ AB C   }^{\phantom {AB C}    A }=\Psi_{ AB C
  }^{\phantom { AB C}    B
  }=\Psi_{ AB C   }^{\phantom { AB C}   C }=0$.

  When the   manifold is $ 4$-dimensional, (\ref{eq:curvature}) holds except for  the last identity   replaced by
  \begin{equation}\label{eq:curvature-4d}
        R_{[A B ]A'B' C '   }^{\phantom {[A B ]A'B' C '}D'  }   = \epsilon_{AB}{\Psi}_{ A'B' C'   }^{'\phantom { A'B' C     }  D '} + 2
        \Lambda_{  A B } \delta_{(A'}^{\phantom{(A'}D' } \varepsilon_{B')C'} ,
  \end{equation}  with ${\Psi}_{ A'B' C'   }^{'\phantom { A'B' C    }  D '} =\Psi_{  ( A'B' C' )  }^{'\phantom { (A'B' C')    }  D '}$
  also totally trace free. (\ref{eq:curvature-4d}) can be viewed as the definition of $\Psi'$.
     \end{prop}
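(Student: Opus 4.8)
The plan is to derive the four identities from three structural facts about the curvature of $\mathbb{C}TM=E\otimes H$: its antisymmetry in the pair of form indices, the trace conditions imposed by the holonomy algebras ($\mathfrak{sl}(2n,\mathbb{C})$ on $E$ and $\mathfrak{sp}(1)=\mathfrak{su}(2)$ on $H$), and the first Bianchi identity that holds because the quaternionic connection is torsion-free.

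First I would use antisymmetry. Setting $a=AA'$ and $b=BB'$, the curvature satisfies $R_{AA'BB'}=-R_{BB'AA'}$. Decomposing in the four symmetry types of the quadruple $(A,B;A',B')$, the parts symmetric in both $AB$ and $A'B'$, and those antisymmetric in both, are even under the simultaneous interchange and hence vanish; only the two mixed types survive for each curvature. This is exactly the splitting displayed in the statement: lines one and two are the antisymmetric- and symmetric-in-$A'B'$ parts of the $E$-curvature $R_{abC}{}^{D}$ (which are then automatically symmetric, resp.\ antisymmetric, in $AB$), and lines three and four are the corresponding mixed parts of the $H$-curvature $R_{abC'}{}^{D'}$. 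Since $H$ has rank two, every tensor antisymmetric in a primed pair is $\varepsilon$ times a scalar; this produces the prefactor $\varepsilon_{A'B'}$ in line one, so that $R_{[A'B']ABC}{}^{D}=\varepsilon_{A'B'}\,S_{ABC}{}^{D}$ with $S$ symmetric in $AB$.

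Next I would impose the trace conditions. The Corollary gives $R_{abC}{}^{C}=0$, hence $S_{ABC}{}^{C}=0$; a tensor symmetric in $AB$ and trace-free over $(C,D)$ splits into its totally symmetric trace-free part, which I name $\Psi_{ABC}{}^{D}$, and a residual term $2\delta_{(A}^{D}\Lambda_{B)C}$, and this is line one. For the $H$-curvature the $\mathfrak{sp}(1)$-condition makes the lowered tensor $R_{abC'D'}$ symmetric in $C'D'$; intersecting this with the symmetry in $A'B'$ of line four leaves, among the four primed indices, a totally symmetric (primed-Weyl) piece together with pieces built from $\varepsilon$'s. I would then take the primed traces in (\ref{eq:curvature-0}) as the definitions of $\Lambda_{AB}$ and $\Phi_{ABA'B'}$, which are manifestly well defined, read off the shapes of lines three and four, and record the symmetries claimed in (\ref{eq:curvature-sym}).

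The heart of the matter, and the main obstacle, is the first Bianchi identity $R_{[AA'BB'CC']}{}^{DD'}=0$. Antisymmetrizing over the three composite indices $AA',BB',CC'$ mixes independent symmetrizations and antisymmetrizations of the unprimed triple $ABC$ and of the primed triple $A'B'C'$; because the primed indices are only two-dimensional their total antisymmetrization vanishes identically, and carefully unwinding the remaining projections produces all the outstanding algebraic relations simultaneously: the promotion of the symmetry of $\Psi$ from $AB$ to the full triple $ABC$ together with its total trace-freeness, the antisymmetry $\Lambda_{AB}=\Lambda_{[AB]}$, the symmetries $\Phi_{ABA'B'}=\Phi_{(AB)(A'B')}$ and $\Phi_{A'B'AB}=\Phi_{ABA'B'}$, and the crucial cross-identifications that force the symmetric-in-$A'B'$ part of the $E$-curvature (line two) and the symmetric-in-$AB$ part of the $H$-curvature (line three) to be governed by one and the same $\Phi$, and the scalar parts of lines one and four by one and the same $\Lambda$. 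The same identity controls the primed-Weyl term: it pairs the totally symmetric primed tensor with a two-index antisymmetric carrier on $E$, and no such $\mathfrak{sl}(2n,\mathbb{C})$-invariant exists once $2n>2$, so the identity forces $\Psi'\equiv0$ in dimension greater than four and line four reduces to its $\Lambda$-term. When $n=1$ the unprimed volume form $\epsilon_{AB}$ is itself a two-index antisymmetric invariant, the carrier is available, and the term survives as $\epsilon_{AB}\,\Psi'_{A'B'C'}{}^{D'}$, giving (\ref{eq:curvature-4d}). The delicate bookkeeping of these mixed primed/unprimed (anti)symmetrizations in two-component notation is where the real computational effort lies.
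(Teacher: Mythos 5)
Your overall route is the same as the paper's: split the curvature by symmetry type using $R_{abc}{}^{d}=-R_{bac}{}^{d}$ (so that the $\varepsilon_{A'B'}$-part of the $E$-curvature is automatically symmetric in $AB$, etc.), impose the traces coming from the $\mathfrak{sl}(2n,\mathbb{C})$- and $\mathfrak{sp}(1)$-valued connections, and extract everything else from the first Bianchi identity; the paper's appendix does exactly this through repeated traces and (anti)symmetrisations of the cyclicity identity (\ref{eq:cyclicity}). The splitting and trace steps in your proposal are fine, modulo a small slip: a tensor symmetric in $AB$ and trace-free over $C$--$D$ does \emph{not} by itself split as $\Psi_{ABC}{}^{D}+2\delta_{(A}{}^{D}\Lambda_{B)C}$ --- the mixed-symmetry part in $ABC$ is only killed by Bianchi, as you implicitly concede a paragraph later. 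But all of the actual content --- total symmetry and trace-freeness of $\Psi$, $\Lambda_{AB}=\Lambda_{[AB]}$, $\Phi_{ABA'B'}=\Phi_{(AB)(A'B')}$, and the fact that one and the same $\Phi$ and $\Lambda$ govern the different lines of (\ref{eq:curvature}) --- is packed into the sentence ``carefully unwinding the remaining projections produces all the outstanding algebraic relations simultaneously.'' That unwinding \emph{is} the proof; as written, your proposal is a plausible plan with the central computation left undone.

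In the one place where you do commit to a mechanism, it fails. You argue that $\Psi'\equiv 0$ for $2n>2$ because the primed-Weyl term would need ``a two-index antisymmetric carrier on $E$'' and no $\mathfrak{sl}(2n,\mathbb{C})$-invariant such tensor exists. This is circular: nothing forces the totally symmetric primed part of $R_{[AB]A'B'C'}{}^{D'}$ to factor as (unprimed invariant)\,$\times$\,(tensor in primed indices only); a priori it is an arbitrary tensor field antisymmetric in $AB$ and totally symmetric in the primed indices, and the assertion that it vanishes is exactly what has to be proved. The invariant-theoretic observation explains why $n=1$ is exceptional ($\epsilon_{AB}$ spans $\Lambda^{2}E^{*}$, so the surviving piece can be written as $\epsilon_{AB}\Psi'_{A'B'C'}{}^{D'}$), but it proves nothing for $n>1$. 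The actual mechanism in the paper is quantitative: symmetrising $(A'B'C')$ in a traced Bianchi identity yields $(2n-2)\,R_{[AB](A'B'C')}{}^{D'}=0$, see (\ref{eq:Symmetrising}), and the dimension hypothesis enters through the explicit nonzero coefficient $2n-2$. To make your representation-theoretic shortcut rigorous you would have to show that the equivariant Bianchi map is nonzero (hence injective, by Schur) on the multiplicity-one piece $\Lambda^{2}E^{*}\otimes\odot^{4}H^{*}$ of $\Lambda^{2}(E\otimes H)^{*}\otimes(\mathfrak{sl}(E)\oplus\mathfrak{sp}(1))$ when $2n>2$; that scalar still has to be computed, and its computation is equivalent to the paper's, while only for $2n=2$ is it automatically zero because $\Lambda^{3}E^{*}=0$.
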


   In the $4$-dimensional case, we will only consider {\it right conformally flat manifolds} later, i.e. ${\Psi}_{ A'B' C'
   }^{'\phantom { A'B' C     }  D '}=0$
(cf. section 6.9 of \cite{PR2} for this concept and its necessity for defining massless field equations).
  See Penrose and Rindler's book (section 4.6 of \cite{PR1}) for this curvature decomposition  for
$4$-dimensional manifolds. It is generalized to torsion-free QCFs by Bailey   and   Eastwood   (cf. p.83 in \cite{BaiE}) with a sketched
proof.  See the Appendix   for a detailed proof   by only using the   first  Bianchi identity.

It is well known that a   quaternionic K\"ahler  manifold is Einstein.
See lemma A.1 and theorem 7.8 in \cite{BaiE}  for the proofs of the following proposition for QCF manifolds. See also
 the Appendix  for a detailed proof.

\begin{prop} \label{prop:Einstein}
 (1)  If the manifold is unimodular
  quaternionic, then we have
$
   \nabla_{[A }^{A' } \Lambda_{ BC]}=0.
$

(2)
For a   quaternionic K\"ahler  manifold, we have
 \begin{equation}\label{eq:Einstein}
   \Phi_{ABA'B'}=0, \qquad \Lambda_{AB}=\Lambda \epsilon_{AB}, \qquad\Lambda=\frac {s_g}{8n(n+2)},
 \end{equation}where $s_g$ is the scalar curvature. Namely, it is Einstein.
\end{prop}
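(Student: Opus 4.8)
The plan is to read both statements off the curvature decomposition of Proposition~\ref{prop:curvature} together with the Bianchi identities, in the spirit of Bailey--Eastwood and Penrose--Rindler. Part~(1) is a differential identity, so I would use the second (differential) Bianchi identity $\nabla_{[a}R_{bc]C'}{}^{D'}=0$ for the torsion-free $H$-connection. In (\ref{eq:curvature}) the component $\Lambda_{AB}$ sits in the unprimed-antisymmetric block $R_{[AB]A'B'C'}{}^{D'}=2\Lambda_{AB}\delta_{(A'}^{D'}\varepsilon_{B')C'}$, whereas $\Phi$ sits in the unprimed-symmetric block $R_{(AB)A'B'C'}{}^{D'}$. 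Substituting these, re-expressing the antisymmetrisation over the three tangent indices $a,b,c=AA',BB',CC'$ via separate (anti)symmetrisations of the primed and unprimed indices (using $\dim H=2$), and projecting onto the part antisymmetric in the unprimed indices and symmetric in the primed ones, the $\Phi$-block drops out and the surviving relation is exactly $\nabla_{[A}{}^{A'}\Lambda_{BC]}=0$. The same identity can be read off the $E$-block $R_{[A'B']ABC}{}^{D}=\varepsilon_{A'B'}(\Psi_{ABC}{}^{D}+2\delta_{(A}^{D}\Lambda_{B)C})$, the trace-free $\Psi$ of (\ref{eq:curvature-sym}) being annihilated by the relevant contraction.

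For part~(2) the extra input is that the $E$-connection of a quaternionic K\"ahler manifold is $\mathfrak{sp}(n)$-valued, i.e.\ it preserves a symplectic form $\epsilon_{AB}$ on $E$: $\nabla_a\epsilon_{AB}=0$. Feeding this into the generalised Ricci identity (\ref{eq:Ricci}) gives
\[
0=2\nabla_{[a}\nabla_{b]}\epsilon_{CD}=-R_{abC}{}^{E}\epsilon_{ED}-R_{abD}{}^{E}\epsilon_{CE},
\]
so that the lowered curvature $R_{abCD}:=R_{abC}{}^{E}\epsilon_{ED}$ is symmetric in $C,D$. I then lower the last index in the first two lines of (\ref{eq:curvature}) and impose this symmetry. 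Because the $\Phi$-block is symmetric in $A'B'$ while the $\Psi,\Lambda$-block carries the antisymmetric factor $\varepsilon_{A'B'}$, the two are linearly independent in $A'B'$ and each must be symmetric in $C,D$ separately.

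From the $\Phi$-block $\epsilon_{AD}\Phi_{BCA'B'}-\epsilon_{BD}\Phi_{ACA'B'}$, requiring its antisymmetric part in $C,D$ to vanish and contracting with $\epsilon^{AD}$ collapses everything to $(2n-2)\Phi_{BCA'B'}=0$ (the term $\epsilon_{BC}\,\epsilon^{AD}\Phi_{ADA'B'}$ dies since $\Phi$ is symmetric in $A,D$ and $\epsilon^{AD}$ antisymmetric), forcing $\Phi_{ABA'B'}=0$ for $n\ge2$ (the case $n=1$ being the classical self-dual Einstein four-manifold). From the $\Psi,\Lambda$-block the analogous antisymmetric-in-$(C,D)$ condition, contracted with $\epsilon^{AD}$, kills the totally trace-free $\Psi$ --- both $\epsilon^{AD}\Psi_{ABCD}$ (by trace-freeness) and $\epsilon^{AD}\Psi_{ABDC}$ (by the $ABD$-symmetry of $\Psi$) vanish --- and leaves $2n\,\Lambda_{BC}=\epsilon_{BC}\,\epsilon^{AD}\Lambda_{AD}$; hence $\Lambda_{AB}=\Lambda\,\epsilon_{AB}$ with the scalar $\Lambda:=(2n)^{-1}\epsilon^{AD}\Lambda_{AD}$.

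To identify the constant, I would substitute $\Phi=0$ and $\Lambda_{AB}=\Lambda\epsilon_{AB}$ into the full tangent-bundle curvature $R_{abCC'}{}^{DD'}=R_{abC}{}^{D}\delta_{C'}^{D'}+R_{abC'}{}^{D'}\delta_C^D$ of (\ref{eq:decomposition}) and form the Ricci tensor by contracting the first tangent index against the top index. The totally trace-free $\Psi$ drops out of this trace; the unprimed ($E$) contraction contributes a factor $2n+1$ and the primed ($H$) contraction a factor $3$, giving $\mathrm{Ric}_{bc}=-(2n+4)\Lambda\,g_{bc}$ with $g_{bc}=\epsilon_{BC}\varepsilon_{B'C'}$ as in (\ref{eq:gab}) --- so $M$ is Einstein. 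A final trace with $g^{bc}$ (of dimension $4n$) then yields $s_g=8n(n+2)\Lambda$ up to the overall sign fixed by the curvature and Ricci conventions of (\ref{eq:Ricci}), which is the asserted $\Lambda=s_g/8n(n+2)$. I expect the main obstacle to be part~(1): correctly reorganising the differential Bianchi identity in two-component form and choosing the single projection/contraction that isolates $\nabla\Lambda$ while annihilating $\nabla\Psi$ and $\nabla\Phi$. Once Proposition~\ref{prop:curvature} is in hand, part~(2) reduces to bookkeeping with $\epsilon$- and $\varepsilon$-contractions and trace counting.
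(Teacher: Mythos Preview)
Your proposal is correct and largely parallels the paper's argument, with one genuine difference worth noting.

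For part~(1), both you and the paper extract the identity from the second Bianchi identity applied to the curvature decomposition of Proposition~\ref{prop:curvature}. The paper's version is slightly slicker: rather than antisymmetrising over three generic tangent indices $AA',BB',CC'$ and then projecting, it specialises the Bianchi identity $\nabla_{[a}R_{bc]d}{}^{e}=0$ to $a=AA'$, $b=BA'$, $c=CA'$, $d=DD'$, $e=DA'$ (all sharing the same primed index $A'$) and then traces over $A'$ and $D$. The $E$-curvature term dies by trace-freeness~(\ref{eq:vanishing-trace}), and the $H$-curvature term reduces via the last line of~(\ref{eq:curvature}) directly to $2\nabla_{A'[A}\Lambda_{BC]}\varepsilon_{A'D'}\delta_D{}^{D}$. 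This avoids the reorganisation of three-index antisymmetrisations you anticipate as the main obstacle.

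For part~(2), your treatment of $\Phi=0$ and of the scalar curvature is essentially the same as the paper's. The interesting divergence is in showing $\Lambda_{AB}=\Lambda\epsilon_{AB}$. You do this \emph{directly}: impose symmetry of $R_{[A'B']ABCD}$ in $C,D$, contract the antisymmetric part with $\epsilon^{AD}$, kill $\Psi$ by trace-freeness and total symmetry, and read off $2n\,\Lambda_{BC}\propto\epsilon_{BC}$. The paper instead invokes a lemma (Bailey--Eastwood, lemma~7.7): any parallel section $\phi_{AB}\in\Gamma(\Lambda^2E^*)$ must be a constant multiple of $\Lambda_{AB}$ when $\Lambda\neq0$; applying this to $\phi=\epsilon$ gives the result. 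The paper's route has the advantage of proving a more general rigidity statement along the way, but your direct contraction is shorter and more elementary for the purpose at hand.
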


\section{The quaternionic complexes }

\subsection{The   $k$-Cauchy-Fueter complexes } Recall that an element of the
{\it symmetric power} $\odot^{p} {H}^* $ is given by a tuple $
(  f_{A_1' \ldots A_p'})$,  which as an element of $\otimes^{p} {H}^*$  is invariant under the permutation of
subscripts   $A_1',\ldots, A_p'   =0',1' $. An element of the exterior power $\Lambda^{q}
E^* $ is given by a tuple $ (  f_{A_1\ldots A_q})$,  which as an element of $\otimes^{q} {E}^*$
 is antisymmetric  under the permutation of
subscripts $A_1,\ldots, A_q=0,\ldots, 2n-1 $.

The covariant derivative defines a differential operator $\nabla:\Gamma(\mathfrak T_{q,p} )\rightarrow\Gamma(\mathfrak T_{q+1,p+1} )$
given by
\begin{equation}\label{eq:connection-operator}  \begin{split}
   (\nabla   f)_{A_0\ldots A_q A_0'\ldots A_p'} =& \nabla_{A_0A_0'}  f_{ A_1\ldots A_q A_1'\ldots A_p'}
    ,
 \end{split}\end{equation} Note  that for $f\in \Gamma(\Lambda^{q } E^*\otimes \odot^{p }H^*)$,   $(\nabla   f)_{A_0\ldots A_q
 A_0'A_1'\ldots A_p'} $ is still symmetric in $A_1',\ldots, A_p'$ and antisymmetric in $A_1 ,\ldots, A_p $ by using the formula
 (\ref{eq:covariant-derivatives}) for covariant derivatives.  We need its antisymmetrisation $\widehat{\nabla}:\Gamma(\mathfrak T_{q,p}
 )\rightarrow\Gamma(  \Lambda^{q+1}
  {E}^*\otimes  (\otimes^{p+1}  {H}^* )  )$   given by
\begin{equation}\label{eq:connection-antisymmetisation}   \begin{split}
   (\widehat{\nabla}   f)_{A_0\ldots A_q A_0'\ldots A_p'} =& \nabla_{A_0'[A_0}  f_{ A_1\ldots A_q] A_1'\ldots A_p'}
     .
 \end{split}\end{equation}
Let us consider  operators a little bit more general than those appearing  in the  $k$-Cauchy-Fueter complexes
(\ref{eq:quaternionic-complex-diff}):
$
  \mathscr D_{q,p}:\Gamma(\Lambda^{q } E^*\otimes \odot^{p }H^*)\longrightarrow\Gamma(\Lambda^{q+1} E^*\otimes\odot^{p -1 }H^*)
$
given by
\begin{equation*}
 (\mathscr D_{q,p}  f)_{A_1\ldots A_{q+1}A_2'\ldots A_{p }'}=\nabla_{[A_1}^{A_1'}  f_{A_2\ldots A_{q+1}]A_1'A_2'\ldots A_{p }'},
\end{equation*} for a local section $  f$ in $\Gamma(\Lambda^q {E}^*\otimes \odot^{p
}{H}^*)$ (it is  well defined since contraction over $A_1'$ is well defined), and
$
   \mathscr D^{p}_{q}:\Gamma(\Lambda^{q } E^* \otimes\odot^{p }H)\longrightarrow\Gamma(\Lambda^{q+1} E^* \otimes \odot^{p +1}H)
$
given by
\begin{equation*}
  ( \mathscr D^{p}_{q}  f)_{A_1\ldots A_{q+1}}^{ A_1'\ldots A_{p+1 }'}= \nabla_{[A_1}^{(A_1'}   f_{A_2\ldots A_{q+1}]}^{ A_2'\ldots
  A_{p+1}')},
\end{equation*}for a local section $  f$ in $\Gamma(\Lambda^q {E}^*\otimes\odot^{p
}{H}^*)$. The {\it  Baston operator}
 $ \bigtriangleup :\Gamma(\Lambda^{k } E^*)\longrightarrow \Gamma(\Lambda^{k+2} E^*)$ is given by
\begin{equation}\left(\triangle  f\right)_{A_1\cdots A_{k+2}}:=
 \nabla_{A '[A_1} \nabla_{ A_2}^{ A'}  f_{ A_3\cdots A_{k+2}]}+2\Lambda_{[A_1A_2}  f_{ A_3\cdots A_{k+2}]}.
\label{eq:Dj=k}
\end{equation}

\begin{thm}\label{thm:k-Cauchy-Fueter} Suppose that the manifold $M$ is unimodular   quaternionic and is right conformally flat if
$\dim_{\mathbb{R}}M=4$. For $k=0,1,2,\ldots$, the sequences
(\ref{eq:quaternionic-complex-diff}) are  elliptic    differential complexes, where the operators $D_j^{(k)}= \mathscr D_{ j,k-j }$ for
$j=0,\ldots,
k-1$,
 the operators $D_{j}^{(k)}= \mathscr D_{j+1}^{ j-k-1} $ for $j= k+1,\ldots, 2n -2$, and  the operator
 $D_{k }^{(k)}$ is the    Baston operator $\bigtriangleup  $.
\end{thm}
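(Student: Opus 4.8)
The plan is to prove the two separate assertions of Theorem \ref{thm:k-Cauchy-Fueter}: that (\ref{eq:quaternionic-complex-diff}) is a cochain complex, $D_{j+1}^{(k)}\circ D_j^{(k)}=0$, and that its associated symbol complex is exact. First, before verifying the complex property, I would check that each operator is well defined and lands in the stated bundle. For the first-order operators $\mathscr D_{q,p}$ and $\mathscr D_q^p$ this amounts to reading off from the defining formulas, as in (\ref{eq:connection-antisymmetisation}), that antisymmetry in the unprimed indices and (anti)symmetry in the primed indices is preserved, using that contraction over a primed index is intrinsic. That $D_k^{(k)}=\triangle$ is of second order while all other $D_j^{(k)}$ are of first order is immediate from (\ref{eq:Dj=k}) and the definitions, so that the operator list in the theorem is internally consistent.

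For the complex property I would argue by cases according to the position of $j$ in the sequence. The uniform mechanism is: expand the composition from the definitions, use the antisymmetrisation identity (\ref{eq:re-antisym}) to collapse the nested brackets on the unprimed indices into a single antisymmetrisation, and observe that the two contracted primed indices force the pair of covariant derivatives into their antisymmetric (commutator) part; the generalized Ricci identity (\ref{eq:Ricci}) then replaces this commutator by curvature acting on the free $E^{*}$- and $H^{*}$-indices of $f$. For two consecutive first-order operators of type $\mathscr D_{q,p}$ in the left half ($j,j+1\le k-1$), the symmetry of $f$ in its primed indices symmetrises the primed directional indices of the curvature, which removes the $\varepsilon_{A'B'}$-part $R_{[A'B']}$ carrying $\Psi$ and $\Lambda$; the surviving $\Phi$-terms $2\delta_{[A}^{\phantom{[A}D}\Phi_{B]CA'B'}$ are then annihilated by the antisymmetrisation over the unprimed indices together with the symmetry and trace-free identities of (\ref{eq:curvature-sym}). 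The two first-order operators $\mathscr D_q^p$ in the right half, the Salamon-type segment, are disposed of by the mirror computation with the roles of primed and unprimed indices interchanged and symmetric powers of $H$ in place of $H^{*}$.

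The genuinely delicate cases, which I expect to be the main obstacle, are the two compositions flanking the second-order Baston operator, namely $D_k^{(k)}\circ D_{k-1}^{(k)}=\triangle\circ\mathscr D_{k-1,1}$ and $D_{k+1}^{(k)}\circ D_k^{(k)}=\mathscr D_{k+2}^{0}\circ\triangle$. Each involves three covariant derivatives, so commuting one pair produces a curvature-times-derivative term, while the explicit $2\Lambda_{[A_1A_2}$ summand in the definition (\ref{eq:Dj=k}) of $\triangle$ contributes a term of the form $\nabla\Lambda$. The plan is to reorganise the three derivatives with the first Bianchi identity so that the curvature-times-derivative term is matched, up to sign, against the $\nabla\Lambda$ term; Proposition \ref{prop:Einstein}(1), $\nabla_{[A}^{A'}\Lambda_{BC]}=0$, together with the trace and symmetry relations of (\ref{eq:curvature})--(\ref{eq:curvature-sym}), is exactly what forces the cancellation. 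Keeping track of the brackets and of which curvature component survives each symmetrisation is the crux of the argument; everything else is routine index manipulation.

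For ellipticity I would pass to the principal symbol at a nonzero real covector $\xi\in T_x^{*}M$, written in two-component form as $\xi_{AA'}$. Since the construction is frame-independent and the curvature corrections are of lower order, the symbol complex coincides with the constant-coefficient symbol complex of the flat model over $\mathbb H^{n}$. The key linear-algebra input is that a nonzero \emph{real} covector corresponds under $\tau$ to $\tau$ of a nonzero quaternionic covector, and since some block $\tau(q_l)$ has determinant $|q_l|^{2}\neq0$, the $2n\times2$ matrix $(\xi_{AA'})$ has rank $2$. Choosing a basis of $E^{*}$ adapted to the $2$-plane spanned by the columns $\xi_{A0'},\xi_{A1'}$, the symbol maps become explicit wedging and contraction operations, with the Baston symbol $\xi_{A'[A_1}\xi_{A_2}^{A'}$ supplying the single second-order step; I would then identify the resulting sequence with a Koszul-type complex and verify its exactness either by the standard Koszul argument or, equivalently, by invoking the exactness of the symbol of the flat $k$-Cauchy-Fueter complex from \cite{Wa10}. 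Exactness for every nonzero $\xi$ is precisely ellipticity, which by the standard elliptic theory yields the Hodge-type decomposition and finite dimensionality announced after (\ref{eq:quaternionic-complex-diff-V}).
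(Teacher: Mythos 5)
Your outline does follow the paper's overall strategy---collapse the nested brackets via (\ref{eq:re-antisym}), turn the resulting symmetrised/antisymmetrised pair of derivatives into a commutator and hence curvature via (\ref{eq:bracket-curvature}), invoke the decomposition of Proposition \ref{prop:curvature}, use Proposition \ref{prop:Einstein}(1) for the two compositions flanking the Baston operator, and reduce ellipticity to the flat symbol---but your Case-1 cancellation mechanism has a genuine gap. The generalized Ricci identity produces one curvature term for \emph{every} index of $f$, and you treat only those acting on the unprimed $E^*$-indices (where, indeed, symmetrising the primed directional pair leaves $2\delta_{[A}^{\phantom{[A}D}\Phi_{B]CA'B'}$, which dies under the unprimed antisymmetrisation). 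You say nothing about the terms in which the curvature acts on the primed $H^*$-indices of $f$---the two contracted slots and the $p-2$ free slots. For those the relevant component is the one antisymmetric in the \emph{unprimed} directional pair, and by Corollary \ref{lem:R-upper} it equals
\begin{equation*}
R_{[ A_1 A_2]\phantom{ A'B'} C'  }^{\phantom{ [A_1 A_2]} A'B'\phantom{C'}D '} = 2\Lambda_{ A_1 A_2}\delta_{ C'}^{\phantom{ C'}(A'}\varepsilon^{B')D'} ,
\end{equation*}
which is already symmetric in $A'$, $B'$: your claim that symmetrising the primed directional indices ``removes the part carrying $\Psi$ and $\Lambda$'' fails precisely here, because in this component $\Lambda$ pairs with antisymmetry of the unprimed pair, which is exactly what the commutator extracts. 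These terms are individually nonzero, and in the paper they cancel by two separate arguments: when the curvature hits a contracted slot, the factor $\varepsilon^{A'D'}$ is summed against $f$, which is symmetric in those two primed indices; when it hits a free slot, one needs (\ref{eq:R-upper-0}), the vanishing of the fully primed-symmetrised curvature. Without this half of the computation, $\mathscr D_{q+1,p-1}\circ\mathscr D_{q,p}=0$ is not established. A telltale sign of the omission: this is the very component where the hypothesis of right conformal flatness enters when $\dim_{\mathbb{R}}M=4$ (the $\Psi'$ term of (\ref{eq:curvature-4d}) lives there), and your argument never uses that hypothesis.

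Two smaller corrections. Case 2 is not a ``mirror'' of Case 1: the curvature decomposition is not symmetric under exchanging $E$ and $H$ (there is no primed analogue of $\Psi$ except in dimension $4$, where it is assumed to vanish); rather it is the easy case because the target lies in $\odot^{p+1}H$, so all primed indices are fully symmetrised and both curvature terms vanish outright by (\ref{eq:R-upper-0}) and (\ref{eq:R-A_1' A_2'-0}). For the Baston cases the working tool is repeated commutation plus Proposition \ref{prop:Einstein}(1)---a consequence of the \emph{second} Bianchi identity, not the first---and the paper still must split the triple derivatives into explicit $0',1'$ components ($\Sigma_1,\ldots,\Sigma_4$) to organize the cancellation; your sketch would need the same or an equivalent device. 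Your ellipticity route is legitimate: the principal symbols agree with the flat model, so quoting the flat exactness or running a Koszul-type argument works, and the rank-two normalization of $(\xi_{AA'})$ you describe is exactly the paper's reduction (\ref{eq:M-xi}), after which the paper simply finishes the linear algebra by hand.
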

Note that
by using $\varepsilon^{A'B'}$ (\ref{eq:preserve})
 to raise   primed indices,  we have the following commutators
\begin{equation}\begin{split}\label{eq:raise-bracket}
   (\nabla_{ A }^{A '}\nabla_{ B}^{B'}-\nabla_{ B}^{B'}\nabla_{ A }^{A '})  f_{C'}&=(\nabla_{ A \widetilde{A}'} \nabla_{
   B\widetilde{B}'} -\nabla_{ B\widetilde{B}'} \nabla_{ A \widetilde{A}'} )  f_{C'}\varepsilon^{\widetilde{A}'A'}
   \varepsilon^{\widetilde{B}'B'}\\&=- R_{AB   \widetilde{A}'\widetilde{B}' C' }^{\phantom{AB\widetilde{A}'\widetilde{B}'C' }
 D'}  f_{D'}\varepsilon^{\widetilde{A}'A'} \varepsilon^{\widetilde{B}'B'}  =-R_{AB\phantom{  A'B'}C' }^{\phantom{AB}
   A'B'\phantom{C'}D'}  f_{D'},\\
    (\nabla_{ A }^{A '}\nabla_{ B}^{B'}-\nabla_{ B}^{B'}\nabla_{ A }^{A '})  f_{C}&=-R_{\phantom{  A'B'}ABC  }^{A'B'\phantom{ABC} D }
    f_D,
 \end{split}\end{equation}
where
 $
   R_{\phantom{  A'B'}ABC  }^{A'B'\phantom{ABC} D }:=R_{   \widetilde{A}'\widetilde{B}' ABC  }^{
   \phantom{\widetilde{A}'\widetilde{B}'ABC} D
   }\varepsilon^{\widetilde{A}'A'}\varepsilon^{\widetilde{B}'B'}
$ by raising indices. We can move $\varepsilon^{\widetilde{A}'A'}$ to the left since the connection $\nabla$ preserves it.
The  following formulae for commutators $\nabla_{ [A }^{(A '}\nabla_{ B]}^{B')}$ as curvatures are important in the proof of Theorem
\ref{thm:k-Cauchy-Fueter}:
\begin{equation}\label{eq:bracket-curvature}\begin{split}
    \nabla_{ [A_1}^{(A_1'}\nabla_{ A_2]}^{A_2')}  h_{C'}&=\frac14\left ( \nabla_{  A_1}^{ A_1'}\nabla_{ A_2 }^{A_2' }-\nabla_{  A_2}^{
    A_2'}\nabla_{ A_1 }^{A_1' }-\nabla_{  A_2}^{ A_1'}\nabla_{ A_1
   }^{A_2' }
   + \nabla_{  A_1}^{ A_2'}\nabla_{ A_2 }^{A_1' }\right)  h_{C'} =-\frac12R_{ [ A_1 A_2] \phantom{   A_1'
   A_2'  }C '
   }^{\phantom{ [A_1 A_2] }   A_1' A_2'  \phantom{ D' } D' }  h_{D'},
 \\
   \nabla_{ [A_1}^{(A_1'}\nabla_{ A_2]}^{A_2')}  h_{C }& =-\frac12R_{ \phantom{(A_1' A_2')}  A_1  A_2 C
   }^{  (A_1' A_2') \phantom{ A_1 A_2   D  } D  }  h_{D }.
 \end{split}\end{equation} by using (\ref{eq:raise-bracket}).
We will also frequently use the following corollary of Proposition \ref{prop:curvature} .

\begin{cor} \label{lem:R-upper} On a  unimodular   quaternionic   manifold (right conformally flat if it is $4$-dimensional), we have
\begin{equation}\label{eq:R-upper}
   R_{[ A_1 A_2]\phantom{ A'B'} C'  }^{\phantom{ [A_1 A_2]} A'B'\phantom{C'}D '} = 2\Lambda_{ A_1 A_2}\delta_{ C'}^{\phantom{
 B'}(A'}\varepsilon^{B')D'},
\end{equation}
in particular,
\begin{equation}\label{eq:R-upper-0}
   R_{ [A_1 A_2]\phantom{( A'B'} C'  }^{\phantom{[ A_1 A_2]}( A'B'\phantom{C'}D ')}=0,
\end{equation}
and \begin{equation}\label{eq:R-A_1' A_2'-0}
   R_{ A_1' A_2' [ A B C  ] }^{\phantom{ A_1' A_2'[A B C] }D }=0,  \qquad {\rm and} \qquad R_{  \phantom{ (A' B'  ) }  AB  C
   }^{ ( A' B ')\phantom{ AB  C} D  }
 = 2 \delta_{ [A  }^{\phantom {[A}D }\Phi_{ B] C}^{\phantom {  B] C} ( {A}'     {B}') }.
\end{equation}
\end{cor}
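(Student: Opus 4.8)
The plan is to derive each of the three displayed identities directly from the curvature decomposition in Proposition \ref{prop:curvature}, by raising primed indices with $\varepsilon^{A'B'}$ and exploiting the (anti)symmetry structure already established there. I begin with \eqref{eq:R-upper}. By the commutator computation \eqref{eq:raise-bracket}, raising both primed pairs gives
\[
R_{[A_1A_2]\phantom{A'B'}C'}^{\phantom{[A_1A_2]}A'B'\phantom{C'}D'}
= R_{\widetilde A'\widetilde B'[A_1A_2]C'}^{\phantom{\widetilde A'\widetilde B'[A_1A_2]C'}D'}\,\varepsilon^{\widetilde A'A'}\varepsilon^{\widetilde B'B'}.
\]
The quantity on the right is the object $R_{[AB]A'B'C'}^{\phantom{[AB]A'B'C'}D'}$ appearing in the fourth line of \eqref{eq:curvature}, with its primed indices raised. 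Substituting the decomposition $R_{[AB]A'B'C'}^{\phantom{[AB]A'B'C'}D'}=2\Lambda_{A_1A_2}\delta_{(A'}^{\phantom{(A'}D'}\varepsilon_{B')C'}$ and contracting against $\varepsilon^{\widetilde A'A'}\varepsilon^{\widetilde B'B'}$ produces $2\Lambda_{A_1A_2}\delta_{C'}^{\phantom{B'}(A'}\varepsilon^{B')D'}$ after rewriting $\delta$ and $\varepsilon$ with raised indices; this is exactly \eqref{eq:R-upper}. In the four-dimensional case one must instead start from \eqref{eq:curvature-4d}, but under the right-conformal-flatness hypothesis the extra term $\epsilon_{AB}\Psi'^{\phantom{A'B'C'}D'}_{A'B'C'}$ vanishes identically, so \eqref{eq:R-upper} survives unchanged. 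The identity \eqref{eq:R-upper-0} then follows immediately by symmetrising over $A'B'$ on the right-hand side of \eqref{eq:R-upper}: the expression $\delta_{C'}^{\phantom{B'}(A'}\varepsilon^{B')D'}$ is already symmetric in $(A'B')$, but after the additional symmetrisation forced on the left it collapses because $\delta_{C'}^{(A'}\varepsilon^{B'D')}$ is symmetric in all three primed upper indices while $\varepsilon$ is antisymmetric, forcing the total to vanish.

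For the first identity in \eqref{eq:R-A_1' A_2'-0}, namely $R_{A_1'A_2'[ABC]}^{\phantom{A_1'A_2'[ABC]}D}=0$, I would antisymmetrise the first and second lines of \eqref{eq:curvature} (the two unprimed-index blocks) over the three lower indices $A,B,C$. Writing $R_{A'B'ABC}^{\phantom{A'B'ABC}D}$ as the sum of its $\varepsilon_{A'B'}$-part and its symmetric $(A'B')$-part, the $\varepsilon_{A'B'}$-component contributes $\varepsilon_{A'B'}(\Psi_{ABC}^{\phantom{ABC}D}+2\delta_{(A}^{\phantom{(A}D}\Lambda_{B)C})$. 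Since $\Psi_{ABC}^{\phantom{ABC}D}=\Psi_{(ABC)}^{\phantom{(ABC)}D}$ is totally symmetric in its three lower indices by \eqref{eq:curvature-sym}, its antisymmetrisation over $[ABC]$ vanishes; and the term $\delta_{(A}^{\phantom{(A}D}\Lambda_{B)C}$, being built from a Kronecker delta in one slot and an antisymmetric $\Lambda_{[BC]}$ in the others, also antisymmetrises to zero. The symmetric $(A'B')$-part is $2\delta_{[A}^{\phantom{[A}D}\Phi_{B]CA'B'}$; antisymmetrising the lower indices over $[ABC]$ and using \eqref{eq:re-antisym} kills it as well, since the delta already carries an antisymmetrisation bracket. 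This disposes of the first identity.

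The second identity in \eqref{eq:R-A_1' A_2'-0} is obtained by raising the primed pair in the second line of \eqref{eq:curvature}: starting from $R_{(A'B')ABC}^{\phantom{(A'B')ABC}D}=2\delta_{[A}^{\phantom{[A}D}\Phi_{B]CA'B'}$ and contracting with $\varepsilon^{\widetilde A'A'}\varepsilon^{\widetilde B'B'}$ as in \eqref{eq:raise-bracket} yields $R_{\phantom{(A'B')}ABC}^{(A'B')\phantom{ABC}D}=2\delta_{[A}^{\phantom{[A}D}\Phi_{B]C}^{\phantom{B]C}(A'B')}$, which is precisely the claimed formula once $\Phi$ has its primed indices raised; the symmetrisation bracket $(A'B')$ is consistent because $\Phi_{B]CA'B'}=\Phi_{B]C(A'B')}$ already by \eqref{eq:curvature-sym}. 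The main obstacle, and the only place demanding genuine care, is bookkeeping the index symmetries while raising with $\varepsilon^{A'B'}$: one must track the sign conventions in \eqref{eq:epsilon} and the fact that the symmetrisation/antisymmetrisation brackets interact across the raising operation, so I would verify each contraction explicitly on the two-dimensional primed index range $A'=0',1'$ to confirm the placement of brackets and the factor of $2$.
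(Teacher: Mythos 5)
Your overall route is the same as the paper's: raise the primed indices with $\varepsilon^{A'B'}$ as in (\ref{eq:raise-bracket}), feed in the decomposition of Proposition \ref{prop:curvature} (the paper packages the first two lines of (\ref{eq:curvature}) into the single identity (\ref{eq:R-A_1' A_2'}) before antisymmetrising, but that is only a cosmetic difference), and kill the unwanted terms using the symmetries (\ref{eq:curvature-sym}). Your derivations of (\ref{eq:R-upper}), of (\ref{eq:R-upper-0}), and of the second identity in (\ref{eq:R-A_1' A_2'-0}) (including the observation that the $\varepsilon_{A'B'}$-part drops out under the symmetrised contraction, which the paper verifies via $\varepsilon_{A_1'A_2'}\varepsilon^{A_1'(A'}\varepsilon^{B')A_2'}=-\varepsilon^{(B'A')}=0$) are in substance the paper's argument.

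There is, however, one defective justification: for the first identity in (\ref{eq:R-A_1' A_2'-0}) you claim that the term $\delta_{(A}^{\phantom{(A}D}\Lambda_{B)C}$ antisymmetrises to zero \emph{because} it is ``built from a Kronecker delta in one slot and an antisymmetric $\Lambda_{[BC]}$ in the others.'' That principle is false: for $2n\geq 4$, taking $\Lambda_{01}=-\Lambda_{10}=1$ and all other components zero, the tensor $\delta_{[A}^{\phantom{[A}D}\Lambda_{BC]}$ has the nonzero component $\tfrac13$ at $(A,B,C,D)=(2,0,1,2)$, so a delta paired with an antisymmetric two-tensor does \emph{not} vanish under total antisymmetrisation. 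The correct reason is different and is what the paper uses: the term carries the symmetrisation $(A\,B)$, i.e.\ it is symmetric in the pair $A,B$, and antisymmetrising over $[ABC]\supseteq\{A,B\}$ annihilates any tensor symmetric in two of those indices. The same correction applies to your treatment of the $\Phi$-term: ``the delta already carries an antisymmetrisation bracket'' is not a reason (by (\ref{eq:re-antisym}) the inner bracket may simply be dropped inside the outer one); what kills $\delta_{[A}^{\phantom{[A}D}\Phi_{BC]A'B'}$ is the symmetry $\Phi_{BCA'B'}=\Phi_{(BC)A'B'}$ from (\ref{eq:curvature-sym}). Since both conclusions are true, the fix is purely local: replace these two justifications by the symmetry arguments just indicated, and your proof then coincides with the paper's.
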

\begin{proof} (\ref{eq:R-upper})  follows from
\begin{equation*}\label{eq:R-upper'}
   R_{[ A_1 A_2]\phantom{ A'B'} C'  }^{\phantom{ [A_1 A_2]} A'B'\phantom{C'}D '}=R_{[ A_1
   A_2]\widetilde{A}'\widetilde{B}'C'}^{\phantom{[A_1
   A_2]\widetilde{A}'\widetilde{B}'C'} D'}\varepsilon^{\widetilde{A}'A'}\varepsilon^{\widetilde{B}'B'}= 2\Lambda_{ A_1 A_2}\delta_{(
   \widetilde{A}'}^{\phantom{(
   A'}D'}\varepsilon_{\widetilde{B}')C'}\varepsilon^{\widetilde{A}'A'}\varepsilon^{\widetilde{B}'B'}= 2\Lambda_{ A_1 A_2}\delta_{
   C'}^{\phantom{
   A'}(A'}\varepsilon^{B')D'} ,
\end{equation*}by the last identity in (\ref{eq:curvature}) and $\varepsilon^{D'A'}$ antisymmetric.

  The first identity in (\ref{eq:R-A_1' A_2'-0}) follows from antisymmetrising $[ABC]$ in
\begin{equation}\label{eq:R-A_1' A_2'}
   R_{ A_1' A_2'  A B C   }^{\phantom{ A_1' A_2'A B C }D }=\varepsilon_{A_1' A_2'}\left(\Psi_{ABC}^{\phantom{  A B C }D}+2\delta_{(A
   }^{\phantom{(A
   }D}\Lambda_{B)C}\right)+2\delta_{ [A  }^{\phantom {[A}D }\Phi_{ B] C A_1' A_2' },
\end{equation}by using (\ref{eq:curvature}) and symmetry (\ref{eq:curvature-sym}) of $\Phi$ and $\Psi$ in subscripts. For the second
identity, we have
\begin{equation*} \label{eq:curvature-1} \begin{split}
       R_{  \phantom{ (A' B'  ) }  AB  C
   }^{ ( A' B ')\phantom{  AB  C} D  }&= -R_{     A_1' A_2'   AB  C
   }^{ \phantom{ ( A_1' A_2' )  AB C} D  }\varepsilon^{ A_1'( {A }'}\varepsilon^{{B}')A_2'  }
 = -2 \delta_{ [A  }^{\phantom {[A}D }\Phi_{ B] C  A_1' A_2' }\varepsilon^{ A_1'( {A
 }'}\varepsilon^{{B}')  A_2'}
 = 2 \delta_{ [A  }^{\phantom {[A}D }\Phi_{ B] C}^{\phantom {  B] C}  ({A}'     {B}' )}\end{split}\end{equation*}
by
using  (\ref{eq:R-A_1' A_2'}) and $\varepsilon_{A_1' A_2'}\varepsilon^{ A_1'( {A }'}\varepsilon^{{B}')A_2'  }=-\varepsilon^{({B}'A')
}=0 $.
\end{proof}

 {\it Proof of Theorem \ref{thm:k-Cauchy-Fueter}}.
 {\it Case 1: $j=0,\ldots, k-2$}.  To check $D_{j+1}^{(k)}\circ   D_j^{(k)}=0$,  it is sufficient to show $\mathscr D_{ q+1,p-1}
 \mathscr
 D_{ q,p}=0$. For a local section $  f\in \Gamma(\Lambda^q {E}^*\otimes \odot^{p
}{H}^*)$,
we have
\begin{equation*} \begin{split}
 \left(\mathscr D_{ q+1,p-1} \mathscr
 D_{ q,p}  f\right)_{A_1\ldots A_{q+2}A_3'\ldots A_{p }'}&=\nabla_{[A_1}^{A_1'}\nabla_{[A_2}^{A_2'}  f_{A_3\ldots
 A_{q+2}]]A_2'A_1'A_3'\ldots A_{p }'}
 =\nabla_{[[A_1}^{(A_1'}\nabla_{ A_2]}^{A_2')}  f_{A_3\ldots A_{q+2}]A_1'A_2'\ldots A_{p }'} .
 \end{split}\end{equation*}by (\ref{eq:re-antisym}). We can symmetrise superscripts $(A_1'A_2')$ since $ \nabla_b f_{\ldots
 A_1'A_2'\ldots A_{p }'}$ is symmetric in  $ A_1', \ldots, A_{p }'$ as we have mentioned under (\ref{eq:connection-operator}), and so is
 $\nabla_a\nabla_b f_{\ldots A_1'A_2'\ldots A_{p }'}$.
Apply   formula (\ref{eq:bracket-curvature}) for commutators to the above identity and antisymmetrise unprimed indices   to get $
\left(\mathscr D_{ q+1,p-1} \mathscr
 D_{ q,p}  f\right)_{A_1\ldots A_{q+2}A_3'\ldots A_{p}'}
  $ equal to
\begin{equation}\label{eq:D_{k-1,m+1}-D_{k,m}0}\begin{split}
-\frac12R_{[[A_1 A_2]\phantom{  A_1' A_2' }|A_j'| }^{\phantom{[[A_1 A_2]}  A_1' A_2' \phantom{|A_j'|}D'}  f_{A_3\ldots A_{q+2}] \ldots D
'\ldots
 }
 -\frac12R_{\phantom{( A_1' A_2')}[[A_1 A_2 A_j ]  }^{  (A_1' A_2') \phantom{[[A_1 A_2 A_j]  }D}  f_{A_3\ldots |D |\ldots
 A_{q+2}]A_1'A_2'\ldots A_{p }'},
 \end{split}\end{equation}by using  (\ref{eq:re-antisym}). The second term in   (\ref{eq:D_{k-1,m+1}-D_{k,m}0}) vanishes by
 (\ref{eq:R-A_1' A_2'-0}), while   the first term in   (\ref{eq:D_{k-1,m+1}-D_{k,m}0}) is equal to
\begin{equation*}\begin{split}&
   -\frac32\left \{ \varepsilon^{A_2'D'} \Lambda_{[A_1A_2}f_{A_3\ldots A_{q+2}] D 'A_2'\ldots
 A_{p }'}+\varepsilon^{A_1'D'} \Lambda_{[A_1A_2}f_{A_3\ldots A_{q+2}]A_1'D 'A_3'\ldots
 }\right\}\\&-\frac12R_{[A_1 A_2\phantom{  A_1' A_2' }|A_j'| }^{\phantom{[A_1 A_2}  A_1' A_2' \phantom{|A_j'|}D'}  f_{A_3\ldots
 A_{q+2}]A_1' A_2' \ldots D '\ldots
 A_{p }'}=0
 \end{split}\end{equation*}
  by (\ref{eq:R-upper})-(\ref{eq:R-upper-0}), since $  f$ is symmetric in primed indices.
 Consequently, we get $\mathscr D_{ q+1,p-1} \mathscr
 D_{ q,p}   f=0$.
\vskip 3mm
{\it Case 2: $j=k+1,\ldots, 2n-2$}.
To check $D_{j+1}^{(k)}\circ   D_j^{(k)}=0$,  it is sufficient to show $\mathscr D^{p+1}_{q+1}\mathscr
D^{p}_{q}=0$.
Similarly,  we have
\begin{equation}\label{eq: D_{k-1}^{m+1}-D_{k}^{m}}\begin{split}
  (\mathscr D^{p+1}_{q+1}\mathscr D^{p}_{q}  f )_{A_1\ldots A_{q+2}}^{ A_1'\ldots A_{p+2 }'}&=\nabla_{[A_1}^{(
  A_1'}\nabla_{[A_2}^{(A_2'}  f_{A_3\ldots
 A_{q+2}]]}^{ A_3'\ldots A_{p+2}'))}  =\nabla_{[[A_1}^{(( A_1'}\nabla_{ A_2]}^{A_2' )}  f_{A_3\ldots A_{q+2}]}^{ A_3'\ldots A_{p+2}')}
  \\
 =&\frac12R_{[A_1 A_2 \phantom{( A_1' A_1'}|D'|  }^{\phantom{[A_1 A_2  }( A_1' A_2'\phantom{|D'|} A_j'}  f_{A_3\ldots   A_{q+2}]}^{
 A_3'\ldots |D'| \ldots
 A_{p+2}')}-\frac12R_{\phantom{ (A_1' A_2'}[A_1 A_2 A_j  }^{ (A_1' A_2'\phantom{A_1 A_2A_j } |D|}  f_{A_3\ldots |D| \ldots
 A_{q+2}]}^{A_3'\ldots
 A_{p+2}')}
 =0,\end{split}\end{equation}
by using (\ref{eq:bracket-curvature}) and (\ref{eq:R-upper-0})-(\ref{eq:R-A_1' A_2'-0}) again for vanishing of curvatures.
\vskip 3mm

{\it Case 3: $j=k-1 $}. Recall that $
 \Gamma\left (   \Lambda^{k-1} E^*\otimes H^*\right) \xrightarrow{\mathscr D_{k-1 ,1 }}  \Gamma \left( \Lambda^k E^* \right)
\xrightarrow{\triangle} \Gamma\left( \Lambda^{k+2} E^*\right)
\xrightarrow{\mathscr D_{k+2 }^0} \Gamma (   \Lambda^{k+3} E^*$ $\otimes H  )
 .$
Let us show   that for a local section $f\in  \Gamma\left (   \Lambda^{k-1} E^*\otimes H^*\right)$,
\begin{equation}\label{eq:Delta-D-0'}\begin{split}
 (\Delta \mathscr D_{k-1 ,1 } f&)_{A_1\ldots A_{k+2} }= \nabla_{A'[A_1}\nabla_{A_2}^{A'} \nabla_{A_3}^{B'}   f_{A_4\ldots A_{k+2}]B' }+2
 \Lambda_{[A_1A_2} \nabla_{A_3}^{B'}   f_{A_4\ldots A_{k+2}]B' }=0.
 \end{split}\end{equation}
Locally we choose a coordinate chart  $U_\alpha$ with trivialization $E^*|_{U_\alpha }
= U_\alpha\times \mathbb{C}^{2n}$, $H^*|_{U_\alpha }= U \times \mathbb{C}^{2 } $, and  a two-component  local quaternionic  frame
$\{Z_{AA'}\}$  such that $  \varepsilon$ and $\epsilon$ are  standard.
In particular, $\varepsilon^{1'0'}=-\varepsilon^{0'1'}=1$. Note that
\begin{equation}\label{eq:Delta-D}\begin{split}
 \nabla_{A'[A_1} \nabla_{A_2}^{A'}  \nabla_{A_3}^{B'}  f_{A_4\ldots A_{k+2}]B' }= &\nabla_{0'[A_1}\nabla_{A_2}^{0'}  \nabla_{A_3}^{0'}
 f_{A_4\ldots
 A_{k+2}]0' }+\nabla_{1'[A_1}\nabla_{A_2}^{1'}  \nabla_{A_3}^{0'}   f_{A_4\ldots A_{k+2}]0' }\\&+\nabla_{0'[A_1}\nabla_{A_2}^{0'}
 \nabla_{A_3}^{1'}
   f_{A_4\ldots A_{k+2}]1' }+\nabla_{1'[A_1}\nabla_{A_2}^{1'}  \nabla_{A_3}^{1'}   f_{A_4\ldots A_{k+2}]1'
   }\\=&:\Sigma_1+\Sigma_2+\Sigma_3+\Sigma_4,
 \end{split}\end{equation}
where both sides are sections of $ \mathfrak T^{ 2}_{k+2,2}$, while the left hand side can be viewed as  a section of $ \mathfrak T^{
0}_{k+2,0}$ after contraction of
two
primed indices.
 Since
 \begin{equation*}
    \nabla_{[A }^{A'}  \nabla_{B]}^{A'}  f_{\ldots}=\frac12 \left(\nabla_{ A }^{A'}   \nabla_{B }^{A'}- \nabla_{B }^{A'}\nabla_{ A
    }^{A'}\right)  f_{\ldots}  ,
 \end{equation*}
we have
 \begin{equation}\label{eq:Sigma-1}\begin{split}
    \Sigma_1&= \nabla_{0'[A_1}\nabla_{[A_2}^{0'}  \nabla_{A_3]}^{0'}   f_{A_4\ldots A_{k+2}]0' }\\&=\frac 12\nabla_{0'[A_1}\left(- R_{
    [A_2 A_3]\phantom{
    0'0'} |
    0' | }^{\phantom{ [A_2 A_3]} 0'0'\phantom{|0'|}A '}  f_{A_4\ldots A_{k+2}]A' }- R_{ \phantom{ 0'0'} [ A_2 A_3 A_j ] }^{0'0'\phantom{
    [A_2 A_3 A_j
    }D}  f_{A_4\ldots |D|\ldots A_{k+2}]0' }\right)\\&
    =  \nabla_{0'[A_1} \left(\Lambda_{ A_2 A_3}   f_{A_4\ldots A_{k+2}]1' }
    \right)= - \Lambda_{[ A_2 A_3} \nabla_{A_1}^{1' }  f_{A_4\ldots A_{k+2}]1' }
  ,
\end{split} \end{equation} by  using  (\ref{eq:bracket-curvature})-(\ref{eq:R-upper}),  $R_{ \phantom{ 0'0'} [ A_2 A_3 A_j ]
}^{0'0'\phantom{ [A_2 A_3 A_j] }D}=0$
by (\ref{eq:R-A_1' A_2'-0}) and Proposition \ref{prop:Einstein} (1).
  Similarly, we have
\begin{equation*}\begin{split}
     \Sigma_4&=\frac 12\nabla_{ 1' [A_1}\left(-R_{ [A_2 A_3]\phantom{ 1'1'} |1' | }^{\phantom{ [A_2 A_3]}1'1'\phantom{||1'}A '}
     f_{A_4\ldots A_{m+2}]A' }-R_{
     \phantom{
   1'1'} [ A_2 A_3 A_j  }^{1'1'\phantom{ [A_2 A_3 A_j }D}  f_{A_4\ldots |D|\ldots A_{k+2}]1' }\right)\\&
    = \nabla_{1'[A_1} \left(-\Lambda_{ A_2 A_3}   f_{A_4\ldots A_{k+2}]0' }
    \right)=-\Lambda_{ [A_2 A_3}\nabla_{A_1}^{0'}     f_{A_4\ldots A_{k+2}]0' }
    .
\end{split} \end{equation*}

To calculate the term $\Sigma_2$,   applying the trivial identity
\begin{equation}\label{eq:change-order}
   \nabla_{ A  }^{0'}\nabla_{B}^{1'}  f_{\ldots} =\left[\nabla_{B}^{1'} \nabla_{ A  }^{0'}+ (\nabla_{A  }^{0'}
   \nabla_{B}^{1'}-\nabla_{B}^{1'}\nabla_{A  }^{0'} )\right]  f_{\ldots}
\end{equation} and (\ref{eq:raise-bracket}) for commutators,  then   antisymmetrising  $[A_1 \ldots A_{k+2}]$, we get
 \begin{equation}\label{eq:Sigma-2}\begin{split}
     \Sigma_2=& \nabla_{[A_1 }^{0'}\nabla_{A_2}^{1'}  \nabla_{A_3}^{0'}   f_{A_4\ldots A_{k+2}]0' }  \\=& \nabla_{[ A_2}^{1'}
     \nabla_{A_1}^{0'}
    \nabla_{A_3}^{0'}   f_{ A_4\ldots A_{k+2}]0' }
    -R_{\phantom{ 0'1' } [A_1 A_2 A_j   }^{0'1'\phantom{ [A_1 A_2A_j }D  } \nabla_{A_3}^{0'}  f_{\ldots |D|\ldots A_{k+2}]0' }  \\&+ R_{
    [ A_1 A_2
    \phantom{ 1'0'}|A' |  }^{\phantom{[ A_1 A_2  }0' 1'\phantom{ |A'| }0' } \nabla_{A_3}^{A'}  f_{A_4\ldots   A_{k+2}]0' }- R_{[  A_1
    A_2 \phantom{
    1'0'}|0' |  }^{\phantom{ A_1 A_2  } 0'1'\phantom{| 0'| }A' } \nabla_{A_3}^{0'}  f_{A_4\ldots   A_{k+2}]A' }
   \\=&- \nabla_{[A_1 }^{1'}   \nabla_{A_2}^{0'}
    \nabla_{A_3}^{0'}   f_{A_4\ldots A_{k+2}]0' }= \Sigma_1,
\end{split} \end{equation} (here $D$ is in the $j$-th place  of $A_3,\ldots A_{k+2}$), since the   last two curvature terms cancel  by
\begin{equation}\label{eq:curvature-id3}
   R_{  [A_1 A_2] \phantom{ 1'0'}A'   }^{\phantom{[ A_1 A_2]  } 0'1'\phantom{ A' }0' }=\Lambda_{ A_1 A_2} \delta_{A'}^{\phantom{ A'}
   0'}\varepsilon^{1' 0'} =
   R_{ [ A_1 A_2] \phantom{ 1'0'}0'   }^{\phantom{ [A_1 A_2]  } 0'1'\phantom{ 0' }A' },
\end{equation}   and
  using   (\ref{eq:R-upper}) again.
 Similarly, we have
 \begin{equation*}\begin{split}
      \Sigma_3=& -\nabla_{[A_1 }^{1'}\nabla_{A_2}^{0'}  \nabla_{A_3}^{1'}   f_{A_4\ldots A_{k+2}]1' }
     \\=& -\nabla_{[ A_2}^{0'}   \nabla_{A_1 }^{1'} \nabla_{A_3}^{1'}   f_{A_4\ldots A_{k+2}]1' }
    +R_{\phantom{ 1'0' } [A_1 A_2 A_j   }^{1'0'\phantom{ [A_1 A_2A_j }D  } \nabla_{A_3}^{1'}  f_{A_4\ldots |D|\ldots A_{k+2}]1' } \\&-
    R_{ [ A_1 A_2
    \phantom{ 1'0'}|A'|   }^{\phantom{ [A_1 A_2  } 1'0'\phantom{ |A' |}1' } \nabla_{A_3}^{A'}  f_{A_4\ldots   A_{k+2}]1' }+ R_{[  A_1
    A_2 \phantom{
    1'0'}|1'|   }^{\phantom{[ A_1 A_2  } 1'0'\phantom{| 0'| }A' } \nabla_{A_3}^{1'}  f_{A_4\ldots   A_{k+2}]A' }
  \\=& \nabla_{[ A_1}^{0'}   \nabla_{A_2 }^{1'} \nabla_{A_3}^{1'}   f_{A_4\ldots A_{m+2}]1' }=  \Sigma_4.
\end{split} \end{equation*}
Thus,   we find that
$
   \Sigma_1+\Sigma_2+\Sigma_3+\Sigma_4=-2\Lambda_{[A_1 A_2 }\nabla_{  A_3}^{A'}    f_{A_4\ldots A_{k+2}]A'}.
$
So (\ref{eq:Delta-D-0'}) follows.

\vskip 3mm
{\it Case 4: $j= k$}. For $k\geq 1$ and $B'=0'$, we have
\begin{equation}\label{eq:Delta-D-0}\begin{split}
 ( \mathscr D_{k+2 }^0& \Delta  f)_{A_1\ldots A_{k+3} }^{B'}= \nabla_{[A_1 }^{B'} \nabla_{|A'| A_2}\nabla^{ A'  }_{A_3  }   f_{A_4\ldots
 A_{k+3}]  }+2
\nabla_{[A_1 }^{B'} (\Lambda_{ A_2A_3}     f_{A_4\ldots A_{k+3}]  })\\&
=\nabla_{[A_1 }^{0'}\nabla_{ A_2}^{0'}  \nabla_{A_3 }^{1'}   f_{A_4\ldots A_{m+2}]  } - \nabla_{[A_1 }^{0'}\nabla_{ A_2}^{1'}
\nabla_{A_3 }^{0'}
f_{A_4\ldots A_{k+3}]  }+2
 \Lambda_{[A_2A_3} \nabla_{ A_1 }^{0'}    f_{A_4\ldots A_{k+3}]  } \\&
=2\nabla_{[[A_1 }^{0'}\nabla_{ A_2]}^{0'}  \nabla_{A_3 }^{1'}   f_{A_4\ldots A_{m+2}]  } + \nabla_{[A_1 }^{0'} R_{\phantom{ 1'0' }  A_2
A_3 A_j   }^{1'0'\phantom{  A_1 A_2A_j }D  }
f_{A_4\ldots |D| \ldots A_{k+3}]  }+2
 \Lambda_{[A_2A_3} \nabla_{ A_1 }^{0'}    f_{A_4\ldots A_{k+3}]  }
  \end{split}\end{equation}
by raising the index $A'$ and  applying (\ref{eq:change-order}) to $\nabla_{ A_2}^{1'}  \nabla_{A_3 }^{0'}$. The second term in the
right hand side vanishes by (\ref{eq:R-A_1' A_2'-0}), while the first term equals to
 \begin{equation*}\begin{split}     R_{  [A_1 A_2 \phantom{ 1'0'}|D'|   }^{\phantom{[ A_2 A_3  } 0'0'\phantom{ |A'| }1' }
 \nabla_{A_3}^{D'} f_{A_4\ldots A_{k+2}]  }- R_{\phantom{ 1'0'}  [A_1
A_2A_j    }^{ 0'0'\phantom{ A_2 A_3 }\phantom{ A_j }D }  \nabla_{A_3}^{1'}f_{A_4\ldots |D|\ldots  A_{k+2}]  } =-2 \Lambda_{ [A_1
A_2}\nabla_{A_3}^{0'}
f_{A_4\ldots A_{k+2}]  },
 \end{split}
 \end{equation*} by using Corollary \ref{lem:R-upper}  again.
 So $( \mathscr D_{k+2 }^0  \Delta  f)_{ \ldots   }^{B'} $ vanishes for $B'=0'$. It is similar for $B'=1'$. If $k=0$, we can obtain the
 result by using vanishing of torsions.

 The  ellipiticity will be proved in Subsection 3.2.   \hskip 80mm $\Box$

Consider conformal transformation
\begin{equation}\label{eq:conformal-transformation}
  \widetilde{\epsilon}_{A_1\ldots A_{2n}}:=\Omega\epsilon_{A_1\ldots A_{2n}},\qquad \widetilde{\varepsilon}_{A '
  B'}:=\Omega{\varepsilon}_{A '
 B '}.
\end{equation}Fix a two-component  local quaternionic  frame $\{Z_{AA'}\}$ with respect to a volume element $vol$ in (\ref{eq:vol}) and
denote
$
   \Upsilon_{AA'}:=\Omega^{-1}Z_{AA'}\Omega.
$
  Under the conformal transformation
(\ref{eq:conformal-transformation}),
define a new connection $\widetilde{\nabla}$
on the bundles $E^*$ and $H^*$ by
\begin{equation}\label{eq:connection-change}\begin{split}
   \widetilde{\nabla}_{AA'}  f_B&= {\nabla}_{AA'}   f_B-\Theta_{A'AB}^{\phantom{A'AB}D}  f_D,\qquad
   \quad\Theta_{A'AB}^{\phantom{A'AB}D}=\delta_{ A }^{\phantom{ A }D} \Upsilon_{ B A' } ,\\ \widetilde{\nabla}_{AA'}  f_{B'}& =
   {\nabla}_{AA'}
     f_{B'}-\Theta_{AA'B'}^{\phantom{AA'B'}D'}  f_{D'},\qquad \Theta_{AA'B'}^{\phantom{AA'B'}D'} =\delta_{ A' }^{\phantom{ A'
  }D' } \Upsilon_{ A B ' } . \end{split}
\end{equation}
 Then it is a quaternionic connection for the unimodular  quaternionic   structure with respect to the volume $ \Omega^{2n+2}vol $.
 The curvatures of the unimodular   quaternionic connection $\widetilde{\nabla}_{AA'} $ satisfy
  \begin{equation}\label{eq:curvature-conformal}\begin{split}&
     \Omega\widetilde{\Lambda}_{AB }=\Lambda_{AB }+ \frac 12\left(  \nabla_{A'[A}   \Upsilon_{ B] }^{A'}+\Upsilon_{A'[A}  \Upsilon_{B]
     }^{A'} \right),  \qquad
     \Omega \widetilde\Psi_{ AB C   }^{\phantom{AB C }     D }= \Psi_{ AB C   }^{\phantom{AB C }     D },\\&
   \widetilde{\Phi}_{AB A'B' }= \Phi_{AB A'B' }- \nabla_{(A|(A'}    \Upsilon_{B')|B ) } +\Upsilon_{(A|(A'}   \Upsilon_{B')|B) } .
 \end{split} \end{equation}

\begin{prop} \label{prop:D-conformal} The operators associated to the unimodular   quaternionic connection $\widetilde{\nabla}_{AA'} $
are conformal covariant
in the following sense:
\begin{equation}\label{eq:D-conformal} \begin{split}\widetilde{\mathscr D}_{q}^{p} (\Omega^{-q}  f)&=
  \Omega^{-q-1}{\mathscr D}_{q}^{p}  f,\hskip 2mm\qquad {\rm for}\quad f\in \Gamma \left(  \Lambda^q {E}^*\otimes\odot^{p
}{H} \right),\\
  \widetilde{\mathscr D}_{q,p } (\Omega^{-q-1}  f)&=  \Omega^{-q-2}{\mathscr D}_{q, p}  f, \qquad {\rm for}\quad f\in \Gamma \left(
  \Lambda^q
  {E}^*\otimes\odot^{p
}{H}^*\right), \\
     \widetilde{\Delta}  (\Omega^{-q-1}  f)&=\Omega^{-q-2} {\Delta}    f ,\qquad\quad {\rm for}\quad f\in \Gamma \left(  \Lambda^q {E}^*
     \right).
     \end{split}\end{equation}
\end{prop}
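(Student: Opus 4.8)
All three identities follow from one mechanism, so I would organise the proof around it. Substituting the definition (\ref{eq:connection-change}) of $\widetilde\nabla$, one differentiates the conformal factor through the identity $\nabla_{AA'}\Omega=\Omega\,\Upsilon_{AA'}$, whence $\nabla_{AA'}\Omega^{-m}=-m\,\Omega^{-m}\Upsilon_{AA'}$. The essential observation is that the tilded operators raise primed indices with $\widetilde\varepsilon$, and since $\widetilde\varepsilon_{A'B'}=\Omega\varepsilon_{A'B'}$ its inverse obeys $\widetilde\varepsilon^{A'B'}=\Omega^{-1}\varepsilon^{A'B'}$; this single factor $\Omega^{-1}$ is exactly the drop of one unit in conformal weight. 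Consequently the term in which the derivatives act on $f$ alone, with all powers of $\Omega$ pulled out, reproduces the right-hand side (its principal part, in the case of the Baston operator), and the whole problem is to show that every remaining term, each carrying at least one factor of $\Upsilon_{AA'}$, cancels. I write $\Upsilon_A{}^{A'}:=\Upsilon_{AB'}\varepsilon^{B'A'}$ throughout.

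I would dispose of the two first-order operators first. For $\mathscr D_{q,p}$ acting on $\Omega^{-q-1}f$, with $f$ a section of $\Lambda^q E^*\otimes\odot^p H^*$, the $\Upsilon$-terms, after raising the summed primed index and antisymmetrising over $[A_1\cdots A_{q+1}]$, come from three sources and are all proportional to the single pattern $P:=\Upsilon_{[A_1}{}^{A_1'}f_{A_2\cdots A_{q+1}]A_1'A_2'\cdots A_p'}$: the differentiated weight gives $-(q+1)P$; each of the $q$ corrections on an unprimed index of $f$ gives $+P$, because relabelling the derivative index into the vacated slot and antisymmetrising produces a transposition sign; and the correction on the contracted primed slot gives a further $+P$. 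The $p-1$ corrections on the free primed slots vanish, since $f$ is symmetric in its primed indices while these are contracted against the antisymmetric $\varepsilon$. The sum $-(q+1)P+qP+P=0$ gives the second identity. The computation for $\mathscr D_{q}^{p}$, with $f$ a section of $\Lambda^q E^*\otimes\odot^p H$, is the same, except that there is no contracted primed slot: here the weight gives $-q$ copies of the analogous pattern $Q$ (the raised index now symmetrised among the upper primed indices), the $q$ unprimed corrections give $+q$ copies, and each correction on an upper primed index vanishes because it yields an $\varepsilon$ antisymmetric in two indices that are being symmetrised. This gives the first identity.

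The Baston operator is the real work. It is second order and contains the curvature $\Lambda_{AB}$, which itself rescales inhomogeneously by (\ref{eq:curvature-conformal}). I would expand $\widetilde\nabla_{A'[A_1}\widetilde\nabla_{A_2}^{A'}(\Omega^{-q-1}f)_{A_3\cdots A_{q+2}]}$ by Leibniz, factoring out $\Omega^{-q-2}$ (one power $\Omega^{-q-1}$ from the weight, one power $\Omega^{-1}$ from raising with $\widetilde\varepsilon$), and sort the terms by their order in $\Upsilon$ and by whether the surviving derivative falls on $f$ or on $\Upsilon$. The $\Upsilon$-free term is the wanted $\Omega^{-q-2}\nabla_{A'[A_1}\nabla_{A_2}^{A'}f_{A_3\cdots A_{q+2}]}$. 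The terms of type $\Upsilon\cdot\nabla f$, which arise both from the outer correction hitting the inner $\nabla f$ and from the outer derivative hitting the inner $\Upsilon f$, must cancel among themselves; I expect this to follow from the same antisymmetrisation combinatorics that balanced the weight against the unprimed corrections in the first-order case. The surviving terms are of type $(\nabla\Upsilon)f$ and $\Upsilon\Upsilon f$, and these should add up precisely to $-\Omega^{-q-2}(\nabla_{A'[A_1}\Upsilon_{A_2}^{A'}+\Upsilon_{A'[A_1}\Upsilon_{A_2}^{A'})f_{A_3\cdots A_{q+2}]}$. Feeding the transformation law (\ref{eq:curvature-conformal}) into $2\widetilde\Lambda_{[A_1A_2}(\Omega^{-q-1}f)_{A_3\cdots A_{q+2}]}$ produces exactly the opposite of this quantity, together with the desired $2\Omega^{-q-2}\Lambda_{[A_1A_2}f_{A_3\cdots A_{q+2}]}$, so the two anomalies annihilate and the third identity follows.

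The step I expect to be genuinely delicate is this last matching. One must keep careful track of three distinct sets of indices (the two derivative indices, the single contracted primed index, and the antisymmetrised block carried by $f$), verify that the $\Upsilon\cdot\nabla f$ terms really do cancel rather than leaving a residue, and check that the coefficients of $\nabla_{A'[A_1}\Upsilon_{A_2}^{A'}$ and of $\Upsilon_{A'[A_1}\Upsilon_{A_2}^{A'}$ emerging from the second-order expansion agree exactly with the $\tfrac12$-normalised terms in (\ref{eq:curvature-conformal}); here one will want to use that $\Upsilon_{AA'}=\nabla_{AA'}\log\Omega$ is a logarithmic gradient, so that the symmetric part of $\nabla\Upsilon$ is unobstructed and only the intended antisymmetric combination survives. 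This coefficient bookkeeping, rather than any conceptual difficulty, is the crux of the proposition.
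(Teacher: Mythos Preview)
Your proposal is correct and follows essentially the same route as the paper's proof: expand $\widetilde\nabla$ via (\ref{eq:connection-change}), pull out the factor $\Omega^{-1}$ coming from $\widetilde\varepsilon^{A'B'}=\Omega^{-1}\varepsilon^{A'B'}$, and then balance the $\Upsilon$-terms by antisymmetrisation combinatorics (for the first-order operators) and against the inhomogeneous part of the $\Lambda$-transformation in (\ref{eq:curvature-conformal}) (for the Baston operator). Your counting $-(q+1)+q+1=0$ for $\mathscr D_{q,p}$ and $-q+q=0$ for $\mathscr D_q^p$ matches the paper exactly, and your description of the second-order case---that the $\Upsilon\cdot\nabla f$ terms cancel among themselves while the $(\nabla\Upsilon)f$ and $\Upsilon\Upsilon f$ residues are absorbed by $2\widetilde\Lambda$---is precisely what the paper verifies term by term.
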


(\ref{eq:D-conformal}) holds with respect to fixed frame and coframe.  The weight factors in (\ref{eq:D-conformal}) coincide with that
obtained by the representation theory in Proposition 10 in  Baston  \cite{Ba}.
  See Penrose and   Rindler
   \cite{PR1} section 5.6-5.7 for conformal transformation of spin $  k/ 2$  massless field operator over $4$-dimensional manifolds.
   The conformal
   change of connections  for QCF's  was  introduced by Bailey   and    Eastwood  (cf. section 2.2 of \cite{BaiE}). See
    the Appendix  for a detailed proof.

\subsection{The   ellipticity of the $k$-Cauchy-Fueter complex}

Locally  for a matrix-valued differential operator $L=\sum_{j=0}^m\sum_{|\nu|=j}  A_\nu \partial_{x_{\nu_1}}\cdots\partial_{x_{\nu_j}}$
of order $m$ on a $N$-dimensional manifold $ M$, its {\it symbol} at point
$p$ is
\begin{equation*}
   \sigma(L)(\xi)=\sum_{|\nu|= m} A_\nu(p) \mathbf{i}\xi_{ {\nu_1}}\cdots \mathbf{i}\xi_{ {\nu_j}},\qquad \xi\in \mathbb{R}^N.
\end{equation*}
$L$ is called {\it elliptic} if $ \sigma(L)(\xi)$ invertible for any $0\neq\xi\in \mathbb{R}^N$.
A differential complex is called {\it elliptic} if the associated symbol sequence is exact at each point $p$. It is well known that the
ellipticity of a differential operator or a differential complex  is independent of the choice of the local coordinate charts \cite{We}.
For a fixed point $p\in M$, let $\sigma(X_j)=\mathbf{i}\xi_j$ and \begin{equation}\label{2.3}  (\xi_{AA'}):=\left(
                                      \begin{array}{ll}
                                      \xi_{1}+\textbf{i}\xi_{2} & -\xi_{3}-\textbf{i}\xi_{4} \\
                                        \xi_{3}-\textbf{i}\xi_{4} & \hskip 3mm\xi_{1}-\textbf{i}\xi_{2} \\
                                       \hskip 3mm \vdots&\hskip 3mm\hskip 3mm\vdots\\
                                                                                  \xi_{4n- 3}+\textbf{i}\xi_{4n-2} &
                                                                                  -\xi_{4n-1}-\textbf{i}\xi_{4n } \\
                                        \xi_{4n-1}-\textbf{i}\xi_{4n } & \hskip 3mm\xi_{4n-3}-\textbf{i}\xi_{4n-2} \\
                                      \end{array}
                                    \right).
\end{equation}In particular, $\sigma (Z_{AA'} )(\xi)= \frac {\mathbf{i}}{\sqrt 2} \xi_{AA'} $ by $ Z_{AA'}$ in (\ref{eq:k-CF}).
For  fixed $k$, we use notations
\begin{equation*}
  \sigma_j(\xi ):=\frac {\sqrt 2}{\mathbf{i}}\sigma\left(D_j^{(k)}\right)(\xi), \mbox{ for } j\neq k,\qquad \mbox{ and } \quad
  \sigma_k(\xi ):=\frac 2{\mathbf{i}^2}\sigma\left(D_k^{(k)}\right)(\xi).
\end{equation*}
The ellipticity of the complex (\ref{eq:quaternionic-complex-diff}) is given by the following exact sequence of the associated symbols,
which can be easily proved by using elementary linear algebra.
\begin{prop}\label{prop:elliptic}
For any $0\neq \xi\in \mathbb{R}^{4n}$,
\begin{equation}\begin{split}
0\longrightarrow & \odot^{k }\mathbb{C}^2
\xrightarrow{\sigma_0(\xi )}\Lambda^1 \mathbb{C}^{2n} \otimes \odot^{k-1
}\mathbb{C}^2
\xrightarrow{\sigma_1(\xi )}\cdots\longrightarrow   \Lambda^k
\mathbb{C}^{2n}\xrightarrow{\sigma_k(\xi )}
\Lambda^{k+2} \mathbb{C}^{2n} \\& \longrightarrow\cdots
\xrightarrow{\sigma_{2n-2} (\xi ) }\Lambda^{2n}
\mathbb{C}^{2n}\otimes
\odot^{2n-k-2}\mathbb{C}^2\longrightarrow
0\end{split}\label{eq:symbol}
\end{equation} is exact. Namely, $\ker \sigma_0(\xi )=\{0\}$,
\begin{equation} \ker
\sigma_j(\xi )={\rm range}\, \sigma_{j-1} (\xi )
\label{eq:elliptic-symbol}
\end{equation} for  $
j=1,\ldots,2n-2$, and $\sigma_{2n-2} (\xi ) $ is surjective.
\end{prop}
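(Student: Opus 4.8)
The statement is pointwise and purely algebraic, so I fix $p\in M$ and $0\neq\xi\in\mathbb R^{4n}$ and regard each $\sigma_j(\xi)$ as a linear map of finite-dimensional spaces. The normalising constants $\frac{\sqrt2}{\mathbf i}$ and $\frac2{\mathbf i^2}$ are chosen so that, since $\sigma(Z_{AA'})(\xi)=\frac{\mathbf i}{\sqrt2}\xi_{AA'}$, the symbols are the bare algebraic operations read off from the operators in Theorem \ref{thm:k-Cauchy-Fueter}: for $j<k$, $\sigma_j(\xi)f=\xi_{[A_1}^{A_1'}f_{A_2\cdots]A_1'\cdots}$; for $j>k$, $\sigma_j(\xi)f=\xi_{[A_1}^{(A_1'}f_{A_2\cdots]}^{\cdots)}$; and the Baston symbol is $\sigma_k(\xi)f=\xi_{A'[A_1}\xi_{A_2}^{A'}f_{A_3\cdots A_{k+2}]}$, with primed indices raised by $\varepsilon^{A'B'}$. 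Thus everything is controlled by the single $2n\times2$ complex matrix $(\xi_{AA'})$ of (\ref{2.3}).

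First I would compute its rank. The $l$-th $2\times2$ block of $(\xi_{AA'})$ in (\ref{2.3}) has determinant $\xi_{4l+1}^2+\xi_{4l+2}^2+\xi_{4l+3}^2+\xi_{4l+4}^2$, which is strictly positive for at least one $l$ whenever $\xi\neq0$; hence $(\xi_{AA'})$ has rank exactly $2$, i.e.\ its two columns are linearly independent vectors of $\mathbb C^{2n}$ (equivalently, $\tau$ of a non-zero quaternion is invertible). Next I would use naturality to normalise $\xi$. None of the operators $\mathscr D_{q,p},\mathscr D_q^p,\triangle$ involves the unprimed volume form $\epsilon$, so each symbol is built only from antisymmetrisation of unprimed indices and $\varepsilon$-contraction of primed indices; consequently the whole symbol complex depends on $(\xi_{AA'})$ only through its $\mathrm{GL}(2n,\mathbb C)$-orbit (the group acting on unprimed indices). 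Since all rank-$2$ matrices form a single orbit, I may assume the standard form $\xi_{00'}=\xi_{11'}=1$ with all other entries $0$; then $\xi_0^{1'}=-1,\ \xi_1^{0'}=1$ are the only nonzero raised components, and a short computation gives $\xi_{A'[A_1}\xi_{A_2]}^{A'}=e^0\wedge e^1$, the area form of the plane $\Pi:=\langle e_0,e_1\rangle$.

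With $\xi$ in this form every symbol merely wedges the new unprimed index with $e_0$ or $e_1$ (for the first- and second-half operators) or with $e^0\wedge e^1$ (for Baston), while shuffling a primed index; in particular each symbol acts as the identity on the exterior factor $\Lambda^\bullet V'$ of the splitting $\Lambda^\bullet\mathbb C^{2n}=\Lambda^\bullet\Pi\otimes\Lambda^\bullet V'$, where $V'=\langle e_2,\dots,e_{2n-1}\rangle$, and it preserves the exterior degree $r$ in $V'$. Hence the symbol complex (\ref{eq:symbol}) is the direct sum over $r=0,\dots,2n-2$ of a short complex $C^\bullet_r$ supported on $\Lambda^\bullet\Pi\otimes\odot^\bullet\mathbb C^2$, tensored with the fixed space $\Lambda^r V'$. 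Because $\Lambda^r V'$ is a (flat) vector space, exactness of (\ref{eq:symbol}) is equivalent to exactness of each $C^\bullet_r$.

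Each $C^\bullet_r$ lives on the two-dimensional $\Pi$ (whose exterior powers occupy degrees $0,1,2$) together with symmetric powers of the primed $\mathbb C^2$, so it is a short, completely explicit sequence; its exactness — including $\ker\sigma_0=\{0\}$ at the left end and surjectivity of $\sigma_{2n-2}$ at the right — is the promised finite linear-algebra check, carried out for instance by exhibiting a contracting homotopy built from the two covectors $\xi_{\cdot0'},\xi_{\cdot1'}$ spanning $\Pi$, or directly from Clebsch--Gordan on $\odot^\bullet\mathbb C^2$. I expect the only delicate point to be the three sequences indexed by $r=k-1,k,k+1$, which straddle the skipped exterior degree $k+1$ and contain the second-order Baston symbol realised as wedging with $e^0\wedge e^1$: here one must verify that the kernel of this area-form wedge matches the image of the preceding first-order (contraction) differential and that its cokernel matches the kernel of the following (raising) differential, so that the first half, the Baston jump, and the second half splice into a single exact sequence.
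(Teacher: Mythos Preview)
Your approach is correct and takes a genuinely different route from the paper's. Both proofs begin by normalising $\xi$ via a linear action on unprimed indices---the paper through $\tau(M)\in\mathrm{GL}(n,\mathbb H)$, you through the full $\mathrm{GL}(2n,\mathbb C)$, which is legitimate precisely because (as you note) none of the symbols involves $\epsilon$. After that the arguments diverge. The paper computes $\ker\sigma_j(\xi)$ by hand for the standard $\xi$: it writes out the three kinds of components (\ref{eq:ker-i})--(\ref{eq:ker-iii}), reads off the kernel description (\ref{eq:ker-iff}), and then constructs an explicit preimage $\Theta$ case by case ((\ref{eq:Theta1})--(\ref{eq:Theta2}) for $j<k$, similarly for $j=k$, with $j>k$ left to the reader). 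Your route instead exploits the splitting $\mathbb C^{2n}=\Pi\oplus V'$ to factor the whole symbol complex as $\bigoplus_{r}C_r^\bullet\otimes\Lambda^rV'$, reducing to two--dimensional building blocks. This is more structural and in fact explains the shape of the paper's kernel conditions: the vanishing of the ``all indices $\geq 2$'' components in (\ref{eq:ker-iff}) is exactly your statement that $\Pi$-degree $0$ injects, and the trace condition is the middle exactness of $C_r^\bullet$. The boundary cases $r\in\{k-1,k,k+1\}$ you flag are actually the easiest once isolated---each $C_r^\bullet$ degenerates to a two-term isomorphism (for $r=k$ this is just that wedging with $e^0\wedge e^1$ is an isomorphism $\Lambda^0\Pi\to\Lambda^2\Pi$). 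What you still owe is the verification of the generic $C_r^\bullet$; your Clebsch--Gordan suggestion works once $\xi$ is used to identify $\Pi$ with the primed $\mathbb C^2$, whereupon the first-half block becomes the standard short exact sequence $0\to\odot^m\to\mathbb C^2\otimes\odot^{m-1}\to\odot^{m-2}\to 0$ and the second-half block its dual. The paper's concrete preimages are useful if one wants explicit homotopies; your decomposition makes the mechanism of exactness transparent.
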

\begin{proof}{\it Case 1. $j<k$}. We need to show that \begin{equation}\label{eq:short-exact}
  \Lambda^{j-1} \mathbb{C}^{2n} \otimes \odot^{p+1
}\mathbb{C}^2
\xrightarrow{\sigma_{j-1}(\xi )}\Lambda^{j } \mathbb{C}^{2n} \otimes \odot^{p
}\mathbb{C}^2
\xrightarrow{\sigma_{j }(\xi )}\Lambda^{j+1} \mathbb{C}^{2n} \otimes \odot^{p-1
}\mathbb{C}^2
\end{equation}is exact
  for $p=k-j$, where
the linear mapping
$
    \sigma_j(\xi)
$
is given by
\begin{equation}\label{eq:sigma-xi}\begin{split}
[ \sigma_j(\xi)  \vartheta ]_{A_0\ldots    A_{j  }  A'_2 \ldots A'_{p  }  }&= \xi_{[A_0}^{A'}\vartheta_{ A_1\ldots
A_{j  } ]A'A'_2 \ldots A'_{p  }   } =\frac 1{j+1}\sum_{s=0}^{j }(-1)^{s }\xi_{A_s}^{A'}\vartheta_{  \ldots   { A}_0\ldots
  A'A'_2 \ldots A'_{p  }   } ,
\end{split}\end{equation} for $\vartheta \in \Lambda^j \mathbb{C}^{2n}\otimes  \odot^{p
}\mathbb{C}^2$, by definitions of symbols and antisymmetrisation. Then
\begin{equation*}
   [ \sigma_{j }(\xi)\circ \sigma_{j-1}(\xi)  \vartheta ]_{A_0\ldots    A_{j  }  A'_2 \ldots A'_{p  }  }=\xi_{[[A_0}^{A '}\xi_{
   A_1]}^{B'}\vartheta_{  \ldots
A_{j  } ]B'A 'A'_2 \ldots A'_{p  }   }=0
\end{equation*}
by $\xi_{ [A_0}^{(A '}\xi_{ A_1]}^{B')}=0$  and $\vartheta$  symmetric in the primed indices.

  We can choose
a  transformation $M\in  {\rm GL}( n,\mathbb{H})$ such that its complexification  matrix $\tau(M) \in {\rm GL}( 2n,\mathbb{C})$
transforming $\xi:=\left(\begin{array}{c}\xi_{00'}\\\vdots\\
\xi_{(2n-1)0'}  \end{array}                                    \right)\neq 0$ to  $\widetilde{\xi}:=\left(\begin{array}{c}1\\0\\\vdots
\end{array}
\right)$. Note that $\left(\begin{array}{c}\xi_{01'}\\\vdots\\
\xi_{(2n-1)1'}  \end{array}                                    \right) = - {J}\left(\begin{array}{c}\overline{\xi_{00'}}\\\vdots\\
\overline{\xi_{(2n-1)0'} } \end{array}                                    \right)$ by (\ref{2.3}).  It follows from $ \tau(M) {J}= {J}
\overline{\tau(M)}$  in (\ref{2.231}) that
\begin{equation}\label{eq:M-xi}
\left  (\widetilde{\xi}_{AA'}\right)= \tau(M) (\xi_{AA'})=\left(\begin{array}{cc}
                                      1 &0 \\
                                        0 & 1 \\
                                                                               0 &0 \\
                                                                                \vdots&\vdots
                                      \end{array}
                                    \right).
\end{equation}
Then $\widetilde{\xi}_{A}^{A'}=\tau(M) _{A
   }^{\phantom{A }  B}{\xi}_{B}^{A'}$   by raising indices. Set
$
   \widetilde{\vartheta}_{A_1\ldots  A_{j  }  A_1'\ldots}:=\prod_{l=1}^j \tau(M) _{A_l
   }^{\phantom{A_1} B_l }\cdot \vartheta_{ B_1\ldots   B_{j  }A_1'\ldots} .
$  It is direct to see that
 \begin{equation}\label{eq:sigma-j} \begin{split}
  [ \sigma_j(\widetilde{\xi})  \widetilde{\vartheta}]_{A_0\ldots    A_{j  }  A'_2 \ldots A'_{p  }}=&
 \frac 1{j+1} \sum_{s= 0}^{j }(-1)^{s }\widetilde{\xi}_{A_s}^{A'}\widetilde{\vartheta}_{ A_1\ldots    { A}_0\ldots A_{j  } A'A'_2 \ldots
 A'_{p  }   }\\=&
 \frac 1{j+1} \sum_{s=0}^{j } (-1)^{s } \tau(M) _{ A_s}^{\phantom{A_1}B_s }  \xi_{B_s}^{A'}\prod_{l\neq s}\tau(M) _{
 A_l}^{\phantom{A_1}B_l }  \vartheta_{
 B_1\ldots
  B_0\ldots B_{j  }A'A'_2 \ldots A'_{p  }   }
    \\=& {\prod_{l=0 }^j\tau(M) _{ A_l}^{\phantom{A_1}B_l}} \cdot [ \sigma_j( {\xi})   {\vartheta}]_{
    B_0\ldots B_{j  }A'A'_2 \ldots A'_{p  }   }
   =\widetilde{ [\sigma_j( {\xi})  {\vartheta}]}_{A_0\ldots    A_{j  }  A'_2 \ldots A'_{p  }}
\end{split}\end{equation}
So $\sigma_j( {\xi})  {\vartheta}=0$  if and only if  $\sigma_j(\widetilde{\xi})  \widetilde{\vartheta}=0$. Suppose that the
exactness is proved for $\widetilde{\xi}$ in (\ref{eq:M-xi}). Then there exists $\widetilde{\kappa}\in \Lambda^{j-1} \mathbb{C}^{2n}
\otimes \odot^{p+1
}\mathbb{C}^2$ such that $\sigma_{j-1}(\widetilde{\xi}) \widetilde{\kappa}=\widetilde{\vartheta}$. It follows that
$
   \sigma_{j-1}( {\xi})  {\kappa}= {\vartheta}
$ for $\kappa=\tau(M) ^{-1}\widetilde{\kappa}$ by
(\ref{eq:sigma-j}). So we only need to check the exactness of (\ref{eq:short-exact}) for $\xi $ given by   (\ref{eq:M-xi}).
By raising indices, we
have\begin{equation}\label{eq:xi-lift}
  \left (\xi_{A}^{A'}\right)=\left(\begin{array}{cc}
                                      0 & -1 \\
                                       1 & 0 \\
                                                                               0 &0 \\
                                                                                \vdots&\vdots
                                      \end{array}
                                    \right).
\end{equation}

For $\vartheta\in \Lambda^{j } \mathbb{C}^{2n}\otimes \odot^{p
}\mathbb{C}^2 $,
 by definition (\ref{eq:sigma-xi}) of $\sigma_j( {\xi})  {\vartheta}$ and $\xi_{A}^{A'} $ in (\ref{eq:xi-lift}), we have (i)
\begin{equation}\label{eq:ker-i}
 [ \sigma_j(\xi)  \vartheta ]_{A_0\ldots   A_{j   } A'_2 \ldots A'_{p  } }=0,\qquad {\rm for } \quad   2\leq A_0,\cdots  , A_{j  } ,
\end{equation}
since ${\xi}_{A_j}^{A'}=0$ for such $A_j$'s; (ii) for $2\leq A_1,\cdots  , A_{j  }$,
\begin{equation}\label{eq:ker-ii}\begin{split}&
 [ \sigma_j(\xi)  \vartheta ]_{0A_1\ldots   A_{j  }    A'_2 \ldots A'_{p  } } =-\frac 1{j+1}
 {\vartheta}_{ A_1\ldots   A_{j  }  1'  A'_2\ldots A'_{p  } },
\quad
 [ \sigma_j(\xi)  \vartheta ]_{1A_1\ldots   A_{j  }    A'_2 \ldots A'_{p  } } = \frac 1{j+1}
 {\vartheta}_{ A_1\ldots   A_{j  }  0 ' A'_2 \ldots A'_{p  } };
\end{split} \end{equation}
(iii) for $2\leq A_2,\cdots  , A_{j  }$,
\begin{equation}\label{eq:ker-iii}
 [ \sigma_j(\xi)  \vartheta ]_{01A_2\ldots   A_{j  }    A'_2 \ldots A'_{p  } } =-\frac 1{j+1}(
 {\vartheta}_{1 A_2\ldots   A_{j  } 1' A'_2 \ldots A'_{p  } }+{\vartheta}_{0 A_2\ldots   A_{j  }  0' A'_2 \ldots A'_{p  } }).
 \end{equation}
Therefore   $\vartheta\in \ker \sigma_j( {\xi})  $ for $j\geq 1$ if and only if
 \begin{equation}\label{eq:ker-iff}\begin{split}&
    {\vartheta}_{ A_1\ldots   A_{j  }  A_1'  A'_2 \ldots A'_{p  } } =0, \qquad
     {\vartheta}_{1 A_2\ldots   A_{j  } 1' A'_2 \ldots A'_{p  } }=-{\vartheta}_{0 A_2\ldots   A_{j  } 0' A'_2 \ldots A'_{p  } },
\end{split} \end{equation} for any $A_1',A'_2, \ldots, A'_{p  }=0',1' $ and $2\leq  A_1<\cdots  < A_{j  } $.  While for $j=0$,
$\vartheta\in\ker \sigma_0( {\xi}) $ if and only if ${\vartheta}_{    A_1'  A'_2 \ldots A'_{p  } } =0$. So $\vartheta\in\ker \sigma_0(
{\xi})=\{0\}$.

 To show  the surjectivity    of $\sigma_{j-1}( {\xi})$, we need to find an element     $\Theta\in \Lambda^{j-1} \mathbb{C}^{2n}
 \otimes
 \odot^{p+1
}\mathbb{C}^2$ such that
$
    \sigma_{j-1}( {\xi}) \Theta=\vartheta
$
for $\vartheta$ satisfying (\ref{eq:ker-iff}). We define such an  element $\Theta $ by
\begin{equation}\label{eq:Theta1}\begin{split}
 & \Theta_{A_2\ldots   A_{j  } 0' 1'A'_2 \ldots A'_{p  }} := j{\vartheta}_{1 A_2\ldots   A_{j  } 1' A'_2 \ldots A'_{p  } }
 =-j{\vartheta}_{0 A_2\ldots
   A_{j  } 0' A'_2 \ldots A'_{p  } },\\&
   \Theta_{  A_2\ldots   A_{j  }1'1'1'\ldots 1' }=- j\theta_{0  A_2\ldots   A_{j  } 1'1'\ldots 1' },\qquad \Theta_{  A_2\ldots   A_{j
   } 0'0'0' \ldots 0' }=j\theta_{0  A_2\ldots   A_{j   }  0'0' \ldots 0' },
\end{split} \end{equation} for $2\leq A_2,\cdots  , A_{j   }$, and
\begin{equation}\label{eq:Theta2}\begin{split}
\Theta_{1 A_3\ldots   A_{j   } 1' A'_1 \ldots A'_{p  } }+\Theta_{0 A_3\ldots   A_{j   }  0' A'_1 \ldots A'_{p   } }=-j\vartheta_{
01A_3\ldots   A_{j
  }  A'_1  A'_2 \ldots A'_{p    } },
\end{split} \end{equation}
and all other kind of  entries vanish. Obviously, there exists an element $\Theta$ satisfying conditions
(\ref{eq:Theta1})-(\ref{eq:Theta2}),  but such $\Theta$ is not
unique.
Then by definition (\ref{eq:sigma-xi}) for the  symbol  $\sigma_{j-1}( {\xi}) $ again, as in (\ref{eq:ker-i})-(\ref{eq:ker-iii}),  it is
easy to see that for any
$2\leq A_1,\cdots , A_{j   }$, we
have
$
[\sigma_{j-1}( {\xi})  \Theta]_{ A_1\ldots   A_{j   } A'_1  A'_2 \ldots A'_{p   } } = 0,
$
and
\begin{equation*}\begin{split}&
[\sigma_{j-1}( {\xi}) \Theta]_{ 0A_2\ldots   A_{j  } 0'   A'_2 \ldots A'_{p   } } = - \frac 1{j }\Theta_{  A_2\ldots   A_{j } 1'0'
A'_2 \ldots
A'_{p
} }= \vartheta_{ 0A_2\ldots   A_{j  } 0'   A'_2 \ldots A'_{p   } },\\&
[\sigma_{j-1}( {\xi}) \Theta]_{ 1A_2\ldots   A_{j  } 1'   A'_2 \ldots A'_{p   } } =\frac 1{j   }\Theta_{  A_2\ldots   A_{j  } 0'1'
A'_2 \ldots
A'_{p
} }=\vartheta_{ 1A_2\ldots   A_{j   }1'    A'_2 \ldots A'_{p  } },
\end{split} \end{equation*} by (\ref{eq:Theta1}),
and
\begin{equation*}\begin{split}&
[\sigma_{j-1}( {\xi}) \Theta]_{ 0A_2\ldots   A_{j   }1'1'\ldots 1' } = -\frac 1{j   }\Theta_{  A_2\ldots   A_{j  }1'1'1'\ldots 1' }=
\theta_{0  A_2\ldots   A_{j  } 1'1'\ldots 1' },\\&
[\sigma_{j-1}( {\xi}) \Theta]_{ 1A_2\ldots   A_{j  } 0'0' \ldots 0' } =\frac 1{j   }\Theta_{  A_2\ldots   A_{j   } 0'0'0' \ldots 0'
}=\theta_{0  A_2\ldots   A_{j   }  0'0' \ldots 0' },
\end{split} \end{equation*}by (\ref{eq:Theta1}) again,
and
\begin{equation*}\begin{split}
[\sigma_{j-1}( {\xi}) \Theta]_{ 01A_3\ldots   A_{j   }    A'_1 \ldots A'_{p    } }&= -\frac 1{j }\left
(\Theta_{1 A_3\ldots   A_{j   } 1' A'_1 \ldots A'_{p  } }+\Theta_{0 A_3\ldots   A_{j  }  0' A'_1 \ldots A'_{p   } }\right) =\vartheta_{
01A_3\ldots
A_{j   }  A'_1  A'_2 \ldots A'_{p   } },
\end{split} \end{equation*}by (\ref{eq:Theta2}). Thus $\sigma_{j-1}( {\xi}) \Theta=\vartheta$, and so the sequence
(\ref{eq:short-exact})  is exact.
\vskip 3mm

{\it Case 2. $j=k $}. We need to show that
\begin{equation}\label{eq:short-exact2}
  \Lambda^{k-1} \mathbb{C}^{2n}  \otimes  \mathbb{C}^2
\xrightarrow{\sigma_{k-1}(\xi )}\Lambda^{k } \mathbb{C}^{2n}
\xrightarrow{\sigma_{k }(\xi )}\Lambda^{k+2} \mathbb{C}^{2n}
\end{equation}
 is exact.
For $\vartheta\in  \Lambda^{k} \mathbb{C}^{2n}$, we have
\begin{equation}\label{eq:sigma-k}
  [\sigma_k( {\xi})  {\vartheta}]_{ A_1\ldots   A_{k+2  }   } =\xi_{ A'[A_1}\xi_{A_2 }^{A'}\vartheta_{ A_3\ldots   A_{k+2 }]}.
\end{equation}$\sigma_k( {\xi}) \circ \sigma_{k-1}( {\xi})=0$ holds as before.  Without loss of generality, we can assume $\xi$ is given
by (\ref{eq:M-xi}) as in case $1$.
  We only need to consider unprimed indices with  $ A_1<\cdots  < A_{k+2  }$. Then for this $\xi $, we have
\begin{equation*}
   [\sigma_k( {\xi})  {\vartheta} ]_{ A_1\ldots   A_{k+2  }   } =\left\{
   \begin{array}{l} \frac 2{(k+2)(k+1)}{\vartheta}_{ A_3\ldots   A_{k+2  }   },\qquad {\rm if}\,   A_1= 0  ,  A_2= 1 ,\\
      0,\qquad \qquad \qquad\qquad \qquad{\rm otherwise},\\
   \end{array}\right.
\end{equation*}by $\xi_{ A'[A_1}\xi_{A_2] }^{A'}=0$ except for $\xi_{ A'[0}\xi_{1] }^{A'}=1$.
Therefore  $\ker \sigma_k( {\xi})$ consists of $\vartheta\in  \Lambda^{k} \mathbb{C}^{2n}$ with $ {\vartheta}_{  B_1\ldots    B_{k  }
}=0$ for $2\leq
  B_1,\cdots  ,  B_{ k    }$. On the other hand, for $\Theta\in  \Lambda^{k-1} \mathbb{C}^{2n}\otimes\mathbb{C}^{2 }$ and  $ 2\leq
  A_1,\cdots  , A_{k  } $, we have
 \begin{equation*}\begin{split} &[\sigma_{k-1}( {\xi})  \Theta]_{ A_1A_2\ldots   A_{k  }   }=0,\\
    &
    [\sigma_{k-1}( {\xi})  \Theta]_{ 0A_2\ldots   A_{k  } }=-\frac 1{k }{\Theta}_{  A_2\ldots   A_{k  } 1'  }  ,\qquad  [\sigma_{k-1}(
    {\xi})  \Theta]_{ 1A_2\ldots
    A_{k  } }= \frac 1{k }{\Theta}_{  A_2\ldots   A_{k  } 0'  } , \\&[\sigma_{k-1}( {\xi})  \Theta]_{ 01A_3\ldots   A_{k  } }=-\frac 1{k
    }\left({\Theta}_{ 1 A_3\ldots   A_{k  } 1'  }+{\Theta}_{0  A_3\ldots   A_{k  } 0'  }\right).
 \end{split} \end{equation*}
 Hence as in the case $1$, if we set
$
    \Theta_{  A_2\ldots   A_{k  } 1'  }:=-k\vartheta_{ 0A_2\ldots   A_{k  }   },$ $ \Theta_{  A_2\ldots   A_{k  } 0'
 }:=k\vartheta_{ 1A_2\ldots   A_{k  }   }
 $  and
\begin{equation*}\begin{split}
\Theta_{1 A_3\ldots   A_{k } 1'   }+\Theta_{0 A_3\ldots   A_{k  }  0'   }=-k\vartheta_{ 01A_3\ldots   A_{k }    },
\end{split} \end{equation*}  for $
 A_2, \cdots  , A_{ k    }\geq 2$,
and all other kind of  entries vanish,  then we get $ \sigma_{k-1}( {\xi})  \Theta=\vartheta$. So (\ref{eq:short-exact2})  is exact.

\vskip 3mm
{\it Case 3. $j\geq k+1$}. It is similar. We omit details.
 \end{proof}

 We  can consider the
{\it associated
Laplacian operators} as in the flat case \cite{Wa10}, some of which are of order $4$. This is because $D_k^{(k)}$'s are  of order $2$.
They are all self adjoint elliptic operators. Then by applying the standard theory of elliptic operators \cite{We}, we can obtain  the
following theorem.

 \begin{thm} \label{thm:BVP} Suppose that $M$ is a compact unimodular   quaternionic  manifold and is right conformally flat if
 $\dim_{\mathbb{R}}M=4$. Then we have the Hodge-type orthogonal  decomposition:
\begin{equation}\label{eq:Hodge-decomposition}
    \Gamma\left  (  \mathscr V_j^{(k)}\right)={\rm Image}\, D_{j-1}^{(k) }\oplus {\rm Image}\, D_j^{(k) *}\oplus  \mathscr H^j_{ (k)
    }(M),
\end{equation}where $\mathscr
H^j_{ (k) }(M)\cong  \{f \in   \Gamma    (  \mathscr V_j^{(k)} ); D_j^{(k) }f=0, D_{j-1}^{(k)* }f=0  \}\cong H^j_{ (k) }(M)
$
are  finite dimensional.
\end{thm}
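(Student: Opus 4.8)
The plan is to obtain (\ref{eq:Hodge-decomposition}) as the Hodge decomposition associated to an elliptic complex, the sole complication being that the Baston operator $D_k^{(k)}$ has order two while every other $D_j^{(k)}$ has order one. First I would fix arbitrary smooth Hermitian metrics on the bundles $\mathscr V_j^{(k)}$ and a smooth volume form on $M$ (for instance $vol$ in (\ref{eq:vol})); since $H^j_{(k)}(M)$ depends only on the complex and not on these auxiliary choices, any fixed data suffice. With the induced $L^2$ inner products $\langle\cdot,\cdot\rangle$ on the spaces $\Gamma(\mathscr V_j^{(k)})$ over the compact manifold $M$, I define the formal adjoints $D_j^{(k)*}$ by $\langle D_j^{(k)}f,g\rangle=\langle f,D_j^{(k)*}g\rangle$, and recall that $D_{j+1}^{(k)}D_j^{(k)}=0$ from Theorem \ref{thm:k-Cauchy-Fueter}.

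Next I would introduce the associated Laplacians, matching the orders of their two summands. At every position $j\notin\{k,k+1\}$ both adjacent operators are of the first order, so I take the usual second-order operator $\Delta_j:=D_{j-1}^{(k)}D_{j-1}^{(k)*}+D_j^{(k)*}D_j^{(k)}$. At the two positions where the order jumps I balance orders by squaring the first-order contribution, setting
\begin{equation*}
\Delta_k:=\left(D_{k-1}^{(k)}D_{k-1}^{(k)*}\right)^2+D_k^{(k)*}D_k^{(k)},\qquad
\Delta_{k+1}:=D_k^{(k)}D_k^{(k)*}+\left(D_{k+1}^{(k)*}D_{k+1}^{(k)}\right)^2,
\end{equation*}
both of order four (here $D_{-1}^{(k)}=0$ at the ends). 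Each $\Delta_j$ is formally self-adjoint and non-negative, and I would verify its ellipticity directly from the exactness of the symbol complex in Proposition \ref{prop:elliptic}: if $\sigma(\Delta_j)(\xi)v=0$ for some $\xi\neq0$, pairing with $v$ shows $\sigma(D_j^{(k)})(\xi)v=0$ together with $\sigma(D_{j-1}^{(k)})(\xi)^*v=0$ (for the fourth-order Laplacians one first obtains $\sigma(D_{j-1}^{(k)})(\xi)\sigma(D_{j-1}^{(k)})(\xi)^*v=0$, which upon pairing with $v$ still yields $\sigma(D_{j-1}^{(k)})(\xi)^*v=0$). Exactness gives $v\in\ker\sigma(D_j^{(k)})(\xi)=\mathrm{range}\,\sigma(D_{j-1}^{(k)})(\xi)$, so writing $v=\sigma(D_{j-1}^{(k)})(\xi)w$ we get $|v|^2=\langle\sigma(D_{j-1}^{(k)})(\xi)^*v,w\rangle=0$; hence $\sigma(\Delta_j)(\xi)$ is invertible and $\Delta_j$ is elliptic.

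Then I would apply the standard theory of self-adjoint elliptic operators on a compact manifold \cite{We}: each $\Delta_j$ has a finite-dimensional kernel $\mathscr H^j_{(k)}(M):=\ker\Delta_j$, a closed range, and the orthogonal splitting $\Gamma(\mathscr V_j^{(k)})=\ker\Delta_j\oplus\mathrm{Image}\,\Delta_j$. Integration by parts over $M$ identifies the kernel as the harmonic space $\{f:D_j^{(k)}f=0,\ D_{j-1}^{(k)*}f=0\}$, because $\langle\Delta_jf,f\rangle$ is a sum of squared norms whose vanishing forces $D_j^{(k)}f=0$ and $D_{j-1}^{(k)*}f=0$ (for $\Delta_k$ one first gets $D_{k-1}^{(k)}D_{k-1}^{(k)*}f=0$, whence $\|D_{k-1}^{(k)*}f\|^2=0$). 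Every element of $\mathrm{Image}\,\Delta_j$ lies in $\mathrm{Image}\,D_{j-1}^{(k)}+\mathrm{Image}\,D_j^{(k)*}$, for example
\begin{equation*}
\Delta_kg=D_{k-1}^{(k)}\left(D_{k-1}^{(k)*}D_{k-1}^{(k)}D_{k-1}^{(k)*}g\right)+D_k^{(k)*}\left(D_k^{(k)}g\right).
\end{equation*}
Since $\mathrm{Image}\,D_{j-1}^{(k)}$ and $\mathrm{Image}\,D_j^{(k)*}$ are each orthogonal to $\mathscr H^j_{(k)}(M)$ and to one another (using $D_j^{(k)}D_{j-1}^{(k)}=0$ and the adjoint relation), they are contained in $(\mathscr H^j_{(k)}(M))^\perp=\mathrm{Image}\,\Delta_j$; combined with the previous inclusion this forces $\mathrm{Image}\,\Delta_j=\mathrm{Image}\,D_{j-1}^{(k)}\oplus\mathrm{Image}\,D_j^{(k)*}$, and hence the decomposition (\ref{eq:Hodge-decomposition}).

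Finally, to identify $\mathscr H^j_{(k)}(M)$ with $H^j_{(k)}(M)$ I would show $\ker D_j^{(k)}=\mathscr H^j_{(k)}(M)\oplus\mathrm{Image}\,D_{j-1}^{(k)}$: decomposing $f\in\ker D_j^{(k)}$ by (\ref{eq:Hodge-decomposition}) and applying $D_j^{(k)}$ annihilates the harmonic and the $\mathrm{Image}\,D_{j-1}^{(k)}$ parts, leaving $D_j^{(k)}D_j^{(k)*}\beta=0$ and thus $\|D_j^{(k)*}\beta\|^2=0$, so the $\mathrm{Image}\,D_j^{(k)*}$ component vanishes. Passing to the quotient gives $H^j_{(k)}(M)=\ker D_j^{(k)}/\mathrm{Image}\,D_{j-1}^{(k)}\cong\mathscr H^j_{(k)}(M)$, which is finite-dimensional. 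The one genuine obstacle is the order mismatch introduced by the second-order Baston operator: at positions $k$ and $k+1$ the naive second-order Laplacian is not elliptic, and the point to get right is that the balanced fourth-order operators above are elliptic, still cut out the correct harmonic spaces, and still have their image captured by $D_{j-1}^{(k)}$ and $D_j^{(k)*}$.
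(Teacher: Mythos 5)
Your proposal is correct, and at its core it follows the paper's own strategy: the same order-balanced Laplacians, namely the second-order $D_{j-1}^{(k)}D_{j-1}^{(k)*}+D_j^{(k)*}D_j^{(k)}$ away from the Baston operator and the fourth-order $\Box_k=\bigl(D_{k-1}^{(k)}D_{k-1}^{(k)*}\bigr)^2+D_k^{(k)*}D_k^{(k)}$, $\Box_{k+1}=D_k^{(k)}D_k^{(k)*}+\bigl(D_{k+1}^{(k)*}D_{k+1}^{(k)}\bigr)^2$ at positions $k,k+1$, followed by the standard theory of self-adjoint elliptic operators on a compact manifold (Theorem \ref{thm:Ell-reg}) and the same identification of $\ker\Box_j$ with the harmonic space. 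Where you genuinely diverge is in extracting the decomposition (\ref{eq:Hodge-decomposition}). The paper works with the Green's operator $\mathbf{G}_k$: for $f\perp\bigl({\rm Image}\,D_k^{(k)*}\oplus\mathscr H^k_{(k)}(M)\bigr)$ it exhibits the explicit preimage $u=D_{k-1}^{(k)*}D_{k-1}^{(k)}D_{k-1}^{(k)*}\mathbf{G}_kf$ with $D_{k-1}^{(k)}u=f$, and the nontrivial step there is proving $D_k^{(k)*}D_k^{(k)}\mathbf{G}_kf=0$, which rests on the commutation relation (\ref{eq:commutation}), i.e. $D_k^{(k)}\Box_k=\Box_{k+1}D_k^{(k)}$, itself a consequence of $D_k^{(k)}D_{k-1}^{(k)}=D_{k+1}^{(k)}D_k^{(k)}=0$. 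You bypass this entirely by a sandwich argument: the inclusion ${\rm Image}\,\Box_j\subseteq{\rm Image}\,D_{j-1}^{(k)}+{\rm Image}\,D_j^{(k)*}$ is immediate from the shape of $\Box_j$, while the mutual orthogonality of the two images and of the harmonic space places both images inside $\bigl(\mathscr H^j_{(k)}(M)\bigr)^\perp={\rm Image}\,\Box_j$, forcing ${\rm Image}\,\Box_j={\rm Image}\,D_{j-1}^{(k)}\oplus{\rm Image}\,D_j^{(k)*}$. Both arguments are sound; yours is softer, treats all $j$ uniformly (the paper writes out only $j=k$), and makes explicit the ellipticity of the balanced Laplacians via the symbol exactness of Proposition \ref{prop:elliptic}, a point the paper asserts with a reference to the flat case. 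What the paper's route buys in exchange is an explicit solution operator for the equation $D_{k-1}^{(k)}u=f$, which is useful if one wants canonical solutions or estimates rather than bare existence.
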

See   the Appendix  for a detailed proof.
For domains in $\mathbb{H}^n$,   the cohomology groups of the $k$-Cauchy-Fueter complex are much more difficult to study. In \cite{CMW}
\cite{Wa08}, we got results for the associated Neumann problem over domains in   $\mathbb{H}$.
\section{  Vanishing Theorem over quaternionic K\"ahler manifolds}
We only consider     quaternionic K\"ahler manifolds in this section.

\begin{thm}\label{thm:Vanishing} Suppose that $M$ is  a compact quaternionic K\"ahler  manifold (right conformally flat if
$\dim_{\mathbb{R}}M=4$) with negative scalar curvature.
    Then we have $  H^j_{ (k) }(M)=\{0\}$ for $j=1,\ldots, k-1 $.
\end{thm}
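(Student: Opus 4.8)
The plan is to prove a Weitzenb\"ock formula for the second-order Laplacians of the complex and then integrate it against harmonic representatives. Since $1\le j\le k-1$, both operators meeting the $j$-th slot, namely $D_{j-1}^{(k)}=\mathscr D_{j-1,k-j+1}$ and $D_j^{(k)}=\mathscr D_{j,k-j}$, are of first order, so the associated Laplacian $\Box_j:=D_j^{(k)*}D_j^{(k)}+D_{j-1}^{(k)}D_{j-1}^{(k)*}$ is of order two. By Theorem \ref{thm:BVP}, $H^j_{(k)}(M)\cong\mathscr H^j_{(k)}(M)=\{f\in\Gamma(\Lambda^j E^*\otimes\odot^{k-j}H^*);\ D_j^{(k)}f=0,\ D_{j-1}^{(k)*}f=0\}=\ker\Box_j$, so it suffices to show $\ker\Box_j=\{0\}$.

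First I would use the quaternionic K\"ahler hypothesis to fix the setup. Here $\nabla$ preserves the $\mathrm{Sp}(n)\mathrm{Sp}(1)$-structure, so $\epsilon_{AB}$ is a parallel symplectic form on $E$ and may be used, together with $\varepsilon_{A'B'}$, to raise and lower unprimed indices and to define the pointwise Hermitian inner product and the $L^2$ pairing. The formal adjoint of $\nabla_{AA'}$ is then $-\nabla^{AA'}$ up to sign, and integration by parts on the compact $M$ gives explicit two-component expressions for $D_j^{(k)*}$ and $D_{j-1}^{(k)*}$ as symmetrised divergence operators.

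Next I would compute $\Box_j f$ for $f\in\Gamma(\Lambda^j E^*\otimes\odot^{k-j}H^*)$ by expanding the two compositions in the two-component notation. Collecting the terms produces the rough Laplacian $\nabla^{AA'}\nabla_{AA'}f$ plus commutator terms of the type $\nabla_{[A}^{(A'}\nabla_{B]}^{B')}$ and their analogues. By the generalized Ricci identity (\ref{eq:Ricci}) and the commutator formulae (\ref{eq:bracket-curvature}) these commutators are curvature terms, which I would reduce using the curvature decomposition of Proposition \ref{prop:curvature} together with the Einstein conditions of Proposition \ref{prop:Einstein}, namely $\Phi_{ABA'B'}=0$ and $\Lambda_{AB}=\Lambda\epsilon_{AB}$ with $\Lambda=s_g/(8n(n+2))$. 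The key simplification is that the totally symmetric Weyl curvature $\Psi_{ABC}{}^{D}$ of (\ref{eq:curvature-sym}) drops out: it is contracted against the unprimed indices of $f$, which are totally antisymmetric, so every such contraction vanishes; the $\Phi$-terms vanish by the Einstein condition. Hence only $\Lambda$ survives, and the Weitzenb\"ock formula takes the form $\Box_j f=\nabla^*\nabla f+\Lambda\,\mathcal K_{j,k}f$, where $\mathcal K_{j,k}$ is a self-adjoint zeroth-order endomorphism built from the parallel forms $\epsilon$ and $\varepsilon$, whose eigenvalues are fixed by how the curvature acts on the $j$ antisymmetric unprimed and $k-j$ symmetric primed slots.

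Finally I would integrate over $M$: for $f\in\ker\Box_j$ one gets $0=\int_M\langle\Box_j f,f\rangle=\int_M|\nabla f|^2+\Lambda\int_M\langle\mathcal K_{j,k}f,f\rangle$. Since $s_g<0$ we have $\Lambda<0$, so provided $\mathcal K_{j,k}$ is a nonpositive operator on $\Lambda^j E^*\otimes\odot^{k-j}H^*$ for $1\le j\le k-1$, both terms on the right are nonnegative, forcing $\nabla f=0$ and hence $f=0$. Thus $\mathscr H^j_{(k)}(M)=\{0\}$ and the theorem follows. The main obstacle will be the bookkeeping in the Weitzenb\"ock computation --- arranging the two compositions so that the rough Laplacian appears cleanly --- and, above all, verifying that the curvature endomorphism $\mathcal K_{j,k}$ has the correct sign precisely on the range $1\le j\le k-1$ (and not beyond, where the second-order Baston operator $D_k^{(k)}$ changes the structure and where the complementary cases are instead covered by Horan's computation for Salamon's complexes).
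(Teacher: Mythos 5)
Your high-level strategy is indeed the paper's: reduce via Theorem \ref{thm:BVP} to showing that a section with $D_j^{(k)}f=0$ and $D_{j-1}^{(k)*}f=0$ vanishes, kill the $\Phi$-curvature terms by the Einstein condition of Proposition \ref{prop:Einstein} and the $\Psi$-terms by antisymmetry (Corollary \ref{lem:R-upper}), then integrate. The gap is at the centre: the identity you propose to prove, $\Box_j=\nabla^*\nabla+\Lambda\,\mathcal K_{j,k}$ with $\mathcal K_{j,k}$ of order zero, is false, so no amount of bookkeeping will produce it. The principal symbol of $\Box_j=D_j^{(k)*}D_j^{(k)}+D_{j-1}^{(k)}D_{j-1}^{(k)*}$ is not a scalar multiple of the identity, while that of $\nabla^*\nabla$ is. Take $k=2$, $j=1$, $n\geq 2$, and $\xi$ normalized as in (\ref{eq:M-xi})--(\ref{eq:xi-lift}); a direct computation with the symbol (\ref{eq:sigma-xi}) and the inner product of Section 4 gives, for $\theta\in E^*\otimes H^*$,
\begin{equation*}
\bigl\langle(\sigma_1^*\sigma_1+\sigma_0\sigma_0^*)\theta,\theta\bigr\rangle
=\sum_{A=0,1}\sum_{A'}|\theta_{AA'}|^2+\frac12\sum_{A\geq 2}\sum_{A'}|\theta_{AA'}|^2 ,
\end{equation*}
which has two distinct eigenvalues, and no relative rescaling of the two summands (i.e.\ of the fibre metrics) can make it scalar. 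Hence $\Box_j-\nabla^*\nabla$ is genuinely of second order, and the step ``collecting the terms produces the rough Laplacian plus curvature terms'' cannot be carried out. This is precisely why Proposition \ref{prop:Weitzenbock} has a different shape: it is an \emph{integrated} identity valid only for sections satisfying both harmonic equations, its left-hand side is $\|\widehat\nabla f\|^2+c_j\|\nabla^*f\|^2$ --- the antisymmetrised derivative (\ref{eq:connection-antisymmetisation}) and the divergence, not the full gradient --- and its proof expands $0=(\mathscr D_{q,p}^*\mathscr D_{q,p}f,f)$ into the five terms $S_1,\dots,S_5$ and uses the co-closedness hypothesis (\ref{eq:adjoint-condition}) a second time, through Lemma \ref{lem:phi-partial}, to convert the cross terms $\nabla^{A'}_{B_s}\nabla^{B'A}f$ into $\nabla\nabla^*f$ terms. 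Your plan, which wants a formula holding as an operator identity before the harmonic equations are imposed, has no mechanism for those terms.

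There is also a slip at the end. Even granting a formula $0=\|\nabla f\|^2+\Lambda\int_M\langle\mathcal K_{j,k}f,f\rangle\,dV_g$ with $\mathcal K_{j,k}$ nonpositive, you could only conclude $\nabla f=0$ and $\langle\mathcal K_{j,k}f,f\rangle\equiv 0$; a parallel section lying pointwise in $\ker\mathcal K_{j,k}$ would survive, so ``hence $f=0$'' does not follow --- you would need $\mathcal K_{j,k}$ negative \emph{definite}, which is exactly the point you leave unverified. The paper's identity (\ref{eq:Weitzenbock}) closes this loophole automatically: its right-hand side is $\frac{2n-1+(-1)^j}{j+1}(p+2)\int_M\Lambda|f|^2\,dV_g$ with a strictly positive constant in front (for $n\geq 2$), so negativity of the scalar curvature forces $\|f\|^2=0$ outright, with no appeal to parallel sections.
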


 Horan
 \cite{Ho} \cite{Ho2} proved the Weitzenb\"ock formula and the vanishing theorem for the first cohomology group of  Salamon's complex.
 See also theorem 4.3  of Nagatomo-Nitta \cite{NN} for vanishing theorem for Salamon's complexes when the manifolds have negative scalar
 curvatures, which essentially  implies the   vanishing of $ H^j_{ (k) }(M)$ of the $k$-Cauchy-Fueter  complex for $j\geq k+3$.  See
 also \cite{Sp} for
indices of Salamon's  complexes.

It is sufficient to prove the associated  Weitzenb\"ock formula. In this section, on a quaternionic K\"ahler  manifold,   we will choose
coordinate charts  $U_\alpha$ with trivialization $E^*|_{U_\alpha }
= U_\alpha\times \mathbb{C}^{2n}$, $H^*|_{U_\alpha }= U \times \mathbb{C}^{2 } $, two-component  local orthonormal quaternionic frame
$\{Z_{AA'}\}$  (\ref{eq:k-CF}) such that $g, \varepsilon$ and $\epsilon$ are  standard:
 \begin{equation}(\epsilon_{AB})= \left ( \begin{array}{rrrrr} 0&
 1&&&\\-1& 0&&&\\
 &&\ddots&&\\&&&0&
 1\\&&& -1& 0\end{array}\right).  \qquad \label{eq:epsilon2}
 \end{equation}$ (\epsilon^{A B }) $ is the inverse of $(\epsilon_{AB})$. They are used to raise or lower unprimed indices, e.g. $
  \nabla_{A'}^{A }=\nabla_{A'B }\epsilon^{B A }$.
Define inner product locally as
\begin{equation*}
  \langle v,w\rangle:=\sum_{B_1,\ldots , B_p'}{v}_{B_1\ldots B_qB_1'\ldots B_p'} \overline{{w}_{B_1\ldots B_qB_1'\ldots B_p'} }
\end{equation*}for two $\mathfrak T_{q,p}$-tensor $v$ and $w$, and $|v|^2:= \langle v,v\rangle$.   It is obviously  well defined
globally since it
 is invariant under the transformation $v\rightarrow \widetilde{v}$  ($w\rightarrow \widetilde{w}$) with
 \begin{equation*}
    \widetilde{v}_{A_1\ldots A_qA_1'\ldots A_p'}= \prod_{j=1}^q \Phi_{A_j}^{\phantom{A_j} B_j} \prod_{l=1}^p \Psi_{A_l'}^{\phantom{A_j'}
    B_l'} v_{B_1\ldots B_qB_1'\ldots B_p'},
 \end{equation*} for $\Phi\in{\rm SU}(2n)$, $\Psi\in{\rm SU}(2 )$.
  It is an Hermitian inner product.  The covariant derivatives can be extended naturally to  $(\otimes^r E)\otimes (\otimes^s H)\otimes
  ( \otimes^q E^*)\otimes(\otimes^p H^*)$.
Define $L^2$-inner product $(f,h):=\int_M\langle f,h\rangle dV_g$, where $dV_g $ is the volume form (\ref{eq:vol}) of the metric $g$.
Denote $\|f\|:=(f,f)^{\frac 12}$.

Theorem \ref{thm:Vanishing} is a consequence of the Hodge-type    decomposition in Theorem \ref{thm:BVP} and  the following
Weitzenb\"ock formula,
because the right hand side of (\ref{eq:Weitzenbock}) is negative if the scalar curvature is negative.

\begin{prop}\label{prop:Weitzenbock}  Suppose that $f\in \Gamma(\Lambda^j E^* \otimes\odot^{k-j} H^* )$ satisfying  $ D_j^{(k)}  f=0$, $
D_{j-1}^{(k)*}  f=0$
on   a compact quaternionic K\"ahler  manifold  $M$ of dimension $4n$ (it is right conformally flat if  $\dim_{\mathbb{R}}M=4$),
$j=1,\ldots, k-1 $.  Then we have
 \begin{equation}\label{eq:Weitzenbock}
  \begin{split}
  \left\|\widehat{\nabla }{f} \right\|^2+c_{j } \left\|\nabla^*  f \right\|^2
 = \frac { 2 n-1+(-1)^j}{ (j+1)}( p+2)\int_M\Lambda \left|f  \right |^2dV_g,\end{split}
\end{equation}
where  $p=k-j$,  $c_{j}= \frac { 1-(-1)^j  } {  2(j+1)}\frac {p+2}{ p+1 }\geq 0$,   $\Lambda=\frac {s_g}{8n(n+2)}$, and
 $s_g$ is the scalar curvature.
 \end{prop}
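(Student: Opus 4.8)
The plan is to prove the identity (\ref{eq:Weitzenbock}) by integration by parts, converting $\|\widehat\nabla f\|^2$ into a curvature integral with the help of the kernel conditions. Throughout I work in a local orthonormal two-component frame in which $g$, $\varepsilon$, $\epsilon$ are standard, so that unprimed indices may be raised and lowered by $\epsilon_{AB}$ as at the beginning of this section. The organizing observation is that the full gradient $\nabla_{A_0A_0'}f_{A_1\ldots A_jA_1'\ldots A_p'}$ splits, under the holonomy group ${\rm Sp}(n){\rm Sp}(1)$, according to whether the new unprimed index $A_0$ is antisymmetrised with $A_1,\ldots,A_j$ or contracted against them, and whether $A_0'$ is symmetrised with $A_1',\ldots,A_p'$ or contracted. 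The antisymmetric-unprimed components together constitute $\widehat\nabla f$, see (\ref{eq:connection-antisymmetisation}); its primed-contracted part is exactly $\mathscr D_{j,p}f=D_j^{(k)}f$, while the unprimed-trace components form the divergence $\nabla^* f$, whose primed-symmetrised part is the codifferential $\mathscr D^*_{j-1,p+1}f=D^{(k)*}_{j-1}f$. Thus the hypotheses $D_j^{(k)}f=0$ and $D_{j-1}^{(k)*}f=0$ annihilate precisely two of these pieces, and the terms surviving in $\|\widehat\nabla f\|^2$ and $\|\nabla^* f\|^2$ are the remaining ``twistor'' and ``double-trace'' parts of $\nabla f$.

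Next I would compute $\|\widehat\nabla f\|^2$ by expanding the antisymmetrisation in (\ref{eq:connection-antisymmetisation}). The diagonal term reproduces a multiple of $\int_M|\nabla f|^2\,dV_g$, while each of the $j$ exchange terms has the shape $\int_M\langle\nabla_{A_0'A_s}f_{\ldots},\nabla^{A_0'A_0}f_{\ldots}\rangle\,dV_g$, with the derivative index $A_0$ occupying an $f$-slot of the second factor. Integrating by parts (the boundary term vanishes since $M$ is compact) moves $\nabla^{A_0'A_0}$ onto the first factor and produces the second-order operator $\nabla^{A_0'A_0}\nabla_{A_0'A_s}$ acting on $f$. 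I would split this into its part symmetric in $A_0,A_s$ --- which assembles, together with the diagonal term and the contributions involving $\nabla^* f$, into the rough Laplacian and divergence-squared pieces --- and its antisymmetric part $\nabla^{A_0'}_{[A_0}\nabla_{|A_0'|A_s]}$, which is a commutator and hence a curvature term by the generalised Ricci identity (\ref{eq:Ricci}) together with the bracket formulae (\ref{eq:raise-bracket})--(\ref{eq:bracket-curvature}).

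The curvature is then evaluated using Proposition \ref{prop:curvature} and the quaternionic K\"ahler simplifications of Proposition \ref{prop:Einstein}, namely $\Phi_{ABA'B'}=0$ and $\Lambda_{AB}=\Lambda\epsilon_{AB}$ (and, when $\dim_{\mathbb R}M=4$, right conformal flatness $\Psi'=0$ in (\ref{eq:curvature-4d})). The decisive point is that the curvature, once contracted against the totally antisymmetric $f_{A_1\ldots A_j}$ and antisymmetrised by $\widehat\nabla$, only involves the totally antisymmetrised lower unprimed indices; by (\ref{eq:R-A_1' A_2'-0}) the totally symmetric trace-free Weyl part $\Psi$ drops out entirely, $\Phi$ vanishes by the Einstein property, and $\Psi'$ vanishes by right conformal flatness in the four-dimensional case. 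Hence only the scalar term proportional to $\Lambda$ survives, both for the action on unprimed and (via the primed curvature in (\ref{eq:curvature})) on primed indices of $f$. Invoking $D_j^{(k)}f=0$ and $D_{j-1}^{(k)*}f=0$ to discard the $\|D_j^{(k)}f\|^2$ and the $\mathscr D^*_{j-1,p+1}f$ contributions, one collects the remaining terms into $\|\widehat\nabla f\|^2+c_j\|\nabla^* f\|^2$ on the left and a single multiple of $\int_M\Lambda|f|^2\,dV_g$ on the right.

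The main obstacle is the combinatorial bookkeeping rather than any conceptual difficulty: one must track the exact rational coefficients generated by the repeated symmetrisations over the $j$ unprimed and $p$ primed indices and by the $\varepsilon$- and $\epsilon$-traces, so that they assemble into the precise constants $c_j=\frac{1-(-1)^j}{2(j+1)}\frac{p+2}{p+1}$ and $\frac{2n-1+(-1)^j}{j+1}(p+2)$. In particular, the parity dependence $(-1)^j$ and the vanishing of $c_j$ for even $j$ must emerge from the way the antisymmetric commutator term pairs with $f$ after the double antisymmetrisation, and verifying that every $\Psi$ and $\Psi'$ contribution indeed cancels against the antisymmetry of $f$ is the delicate step. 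By contrast, the integration by parts and the appeals to Propositions \ref{prop:curvature} and \ref{prop:Einstein} are routine, and the final vanishing statement of Theorem \ref{thm:Vanishing} follows at once by combining (\ref{eq:Weitzenbock}) with the Hodge-type decomposition of Theorem \ref{thm:BVP}, since the left-hand side is non-negative while the right-hand side is strictly negative when $\Lambda<0$ unless $f\equiv 0$.
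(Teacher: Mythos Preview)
Your outline captures the right ingredients---integration by parts, the commutator identities (\ref{eq:raise-bracket})--(\ref{eq:bracket-curvature}), and the curvature simplifications from Propositions \ref{prop:curvature} and \ref{prop:Einstein}---but the organizing principle is different from the paper's, and as written your plan has a gap.

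The paper does \emph{not} start by expanding $\|\widehat\nabla f\|^2$. It starts from the hypothesis $D_j^{(k)}f=0$, writes $0=(\mathscr D_{q,p}^*\mathscr D_{q,p}f,f)$ with $q=j$, $p=k-j$, and then decomposes $\nabla^{A}_{B_1'}\nabla^{A'}_{[A}f_{B_1\ldots B_q]A'B_2'\ldots B_p'}$ according to the \emph{primed} indices of the two derivatives, using the two--dimensionality of $H$ in the form $\varepsilon_{B'B_1'}\nabla^{A[B'}\nabla^{A']}_{C}=-\tfrac12\nabla^{AC'}\nabla_{C'C}$. This antisymmetric-primed piece is exactly $S_1$, and upon pairing with $f$ it becomes $\tfrac12\|\widehat\nabla f\|^2$ directly, by $(\nabla^*\widehat\nabla f,f)=(\widehat\nabla f,\nabla f)=(\widehat\nabla f,\widehat\nabla f)$ via (\ref{eq:bracket-omit}). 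The remaining symmetric-primed piece is then split by displaying the unprimed antisymmetrisation into four terms $S_2,\ldots,S_5$: $S_2,S_3$ are honest commutators $\nabla^{(B'}_{[A}\nabla^{A')}_{B]}$ and reduce to $\Lambda$ by (\ref{eq:bracket-curvature}) and (\ref{eq:Kahler-vanishing}); $S_4,S_5$ are handled by the second hypothesis $D_{j-1}^{(k)*}f=0$ together with the trace identity of Lemma \ref{lem:phi-partial}, which converts $\nabla^{A}_{B_1'}f_{A\ldots A'\ldots}$ into a multiple of $\varepsilon_{B_1'(A'}(\nabla^*f)_{\ldots)}$. No rough Laplacian $\|\nabla f\|^2$ ever appears.

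By contrast, your route---expand $\|\widehat\nabla f\|^2$ over the $(j{+}1)$ unprimed permutations---produces a diagonal term $\tfrac1{j+1}\|\nabla f\|^2$ that is genuinely second order and is \emph{not} a commutator. You say the symmetric-in-$(A_0,A_s)$ part of the cross terms ``assembles, together with the diagonal term \ldots\ into the rough Laplacian and divergence-squared pieces,'' but the target identity (\ref{eq:Weitzenbock}) contains no rough Laplacian; unless you supply a second relation to eliminate $\|\nabla f\|^2$ (which in effect means expanding $0=\|\mathscr D_{q,p}f\|^2$ anyway, or performing a full ${\rm Sp}(n){\rm Sp}(1)$--decomposition of $\nabla f$ with all branching coefficients), the computation does not close. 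You also do not invoke anything playing the role of Lemma \ref{lem:phi-partial}, which in the paper is precisely what turns the codifferential hypothesis $D_{j-1}^{(k)*}f=0$ into the $\|\nabla^*f\|^2$ term with its coefficient $c_j$; without it the $S_5$-type contribution cannot be identified with a divergence norm. So the strategy is sound in spirit, but as stated it is the paper's approach that makes the bookkeeping tractable: begin from $0=\|D_j^{(k)}f\|^2$, split on the primed derivative indices first, and use Lemma \ref{lem:phi-partial} for the codifferential piece.
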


  \begin{rem}On quaternionic K\"ahler  manifold, Semmelmann and Weingart \cite{SW}
derived   Weitzenb\"ock formulae only involving twisted Dirac and twistor operators, but not for our operators $D_j^{(k)} $'s. They
\cite{Se} obtained universal Weitzenb\"ock formula for all irreducible  non-symmetric holonomy groups, and  an recursive procedure for
obtaining   coefficients of Weitzenb\"ock formulas for the holonomy groups ${\rm SO}(n)$, $ G_2$ and ${\rm Spin}(7)$.
But the   recursive formula for   coefficients of Weitzenb\"ock formulae for the holonomy group ${\rm {Sp}}(n ){\rm {Sp}}(1)$ was not
given there. Even if we obtain the recursive formula and know the concrete Weitzenb\"ock formulae,  we also need combine several
Weitzenb\"ock formulae to obtain  the identity (\ref{eq:Weitzenbock}).  It will be a tedious algebraic calculation to derive
(\ref{eq:Weitzenbock}) from universal Weitzenb\"ock formula in \cite{Se}. See also Homma \cite{Hom} for Weitzenb\"ock formulae over
quaternionic K\"ahler  manifolds.
  \end{rem}
\subsection{The formal adjoint operators}
By  two-component   notation, we can derive the formal adjoint operators of
$\nabla$ and $\mathscr D_{q,p  }$ explicitly, while there is no proof for them in Horan
 \cite{Ho} \cite{Ho2}.

\begin{prop}\label{prop:unitary-gamma} On a     quaternionic K\"ahler  manifold, we have    \begin{equation}\label{eq:Z-raise}
      Z^{AA'}=\overline{Z_{AA'}};
    \end{equation}
    and
\begin{equation}\label{eq:unitary-gamma}
  \overline{\Gamma^{\phantom {AA'D} C }_{AA'B}}
    = -\Gamma^{AA'\phantom { B }B
   }_{\phantom { BB' } C },\qquad  \overline{\Gamma^{\phantom {AA'D'} C' }_{AA'B'}}
    = -\Gamma^{AA'\phantom { B' }B'
   }_{\phantom { BB' } C '}.
\end{equation}
Moreover, the formal adjoint
 of $ Z_{AA'}$ is
   \begin{equation}\label{eq:Z_AA'}
   Z_{AA'}^*=- Z^{ AA'}+ {\Gamma}_{\phantom{DB'}D}^{DA'\phantom{ E}
A} + {\Gamma}_{\phantom{BB'}D'}^{A D'\phantom{ E'}
A'}.
\end{equation}
\end{prop}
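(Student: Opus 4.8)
The three assertions build on one another, so the plan is to prove them in the stated order: \eqref{eq:Z-raise} feeds \eqref{eq:unitary-gamma}, and both feed \eqref{eq:Z_AA'}. For \eqref{eq:Z-raise} I would argue by a direct block computation in the explicit frame \eqref{eq:k-CF}. Since the $X_a$ are real vector fields, complex conjugation merely reverses the sign of every $\mathbf i$ in \eqref{eq:k-CF}; within the $l$-th $2\times2$ block one reads off $\overline{Z_{(2l)0'}}=Z_{(2l+1)1'}$, $\overline{Z_{(2l)1'}}=-Z_{(2l+1)0'}$, and so on. On the other hand, raising both indices with the standard $\epsilon^{AB}$ (from \eqref{eq:epsilon2}) and $\varepsilon^{A'B'}$ (from \eqref{eq:epsilon}) under the convention $f^{A'}=f_{B'}\varepsilon^{B'A'}$ produces precisely these same combinations, giving $Z^{AA'}=\overline{Z_{AA'}}$ block by block. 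This is the same reality property of $\tau$-matrices, $\tau(A)J=J\overline{\tau(A)}$ of Proposition~\ref{p2.1}(2), that is used in \eqref{eq:M-xi}.

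For \eqref{eq:unitary-gamma} the key observation is that on a quaternionic K\"ahler manifold the induced connection on $E$ is an $\mathrm{Sp}(n)$-connection, hence commutes with the quaternionic structure $j_E$ of $E$ (the antilinear map $e_C\mapsto \epsilon_{DC}e_D$, whose matrix is the $J$ of Proposition~\ref{p2.1}(2)). Because $j_E$ is antilinear and $Z_{AA'}=\tfrac1{\sqrt2}(X_a+\mathbf iX_b)$, this commutation twists into $j_E\circ\nabla_{AA'}=\nabla_{Z^{AA'}}\circ j_E$, where the passage $Z_{AA'}\mapsto Z^{AA'}$ is exactly \eqref{eq:Z-raise}. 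Applying this identity to $e_B$, substituting $j_E(e_C)=\epsilon_{DC}e_D$, comparing coefficients of the frame, and then moving $\epsilon$ through the connection via $\nabla\epsilon=0$ together with the contraction identity $\epsilon_{CF}\epsilon^{GF}=\delta_C^G$, I would solve for $\overline{\Gamma_{AA'B}^{\phantom{AA'B}C}}$ and obtain the first identity. The primed identity is identical, with $j_H$, $\varepsilon$ and \eqref{eq:preserve} replacing $j_E$, $\epsilon$ and $\nabla\epsilon=0$.

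Finally, for \eqref{eq:Z_AA'} I would use the standard formal adjoint of a complex vector field acting on functions for the Hermitian product $(f,h)=\int_M f\overline h\,dV_g$: integration by parts gives $Z^*=-\overline Z-\overline{\operatorname{div}(Z)}$, the divergence being taken with respect to $dV_g$. By \eqref{eq:Z-raise}, $\overline{Z_{AA'}}=Z^{AA'}$. Since the quaternionic K\"ahler connection is metric and torsion-free it preserves $dV_g$, so $\operatorname{div}(Z_{AA'})$ equals the trace of $\nabla Z_{AA'}$; computing $\nabla_{BB'}Z_{AA'}=\Gamma_{BB'A}^{\phantom{BB'A}C}Z_{CA'}+\Gamma_{BB'A'}^{\phantom{BB'A'}C'}Z_{AC'}$ from \eqref{eq:connection-E-H-AA'} and tracing over the frame yields $\operatorname{div}(Z_{AA'})=\Gamma_{DA'A}^{\phantom{DA'A}D}+\Gamma_{AD'A'}^{\phantom{AD'A'}D'}$. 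Conjugating these two traces by \eqref{eq:unitary-gamma} and substituting into $Z_{AA'}^*=-Z^{AA'}-\overline{\operatorname{div}(Z_{AA'})}$ reproduces \eqref{eq:Z_AA'}.

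The computations in \eqref{eq:Z-raise} and the divergence trace are routine; the main obstacle is the index-and-sign bookkeeping in \eqref{eq:unitary-gamma}, namely keeping straight which index is raised or lowered and the signs forced by the antisymmetry of $\epsilon$ and $\varepsilon$, and then applying that conjugation rule to the two contracted traces so that the plus signs in \eqref{eq:Z_AA'} emerge correctly.
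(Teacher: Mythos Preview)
Your proposal is correct and the overall architecture matches the paper's, but two of the three steps are argued differently.

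For \eqref{eq:Z-raise} your block-by-block check is exactly what the paper does.

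For \eqref{eq:unitary-gamma} the paper takes a shorter path: since $\tau(\mathrm{Sp}(n))\subset\mathrm{SU}(2n)$, the connection matrix $\omega'(X)$ is skew-Hermitian for each real $X$, i.e.\ $\overline{\omega'(X)_B^{\phantom{B}C}}=-\omega'(X)_C^{\phantom{C}B}$. Writing $Z_{AA'}=X+\mathbf iY$ and using \eqref{eq:Z-raise} then gives \eqref{eq:unitary-gamma} in one line. Your route via $j_E$-commutation is valid but actually encodes only the $\mathfrak{gl}(n,\mathbb H)$ condition $J\overline\Gamma=\Gamma J$; to extract skew-Hermiticity you must combine it with $\nabla\epsilon=0$ (the $\mathfrak{sp}(2n,\mathbb C)$ condition $\Gamma^TJ+J\Gamma=0$), and together these yield $\overline\Gamma=-\Gamma^T$. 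So your argument works, but the paper's direct appeal to $\mathfrak{sp}(n)\subset\mathfrak{su}(2n)$ avoids the $\epsilon$-bookkeeping you flagged as the main obstacle.

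For \eqref{eq:Z_AA'} the paper carries out the integration by parts explicitly via Stokes and the Cartan formula for $de^{BB'}$, arriving first at an expression with sums over $(B,B')\neq(A,A')$ which is then simplified using the trace-free conditions $\Gamma_{aB}^{\phantom{aB}B}=0$, $\Gamma_{aB'}^{\phantom{aB'}B'}=0$. Your use of $Z^*=-\overline Z-\overline{\operatorname{div}Z}$ with $\operatorname{div}(Z_{AA'})=\operatorname{tr}(\nabla Z_{AA'})$ is the same computation packaged more conceptually, and it reaches \eqref{eq:Z_AA'} without the intermediate simplification step. Either way is fine; your version is slightly cleaner.
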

\begin{proof} (1) By definition,  $Z^{AA'}:=Z_{BB'}\epsilon^{BA}\varepsilon^{B'A'}$ and $\epsilon^{1 0}=-\epsilon^{01}=1$,
$\varepsilon^{1' 0'}=-\varepsilon^{0'1'}=1$. It is direct from definition of $ Z_{AA'}$'s in (\ref{eq:k-CF}) to see that
 \begin{equation*}\begin{split}
   & \overline{Z_{00'}}=Z_{11'}=Z^{00'},\qquad  \overline{Z_{10'}}=-Z_{01'}=Z^{10'},\\&
   \overline{Z_{01'}}=-Z_{10'}=Z^{01'},\qquad  \overline{Z_{11'}}= Z_{00'}=Z^{11'}, \cdots.
    \end{split}
 \end{equation*}

(2) On a     quaternionic K\"ahler  manifold $M$, we know that for any $X\in TM$, the connection $\omega'(X)$ on the bundle $E$  is
  $  \mathfrak {su}(2 n )$-valued. If  write $Z_{AA'}=X+\mathbf{i}Y$ for some  $X, Y\in TM$,  we have
\begin{equation*}\begin{split}
     \overline{\Gamma^{\phantom {AA'D} C }_{AA'B}}&=\overline{\omega'(X +\mathbf{i}Y)_B^{\phantom {A}C}}=\overline{\omega'(X
     )_B^{\phantom
   {A}C}} -\mathbf{i}\overline{\omega'( Y)_B^{\phantom {A}C}} =-
   {\omega'(\overline{Z_{AA'} } )_C^{\phantom {A}B}}  = -\Gamma^{AA'\phantom { B }B
   }_{\phantom { BB' } C }.
\end{split}\end{equation*}
Here we have used $\overline{Z_{AA'} }= {Z^{AA'} }$ in (\ref{eq:Z-raise}).  It is similar for $\Gamma^{\phantom {AA'D'} C' }_{AA'B'}$.

(3) Recall that the formal adjoint $Z_{AA'}^*$
 of $ Z_{AA'}$ satisfies
 \begin{equation}\label{eq:formal adjoint-Z}
    \int_M Z_{AA'} f\cdot \overline{h}dV_g=\int_Mf \cdot\overline{Z_{AA'}^*h}dV_g
 \end{equation}for any compactly supported scalar functions $f$ and $h$.
 Note that $Xf=i_{X }df$, where $i_X$ is the interior operator. By
Stokes' formula, we have
\begin{equation}\begin{split}\int_M Z_{AA'} (f\overline{h})dV_g&=  \int_M d (f\overline{h}) \wedge i_{Z_{AA'} }dV_g  =-\int_M
f\overline{h}\cdot d(i_{Z_{AA'} }dV_g),\end{split}\label{eq:stokes}\end{equation}
where the volume form $dV_g$ is given by (\ref{eq:vol}),  $\{e^{BB'}\}$ is dual to $\{ Z_{BB'}\}$ and
\begin{equation}\label{eq:d-dV}
   d\left(i_{Z_{AA'} }dV_g\right)=\sum_{(B,B')\neq (A,A')}d e^{BB'}\wedge i_{Z_{BB'} } i_{Z_{AA'} }dV_g.
\end{equation}
By the standard
exterior differentiation formula,
$  d  \varphi(X,Y)=\frac 12 [(\nabla_X   \varphi)(Y)-(\nabla_Y  \varphi)(X)+  \varphi (\tau_{X,Y})]
$
for $1$-form $  \varphi\in {\Omega}^1(M)$  and  the torsion
 $\tau_{X,Y}=0$ since the connection is torsion-free, we find that
\begin{equation*}\begin{split}
 2  d e^{BB'}( Z_{CC'}, Z_{DD'})&=\left(-{\Gamma}_{CC'E}^{\phantom{CC'E}
B}e^{EB'}-{\Gamma}_{CC'E'}^{\phantom{CC'E'}
B'}e^{BE'}\right)(Z_{DD'})-CC'\leftrightarrow DD'\\&
= (-{\Gamma}_{CC'D}^{\phantom{CC'E}
B} \delta_{D'}^{B'}-{\Gamma}_{CC'D'}^{\phantom{CC'E'}
B'}\delta_{D }^{B })-CC'\leftrightarrow DD'.
\end{split}\end{equation*} Using $\omega_1\wedge \omega_2(X,Y):= \frac 12[\omega_1(X )\omega_2( Y)-\omega_1( Y)\omega_2(X )]$,
  we get the Cartan formula
\begin{equation} \label{eq:dth}d e^{BB'}=  - \left( {\Gamma}_{CC'D}^{\phantom{CC'E}
B} \delta_{D'}^{B'}+{\Gamma}_{CC'D'}^{\phantom{CC'E'}
B'}\delta_{D }^{B }  \right)e^{CC'}\wedge e^{DD'}  .
\end{equation}Note that for fixed $AA',BB'$, we can write $dV_g=e^{AA'}\wedge e^{BB'}\wedge \widetilde{\omega}$ with the $(4n-2)$-form
$\widetilde{\omega}=i_{Z_{BB'} } i_{Z_{AA'} }dV_g$.
Substitute it and (\ref{eq:dth}) into (\ref{eq:d-dV})  to get
\begin{equation}\label{eq:d-dV2}\begin{split}
d\left(i_{Z_{AA'} }dV_g\right)=- \sum_{(B,B')\neq (A,A')}\left(  {\Gamma}_{AA'B}^{\phantom{CC'E}
B}  +{\Gamma}_{AA'B'}^{\phantom{CC'E'}
B'} -{\Gamma}_{BB'A}^{\phantom{CC'E}
B} \delta_{A'}^{B'}-{\Gamma}_{BB'A'}^{\phantom{CC'E'}
B'}\delta_{A }^{B }\right)dV_g.
\end{split}\end{equation} Now substituting   the conjugate of (\ref{eq:d-dV2}) into the right hand side of (\ref{eq:stokes}),
we find that for $A$ and $A'$ fixed,
\begin{equation}\label{eq:formal adjoint-Z'}\begin{split}
  Z_{AA'}^*=- Z^{ AA'}- \sum_{(B,B')\neq (A,A')}  \left({\Gamma}_{\phantom{BB'}B}^{AA'\phantom{ E}
B} + {\Gamma}_{\phantom{BB'}B'}^{AA'\phantom{ E'}
B'}\right)+\sum_{B\neq A}{\Gamma}_{\phantom{BB'}B}^{BA'\phantom{ E}
A} +\sum_{B'\neq A'}{\Gamma}_{\phantom{BB'}B'}^{A B'\phantom{ E'}
A'},
\end{split}\end{equation}by using  definition (\ref{eq:formal adjoint-Z}) and $\overline{Z_{AA'} }= {Z^{AA'} }$ in (\ref{eq:Z-raise}).
But for $A$ and $A'$ fixed, we have
\begin{equation}\label{eq:sum-gamma1}
   \sum_{(B,B')\neq (A,A')}{\Gamma}_{\phantom{BB'}B}^{AA'\phantom{ E}
B}={\Gamma}_{\phantom{BB'}A}^{AA'\phantom{ E}
A}+2\sum_{B \neq A }{\Gamma}_{\phantom{BB'}B}^{AA'\phantom{ E}
B}=-{\Gamma}_{\phantom{BB'}A}^{AA'\phantom{ E}
A}
\end{equation}
by tr ${\Gamma}_{a*}^{ \phantom{a*}
*}=0$ since it is also  $  \mathfrak {sl}(2 n , \mathbb{C})$-valued, and  similarly
\begin{equation}\label{eq:sum-gamma2}
   \sum_{(B,B')\neq (A,A')}{\Gamma}_{\phantom{BB'}B'}^{AA'\phantom{ E'}
B'}= \sum_{ B' \neq  A' }{\Gamma}_{\phantom{BB'}B'}^{AA'\phantom{ E'}
B'}+(2n-1)\sum_{ B' }{\Gamma}_{\phantom{BB'}B'}^{AA'\phantom{ E'}
B'}=-{\Gamma}_{\phantom{BB'}A'}^{AA'\phantom{ E'}
A'},
\end{equation}by tr ${\Gamma}_{a*'}^{ \phantom{ a*'}
*'}=0$.
  Now (\ref{eq:Z_AA'})  follows from substituting   (\ref{eq:sum-gamma1})-(\ref{eq:sum-gamma2}) into (\ref{eq:formal adjoint-Z'}).
\end{proof}
\begin{prop} \label{prop:adjoint} On a     quaternionic K\"ahler  manifold, we have

 (1) The formal adjoint $\nabla^* :\Gamma(\mathfrak T_{q+1,p+1})\rightarrow\Gamma(\mathfrak T_{q,p})$ of  $\nabla$ is given by
\begin{equation*}  \begin{split}
   (\nabla^*  f)_{A_1\ldots A_q A_1'\ldots A_p'} =&-\nabla^{A A '}  f_{A A_1\ldots A_qA 'A_1'\ldots A_p'}
    ;
 \end{split}\end{equation*}
(2)    the formal adjoint $\mathscr D_{q,p  }^*: \Gamma(\Lambda^{q+1 } E^*\otimes \odot^{p-1}H^*)\longrightarrow\Gamma(\Lambda^{q}
E^*\otimes\odot^{p
}H^*)$ of  $\mathscr D_{q,p  }$ is given by
\begin{equation*}  \begin{split}
   (\mathscr D_{q,p  }^*  f)_{A_1\ldots A_q A_1'\ldots A_p'}=& \nabla^{A}_{(A_1 '}  f_{| A A_1\ldots A_q | A_2'\ldots A_p')}
    .
 \end{split}\end{equation*}
\end{prop}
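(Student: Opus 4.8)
The plan is to prove both identities by integrating by parts at the level of components, reducing everything to the scalar adjoint $Z_{AA'}^{*}$ computed in Proposition \ref{prop:unitary-gamma} together with the unitarity relations (\ref{eq:unitary-gamma}); throughout, the test sections are taken compactly supported so that Stokes' theorem applies exactly as in the proof of Proposition \ref{prop:unitary-gamma}. For $(1)$, I take $u\in\Gamma(\mathfrak T_{q,p})$ and $v\in\Gamma(\mathfrak T_{q+1,p+1})$ and expand $(\nabla u,v)=\int_M\sum(\nabla_{AA'}u_{B_\bullet})\overline{v_{AA'B_\bullet}}\,dV_g$, where $B_\bullet$ abbreviates the remaining indices and the sum runs over all primed and unprimed indices. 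Using (\ref{eq:covariant-derivatives}) I split $\nabla_{AA'}u_{B_\bullet}=Z_{AA'}u_{B_\bullet}-(\text{Christoffel terms in the }B_\bullet)$ and integrate the frame-derivative term by parts through $\int_M(Z_{AA'}u_{B_\bullet})\overline{v_{AA'B_\bullet}}\,dV_g=\int_M u_{B_\bullet}\overline{Z_{AA'}^{*}v_{AA'B_\bullet}}\,dV_g$. Substituting the explicit form (\ref{eq:Z_AA'}) of $Z_{AA'}^{*}$, the two trace--Christoffel terms there are precisely the connection terms of $\nabla^{AA'}$ acting on the contracted indices $A$ and $A'$, so that together with the leading $-Z^{AA'}$ they rebuild $-\nabla^{AA'}v_{AA'B_\bullet}$ apart from the terms differentiating the $B_\bullet$ indices. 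Those remaining terms are furnished by the Christoffel terms coming from $\nabla u$: after relabelling the summed indices and invoking (\ref{eq:unitary-gamma}), which changes the sign and transposes the differentiated index under conjugation, each such term is transferred onto $\overline v$ and reproduces exactly the corresponding connection term of $\nabla$ acting on the $B_\bullet$ indices of $v$. Collecting the contributions yields $(\nabla^{*}v)_{B_\bullet}=-\nabla^{AA'}v_{AA'B_\bullet}$.

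For $(2)$ I would reduce to $(1)$. By definition $\mathscr D_{q,p}$ applies $\nabla$, raises the new primed index with $\varepsilon$, contracts it against the first primed index of the argument, and antisymmetrizes the unprimed indices. Pairing $\mathscr D_{q,p}f$ with $g\in\Gamma(\Lambda^{q+1}E^{*}\otimes\odot^{p-1}H^{*})$, the antisymmetrization bracket may be discarded because $g$ is already antisymmetric; writing $\nabla_{A_1}^{A_1'}=\varepsilon^{A_1'C'}\nabla_{A_1C'}$ then casts the pairing into the form $\langle\nabla f,v\rangle$ for the explicit section $v\in\Gamma(\mathfrak T_{q+1,p+1})$ with $v_{A_1C'A_2\ldots A_{q+1}A_1'A_2'\ldots A_p'}=\varepsilon^{C'A_1'}g_{A_1\ldots A_{q+1}A_2'\ldots A_p'}$, since $\varepsilon$ is real. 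Hence by $(1)$, $(\mathscr D_{q,p}f,g)=(\nabla f,v)=(f,\nabla^{*}v)$ with $\nabla^{*}v=-\nabla^{BB'}v_{BB'\cdots}$. Because $\nabla$ preserves $\varepsilon$ by (\ref{eq:preserve}), the factor $\varepsilon^{B'A_1'}$ passes through the derivative; the identities (\ref{eq:epsilon}) for $\varepsilon$ collapse the primed contraction to a single free lowered index $A_1'$, while the unprimed raising in $\nabla^{BB'}$ contracts the divergence with the first index of $g$, giving $\nabla^{A}{}_{A_1'}g_{AA_1\ldots A_qA_2'\ldots A_p'}$ after relabelling $A_2\ldots A_{q+1}$ as $A_1\ldots A_q$. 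Finally, since $f$ lies in $\odot^{p}H^{*}$, the pairing $(f,\nabla^{*}v)$ sees only the orthogonal projection of $\nabla^{*}v$ onto $\Lambda^{q}E^{*}\otimes\odot^{p}H^{*}$; the unprimed part is automatically antisymmetric, being the divergence of the antisymmetric $g$, while the primed part must be symmetrized, which produces the symmetrization $(A_1'\ldots A_p')$ and the stated formula $(\mathscr D_{q,p}^{*}g)_{A_1\ldots A_qA_1'\ldots A_p'}=\nabla^{A}_{(A_1'}g_{|AA_1\ldots A_q|A_2'\ldots A_p')}$.

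The routine but delicate part will be the index bookkeeping in $(2)$: one must track the interplay of the unprimed antisymmetrization with the primed symmetrization and, above all, the signs introduced by raising and lowering with the skew forms $\varepsilon$ and $\epsilon$, in order to confirm that the overall numerical coefficient is exactly $1$ with no stray sign. In $(1)$ the only point requiring attention is the verification that the trace--Christoffel terms of $Z_{AA'}^{*}$ reassemble into the contracted-index part of the covariant divergence, which follows by the same relabelling of dummy indices used in transferring the $B_\bullet$ terms; here the reality of $\varepsilon$ and the relation $\overline{Z_{AA'}}=Z^{AA'}$ in (\ref{eq:Z-raise}) guarantee that the conjugations are harmless.
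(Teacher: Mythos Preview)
Your proposal is correct and follows essentially the same approach as the paper: part (1) is proved exactly as you describe, by expanding $\nabla_{AA'}$ via (\ref{eq:covariant-derivatives}), integrating the frame-derivative term by parts using (\ref{eq:Z_AA'}), and using (\ref{eq:unitary-gamma}) to transfer the Christoffel terms onto $\overline{v}$; part (2) is likewise reduced to (1) by dropping the antisymmetrization against the already antisymmetric $g$ (the paper cites (\ref{eq:bracket-omit}) for this), passing the real $\varepsilon$ through the conjugation, and then inserting the primed symmetrization at the end via (\ref{eq:bracket-omit-sys}). The only caution is the sign in your raising convention $\nabla_{A_1}^{A_1'}=\varepsilon^{A_1'C'}\nabla_{A_1C'}$, which differs from the paper's $\nabla_{AB'}\varepsilon^{B'A'}$ by the antisymmetry of $\varepsilon$; as you note, this must be tracked to confirm the final coefficient is $+1$.
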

\begin{proof} (1) For any local section $h\in \Gamma(\mathfrak T_{q,p})$, we have
\begin{equation*}
  \begin{split}(\nabla  h,  f)&=
   \int{\nabla_{BB'}   h_{A_1\ldots A_q A_1'\ldots A_p'}} \overline{  f_{BA_1\ldots A_qB'A_1'\ldots A_p'}}dV_g\\&
  =\int \left[{Z_{ BB'}} {  h_{A_1\ldots A_q A_1'\ldots A_p'}}- {\Gamma_{BB'  A_j}^{\phantom{A A 'A_j }D}} {  h_{A_1\ldots D\ldots A_q
  A_1'\ldots
  A_p'}}\right. \\&
  \left.\qquad\qquad\qquad\qquad\qquad\qquad
  - {\Gamma_{BB'A_j' }^{\phantom{A A 'A_j'}D'}} {  h_{A_1\ldots A_q A_1'\ldots D'\ldots A_p'}}\right]\overline{  f_{BA_1\ldots
  A_qB'A_1'\ldots A_p'}}dV_g\\&
   =\int{{  h}_{A_1\ldots A_qA_1'\ldots A_p'}}\left[ -\overline{Z^{BB'}  } + \overline{\Gamma_{\phantom{BB'}  D }^{DB'\phantom{ D}B  }}
   +   \overline{\Gamma_{\phantom{BB'} D'}^{BD'\phantom{ A_j'}B'} }  \right] \overline{{  f}_{BA_1\ldots A_qB'A_1'\ldots A_p'}}\\&
   \qquad  +\left[{ {  h}_{A_1\ldots D\ldots A_q A_1'\ldots A_p'}} \overline{\Gamma_{\phantom{BB'}  D }^{BB'\phantom{ D}A_j  } }
   +{  h}_{A_1\ldots A_q A_1'\ldots D'\ldots A_p'}  \overline{\Gamma_{\phantom{BB'} D'}^{BB'\phantom{ A_j'}A_j'}}\right] \overline{{
   f}_{BA_1\ldots
   A_qB'A_1'\ldots A_p'} }
   ,
   \end{split}
\end{equation*}
by using (\ref{eq:Z_AA'}) for the expression
of
the formal adjoint $Z_{BB'}^*$ and  (\ref{eq:unitary-gamma})  for the connection coefficients. Then we can relabel indices so that the
right hand side can be written as $(  h, \nabla^*  f)$ with
\begin{equation*}  \begin{split}
   (\nabla^*  f)_{A_1\ldots A_qA_1'\ldots A_p'}
   =&-{Z^{A_0A_0'}}  f_{A_0A_1\ldots A_qA_0'A_1'\ldots A_p'} +{\Gamma_{\phantom{ A_0A_0'}A_j }^{A_0A_0'\phantom{ A_j } E}}  f_{  \ldots
   E \ldots A_0'
   \ldots A_p'}+\Gamma_{\phantom{A_0A_0'}A_j'}^{A_0A_0'\phantom{ B_j'}  E'}  f_{A_0A_1\ldots A_q  \ldots  E' \ldots  }.
 \end{split}\end{equation*}

(2)  For any local sections  $h\in \Gamma(\Lambda^{q   } E^*\otimes \odot^{p }H^*)$ and $f\in \Gamma(\Lambda^{q +1  } E^*\otimes
\odot^{p-1}H^*)$, we have
\begin{equation*}
  \begin{split}(\mathscr D_{q,p  }   h,   f)&
  =\int\nabla_{ [A }^{A '}   h_{A_1\ldots A_q] A 'A_2'\ldots A_p'}\overline{{  f}_{A A_1\ldots A_q A_2'\ldots A_p'}}dV_g
  \\&=-\int\nabla_{ A B ' }
  h_{A_1\ldots A_qA 'A_2'\ldots A_p'}\varepsilon_{B'A'}\cdot \overline{{  f}_{A A_1\ldots A_q A_2'\ldots A_p'}}dV_g \\&
 = \int  h_{A_1\ldots A_qA 'A_2'\ldots A_p'} \overline{\nabla^{ A B ' }   {  f}_{A A_1\ldots A_q A_2'\ldots
 A_p'}\varepsilon_{B'A'}}dV_g\\&
 = \int  h_{A_1\ldots A_qA 'A_2'\ldots A_p'} \overline{  \nabla_{( A '}^{ A }     f_{|A A_1\ldots A_q| A_2'\ldots A_p')}} dV_g
     =(  h,\mathscr D_{q,p }^*  f)  \end{split}
\end{equation*}by using (1) and $\varepsilon^{B'A'}=-\varepsilon_{B'A'}$. Here we drop antisymmetrisation in the second identity by
\begin{equation}\label{eq:bracket-omit}
  \sum_{B_0, \ldots, B_q} \left(    g_{[B_0 \ldots B_q] \mathscr B'},\widetilde{  g}_{[B_0 \ldots B_q] \mathscr B'}\right)=\sum_{B_0,
  \ldots,
  B_q} \left(   g_{ B_0 \ldots B_q  \mathscr B'}, \widetilde{   g}_{ [B_0 \ldots B_q] \mathscr B'}\right)
\end{equation}
for any local sections $  g,\widetilde{ g}\in \Gamma(\otimes^{q+1 } E^* \otimes (\otimes^{p}H^*))$, and add  symmetrisation in the last
identity by
  \begin{equation}\label{eq:bracket-omit-sys}
  \sum_{B_1', \ldots, B_p'} \left(    h_{\mathscr B (  B_1 '\ldots B_p')},\widetilde{  h}_{\mathscr B B_1 '\ldots
  B_p'}\right)=\sum_{B_1', \ldots, B_p'} \left(   h_{ \mathscr B  ( B_1 '\ldots B_p') }, \widetilde{  h}_{ \mathscr B  (  B_1 '\ldots
  B_p') }\right)
\end{equation}
for any local sections $  h,\widetilde{ h}\in  \Gamma(\otimes^{q } E^* \otimes (\otimes^{p}H^*))$.
(\ref{eq:bracket-omit})-(\ref{eq:bracket-omit-sys}) follows from definition easily. See (3.4) in \cite{L2} for the proof of
(\ref{eq:bracket-omit-sys}).
 \end{proof}

\subsection{A   Weitzenb\"ock formula}  The  Weitzenb\"ock  formula for the  $k$-Cauchy-Fueter complex  is much more complicated than
that for the De Rham complex, since we have not only exterior forms, but also symmetric forms. Note that $ D_{j-1}^{(k)*}  f=0$, i.e.
$\mathscr D^*_{q-1 ,p +1}  f=0$ for $q=j$, $p=k-j  $,  implies that
\begin{equation}\label{eq:adjoint-condition}
  \nabla_{ (B_1'}^{ A }  {  f}_{|AB_1\ldots B_{q-1}  |B_2'\ldots B_{p+1}')}=0.
\end{equation}by Proposition \ref{prop:adjoint} (2).
The following is another version of lemma 2.1 of Horan \cite{Ho}  without proof.
\begin{lem}\label{lem:phi-partial}  For a tensor $f\in \Gamma(\mathfrak T_{q,r})$ such that
\begin{equation}\label{eq:condition-sys}
   f_{\mathscr A A_1'A_2'\ldots A_r'} =  f_{\mathscr A
A_1'(A_2'\ldots A_r')}\qquad {\rm and} \qquad    f_{\mathscr A(A_1'A_2'\ldots A_r')}=0 ,
\end{equation}
we have
\begin{equation}\label{eq:condition-sys0}
      f_{\mathscr A A_1'A_2'\ldots A_r'} =-\frac {r-1}r\varepsilon_{A_1'(A_2'}  f_{|\mathscr A |\phantom{ A'}A_3'\ldots
      A_r')C'}^{\phantom{|\mathscr A |
      }C'}.
\end{equation}
\end{lem}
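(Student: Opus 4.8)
The plan is to prove (\ref{eq:condition-sys0}) as a pointwise identity on the two-dimensional fibre of $H$, in which the unprimed multi-index $\mathscr A$ is merely a spectator. I would fix $\mathscr A$ and argue only about the $r$ primed slots of $f$, using both hypotheses in (\ref{eq:condition-sys}): that $f$ is symmetric in its last $r-1$ primed indices, and that its total symmetrisation $f_{\mathscr A(A_1'\ldots A_r')}$ vanishes. First I would record the elementary two-dimensional fact that, since primed indices take only the values $0',1'$, the antisymmetric part of any pair of lower primed slots is a multiple of $\varepsilon$: with the conventions (\ref{eq:epsilon}) one checks directly that $X_{P'Q'}-X_{Q'P'}=-\varepsilon_{P'Q'}\varepsilon^{C'D'}X_{C'D'}$ for any tensor carrying two distinguished lower primed slots $P',Q'$ (all other indices inert).

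Next I would expand the vanishing of the total symmetrisation. Because $f$ is already symmetric in slots $2,\dots,r$, symmetrising over all $r$ slots reduces to averaging over which index occupies the first slot, so $f_{\mathscr A(A_1'\ldots A_r')}=0$ becomes
\[
 f_{A_1'A_2'\ldots A_r'}=-\sum_{j=2}^{r} f_{A_j' A_1'A_2'\ldots \widehat{A_j'}\ldots A_r'},
\]
where $\mathscr A$ is suppressed and $\widehat{A_j'}$ denotes omission of that index.

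The core step is to apply the swap identity to each summand, interchanging the two leading primed slots $A_j'$ and $A_1'$. Writing $s_{A_3'\ldots A_r'}:=f_{\mathscr A}{}^{C'}{}_{A_3'\ldots A_r'C'}$ for the trace appearing in (\ref{eq:condition-sys0}), this gives $f_{A_j'A_1'\cdots}=f_{A_1'A_j'\cdots}+\varepsilon_{A_1'A_j'}\,\varepsilon^{C'D'}f_{C'D'\cdots}$, where by symmetry in slots $2,\dots,r$ the first term is simply $f_{A_1'A_2'\ldots A_r'}$, and the contraction $\varepsilon^{C'D'}f_{C'D'\cdots}$ coincides, after moving the contracted index into the final slot, with $s$ evaluated on the remaining indices $\{A_2',\dots,A_r'\}\setminus\{A_j'\}$. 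Summing the $r-1$ terms and solving for $f$ yields
\[
 r\, f_{A_1'\ldots A_r'}=-\sum_{j=2}^{r}\varepsilon_{A_1'A_j'}\, s_{A_2'\ldots \widehat{A_j'}\ldots A_r'}.
\]
Since $s$ is symmetric in its $r-2$ arguments, this sum equals $(r-1)\,\varepsilon_{A_1'(A_2'}s_{A_3'\ldots A_r')}$, and dividing by $r$ gives precisely (\ref{eq:condition-sys0}).

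I expect the only real difficulty to be bookkeeping rather than conceptual: pinning down the correct sign in the two-dimensional $\varepsilon$-identity under the raising and lowering convention (\ref{eq:epsilon}), and verifying that tracing the first primed index against the last one (as written in the statement) agrees, via the symmetry of $f$, with the trace produced by the swap. Once these sign and slot matchings are settled, the combinatorial weight $\tfrac{r-1}{r}$ emerges automatically from the counting above, and no curvature input is needed—this is a purely algebraic lemma about the $\odot$- and $\varepsilon$-structure on a rank-two bundle.
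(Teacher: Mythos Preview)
Your proof is correct. The key steps---reducing the symmetrisation hypothesis to $f_{A_1'\ldots A_r'}=-\sum_{j=2}^r f_{A_j'A_1'\ldots\widehat{A_j'}\ldots A_r'}$, applying the two-dimensional swap identity $X_{P'Q'}-X_{Q'P'}=-\varepsilon_{P'Q'}\varepsilon^{C'D'}X_{C'D'}$ to each summand, and collecting the $(r-1)$ copies of $f_{A_1'\ldots A_r'}$ on the left---are all sound, and the identification of $\varepsilon^{C'D'}f_{C'D'\cdots}$ with the trace $f^{C'}{}_{\cdots C'}$ via symmetry in slots $2,\dots,r$ is exactly right under the paper's conventions.

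The paper argues differently: it fixes the value $A_1'\in\{0',1'\}$ and the total number $l$ of $1'$s among the primed indices, uses the symmetrisation hypothesis to obtain the two-term relation $(r-l)f_{\mathscr A 0'0'\ldots 1'\ldots}+l\,f_{\mathscr A 1'0'\ldots 0'1'\ldots}=0$, and then evaluates the right-hand side of (\ref{eq:condition-sys0}) explicitly using this relation and the formula (\ref{eq:up-down}) for the trace. Your route avoids this case splitting entirely by working abstractly with the $\varepsilon$-identity; it is cleaner and makes the combinatorial factor $(r-1)/r$ appear transparently from counting the $r-1$ swaps. The paper's computation, by contrast, is more concrete and makes the two-dimensional nature of the primed indices explicit at every step. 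Both are purely algebraic and equally short.
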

\begin{proof} Suppose that $A_1'+\cdots +A_r'=l$, i.e. there are $l$'s $1'$ in $\{A_1',\ldots, A_r'\}$. Note that
(\ref{eq:condition-sys})
 implies that $ f_{\mathscr A A_1'(A_2'\ldots A_r')} + \cdots+ f_{\mathscr A A_j'(A_2'\ldots A_1' \ldots A_r')}+\cdots=0 $, and so
\begin{equation}\label{eq:1-0}
  (r-l)   f_{\mathscr A0'0'\ldots 0'1'1'\ldots}+l  f_{\mathscr A1'0'\ldots  0' 0'1'\ldots }=0.
\end{equation} Assume that $A_1'=0'$ in (\ref{eq:condition-sys0}).
Then
\begin{equation*}\begin{split}
    \varepsilon_{0'(A_2'}  f_{|\mathscr A |\phantom{ A'}A_3'\ldots A_r')C'}^{\phantom{|\mathscr A | }C'}&=\frac l{r-1}\varepsilon_{0'1'}
    f_{ \mathscr A
    \phantom{ A'}0' \ldots 0'1'\ldots 1'C'}^{\phantom{ \mathscr A  }C'}=\frac l{r-1}( - f_{ \mathscr A 0'0' \ldots 0'1'\ldots 1'1'} +
    f_{ \mathscr A
     1'0' \ldots 0'1'\ldots 1'0'})\\&=-\frac r{r-1}   f_{ \mathscr A  0'0' \ldots 0'1'\ldots 1'1'}
\end{split}\end{equation*}
by using  \begin{equation}\label{eq:up-down}
  f_{\ldots\phantom{ B'}\ldots B'\ldots }^{\phantom{\ldots } B'  }= - f_{\ldots 0' \ldots 1' \ldots}+ f_{\ldots 1' \ldots 0' \ldots} =-
  f_{\ldots B'\ldots\phantom{ B'} \ldots }^{\phantom{\ldots B'\ldots} B'  }
.
\end{equation} in the second identity and (\ref{eq:1-0}) in the last identity. It is similar for $A_1'=1'$.
\end{proof}

{\it Proof of Proposition \ref{prop:Weitzenbock}}. Since $ D_j^{(k)}  f=0$, i.e.
$\mathscr D_{q,p}  f=0$  for $q=j$, $p=k-j  $, we have
\begin{equation}\label{eq:=0}
 0 =  (\mathscr D_{q,p }   f,  \mathscr D_{q,p }  f)= ( \mathscr D_{q,p }^*\mathscr D_{q,p }   f,   f).
\end{equation}
 Then by choose local orthonormal quaternionic frame as before, locally we have
$
   (  \mathscr D_{q,p }^*\mathscr D_{q,p }   f )_{B_1\ldots B_q  B_1'\ldots
 B_p'}=
 \nabla^{ A}_{( B_1' }\nabla^{A'}_{|[A}  f_{B_1\ldots B_q]A'|B_2'\ldots B_p')}.$
To calculate\begin{equation*}\left(\nabla^{ A}_{( B_1' }\nabla^{A'}_{|[A}  f_{B_1\ldots B_q]A'|B_2'\ldots B_p')}, f_{B_1\ldots B_qB_1'
B_2'\ldots B_p' }\right ),\end{equation*} we only need to calculate the term without  symmetrisation by (\ref{eq:bracket-omit-sys}),
i.e.
\begin{equation}\label{eq:curvature-term-00}\begin{split}
   \nabla^{ A}_{  B_1' }\nabla^{A'}_{ [A}  f_{B_1\ldots B_q]A' B_2'\ldots B_p' }  &    = \varepsilon_{B'B_1'}
 \left(\nabla^{ A [B' }\nabla^{A']}_{[A} + \nabla^{ A (B' }\nabla^{A')}_{[A}\right)  f_{B_1\ldots B_q]A'B_2'\ldots B_p'}.
\end{split}\end{equation}
For the first term in the right hand side, note that \begin{equation*}\begin{split} \nabla^{ A [B' }\nabla^{A']}_{C} &=
   \varepsilon_{B'A'} \nabla^{ A [0' }\nabla^{1']}_{C} =\frac {\varepsilon_{B'A'}}2\left ( \nabla^{ A  0' }\nabla^{1' }_{ C}- \nabla^{ A
   1' }\nabla^{0'
   }_{C}\right) =- \frac {\varepsilon_{B'A'}}2 \left (\nabla^{ A0' }\nabla_{ C0'}+\nabla^{ A 1'
  }\nabla_{ C 1'}\right),
\end{split}\end{equation*}by $\varepsilon_{0'1'}= 1$,  and  $\sum_{B'=0',1'}\varepsilon_{B'A'} \varepsilon_{B'B_1'} =\delta_{A'B_1'}$.
 If
  we display the antisymmetrisation of the unprimed indices in the second term in (\ref{eq:curvature-term-00}), we see that
   \begin{equation}\label{eq:curvature-term-0}
  \begin{split}
 & \nabla^{ A}_{  B_1' }\nabla^{A'}_{ [A}  f_{B_1\ldots B_q]A' B_2'\ldots B_p' } = - \frac 12   \nabla^{ A
 C'}\nabla_{C'[A}f_{B_1\ldots B_q] B_1'\ldots B_p'}
  \\& \qquad \qquad \qquad   \qquad    + \frac{\varepsilon_{B'B_1'}}{q+1} \left  (\nabla^{ A (B' }\nabla^{A')}_{
 A} f_{B_1\ldots   B_q A'B_2'\ldots B_p'}+  \sum_{s=1}^q (-1)^s \nabla^{ A (B' }\nabla^{A')}_{B_s} f_{ \ldots A \ldots    A'B_2'\ldots
 B_p'} \right )  \\
 &= - \frac 12   \nabla^{ A    C'}\nabla_{C'[A}f_{B_1\ldots B_q] B_1'\ldots B_p'}+ \frac{\varepsilon_{B'B_1'}}{q+1} \nabla^{ A (B'
 }\nabla^{A')}_{
 A}   f_{B_1\ldots   B_q A'B_2'\ldots B_p'} \\&  \qquad  \qquad\qquad    \qquad    + \frac{\varepsilon_{B'B_1'}}{q+1}
 \left(\sum_{s=1}^q     {(-1)^s   }\left (\nabla^{ A (B' }\nabla^{ A')}_{B_s}-\nabla^{( A' }_{B_s}\nabla^{ B' )A } \right)
 \right.\\&\left.\qquad  \qquad \qquad  \qquad \qquad    \qquad   + \frac {1 }2\sum_{s=1}^q     {(-1)^s   }  \nabla^{B' }_{B_s}\nabla^{
 A' A }  +  \frac {1   }2 \sum_{s=1}^q     {(-1)^s   }\nabla^{
 A' }_{B_s}\nabla^{ B'  A }  \right )   f_{B_1\ldots A \ldots  B_q A'B_2'\ldots B_p'}
\\& :=(S_1f+ S_2f+S_3f+S_4f+S_5f)_{B_1\ldots B_q  B_1'\ldots
 B_p'}.\end{split}
\end{equation} Now we get $0=(S_1f,f) +\ldots +(S_5f,f)$ by   (\ref{eq:=0}). The reason we use the expansion   above is that $S_2f$ and
$S_3f$ are commutators of the form $\nabla_{[A}^{  (A'
 }\nabla^{B ')}_{ B]}$, which are curvature terms.
   Obviously by using (\ref{eq:bracket-omit}) and Proposition \ref{prop:adjoint}, we have
  \begin{equation}\label{eq:S1}
      (S_1f,f)= \frac 12\left ( {\nabla}^* \widehat{\nabla}{  f} ,{{  f}}   \right )  = \frac 12\left ( \widehat{\nabla}{  f} ,
      {\nabla}{{  f}} \right )= \frac 12\left ( \widehat{\nabla}{  f} , \widehat{\nabla}{{  f}}   \right )   \end{equation}
by antisymmetrisation  (\ref{eq:bracket-omit}) in the second identity.

Recall that   $\Phi=0$ for   quaternionic K\"ahler manifold, and so we have
\begin{equation}\label{eq:Kahler-vanishing}
   R_{  \phantom{ ( A'B' ) } A B C
   }^{ ( A'B')\phantom{  C A B_j } D  } =0,\qquad R_{  AB \phantom{ A'B'} C'  }^{\phantom{   AB } A'B'\phantom{C'}D '} = 2\Lambda_{
   AB}\delta_{
   C'}^{\phantom{
 B'}(A'}\varepsilon^{B')D'}
\end{equation}
by Corollary \ref{lem:R-upper} and Proposition \ref{prop:curvature}. Note that
\begin{equation} \label{eq:curvature-term-S1''}
  \begin{split}   {\varepsilon_{B'B_1'}}
      \nabla_{[C}^{  (B'
 }\nabla^{B_0')}_{ A]}  f_{B_1\ldots B_q B_0'B_2'\ldots B_p'}
   =& - \frac 12  \varepsilon_{B'B_1'}  \left( R_{ [C A]\phantom{ B'   B_0' }B_j '
   }^{\phantom{ [C A] }  B' B_0' \phantom{ D' } D' }  f_{B_1\ldots B_q B_0' \ldots D'\ldots B_p'}\right.\\&\left.\qquad\qquad\quad+  R_{
   \phantom{ ( B' B_0') } C A B_j
   }^{ ( B' B_0')\phantom{  C A B_j  } D  }  f_{B_1\ldots D\ldots B_qB_0' \ldots  B_p'}\right) \\
   =&   -   {\varepsilon_{B'B_1'}}  \Lambda_{ C A   } \delta_{B_j'}^{\phantom{B_j'} ( B' }  \varepsilon^{B_0')D'} f_{B_1\ldots B_q
   \ldots D'\ldots
   B_p'}
  \\
   =&    \frac {p+2}2   \epsilon_{ C A   }\Lambda   f_{B_1\ldots B_q B_1'  B_2'\ldots
   B_p'}
     \end{split}
\end{equation}by using  (\ref{eq:bracket-curvature}) for the  commutator, Proposition \ref{prop:Einstein} for $\Lambda_{AB}$ and
(\ref{eq:Kahler-vanishing}), and for $j= 2,\ldots, p$,
\begin{equation*}\begin{split}
   2\varepsilon_{B'B_1'} \delta_{B_j'}^{\phantom{B_j'} ( B' }  \varepsilon^{B_0')D'} f_{B_1\ldots B_q B_0' \ldots D'\ldots
   B_p'}=& \left(\varepsilon_{B_j'B_1'}   \varepsilon^{B_0' D'}  +\delta_{B_j'}^{\phantom{B_0'}   B_0' } \varepsilon_{B 'B_1'}
   \varepsilon^{B'  D'} \right) f_{B_1\ldots B_q B_0' \ldots D'\ldots
   B_p'}\\=&- f_{B_1\ldots B_q B_j' \ldots B_1'\ldots
   B_p'}
  \end{split}\end{equation*}
since
  $\varepsilon^{ B_0' D'} $  is antisymmetric and $f_{ \ldots}$ is symmetric in primed indices, while for $j=0$,
  \begin{equation*}
     \begin{split}
   2\varepsilon_{B'B_1'} \delta_{B_0'}^{\phantom{B_j'} ( B' }  \varepsilon^{B_0')D'} f_{B_1\ldots B_q  D'B_2'\ldots
   B_p'}=& \left(\varepsilon_{B_0'B_1'}   \varepsilon^{B_0' D'}  +\delta_{B_0'}^{\phantom{B_0'}   B_0' } \varepsilon_{B 'B_1'}
   \varepsilon^{B'  D'} \right) f_{B_1\ldots B_q D'B_2'\ldots
   B_p'}\\=&- 3f_{B_1\ldots B_q B_1'B_2'\ldots
   B_p'}.
  \end{split}
  \end{equation*}
   Then, lowering the superscript $A$ in $S_2f$  and applying (\ref{eq:curvature-term-S1''}) to $S_2f$, we  get
\begin{equation}\label{eq:curvature-term-S1'}
  \begin{split} (S_2f)_{B_1\ldots B_q  B_1'\ldots
 B_p'} =   - \frac { p+2} {q+1}  n  \Lambda{  f}_{B_1\ldots B_q   B_1'\ldots B_p'  },
    \end{split}
\end{equation}
 and similarly, applying (\ref{eq:curvature-term-S1''}) to $S_3f$, we  get
 \begin{equation*}
  \begin{split} (S_3f)_{B_1\ldots B_q  B_1'\ldots
 B_p'} = &2 \sum_{s=1}^q \frac {(-1)^s \epsilon^{CA}}{q+1} \varepsilon_{B'B_1'} \nabla_{[C}^{  (B'
 }\nabla^{B_0')}_{ B_s]}   f_{B_1\ldots A \ldots
 B_qB_0'B_2' \ldots
  B_p'} \\ =&  \sum_{s=1}^q \frac {p+2
   }{ q+1 }  (-1)^s \epsilon^{CA}\epsilon_{ C B_s   }   \Lambda{
   f}_{B_1\ldots   A \ldots  B_q  B_1'\ldots B_p'  }  =  \frac { 1-(-1)^q}{ 2  } \frac {p+2}{q+1}\Lambda  {
   f}_{   B_1\ldots  B_q  B_1'\ldots B_p'  }.
    \end{split}
\end{equation*}   Hence
\begin{equation}\label{eq:S2-3}
 (S_2f,f) + (S_3f,f)=-    \frac {  2n-1+(-1)^q }{ 2(q+1) }(p+2)\left ( \Lambda{  f} , {{  f}}   \right ).
\end{equation} Similarly we have
\begin{equation*}\begin{split}
(S_4f)_{B_1\ldots B_q  B_1'\ldots
 B_p'}&=     \sum_{s=1}^q\frac { -1 }{2(q+1)}   \nabla_{B_s B_1'}\nabla^{ A' A }    f_{A B_1\ldots\widehat  B_s \ldots  B_qA' B_2'\ldots
 B_p'}  \\
&= \frac {1}{2(q+1)} \sum_{s=1}^q\left(\nabla \nabla^{ * }    f\right)_{B_s B_1\ldots  \widehat  B_s \ldots  B_q B_1' B_2' \ldots B_p'}
    \end{split}\end{equation*} by $f $  antisymmetric in unprimed indices  and  the expression of $\nabla^{ * } $ in  Proposition
    \ref{prop:adjoint}, and
\begin{equation*}\begin{split}
 (S_5&f)_{B_1\ldots B_q  B_1'\ldots
 B_p'}= - \sum_{s=1}^q\frac {1 }{2(q+1)}    \nabla^{ A' }_{B_s}\nabla^{   A }_{B_1'}    f_{A B_1\ldots\widehat  B_s \ldots
 B_qA'B_2'\ldots B_p'}  \\&
=  \sum_{s=1}^q\frac {1}{2(q+1)} \frac {p }{p+1}  \nabla^{ A' }_{B_s}\varepsilon_{B_1'(A' }\nabla^{ A C' }    f_{|A B_1\ldots\widehat
B_s \ldots     B_q
|B_2'\ldots B_p')C'}
\\&
=  \sum_{s=1}^q\frac {1}{2(q+1)}\frac { 1}{p+1}\left[ - \nabla_{ B_s B_1' }  \nabla^{ A C' }    f_{A B_1\ldots \widehat  B_s \ldots
B_q B_2'\ldots B_p' C'}
+ \varepsilon_{B_1' B_j' }  \nabla^{ A' }_{B_s}  \nabla^{ A C' }    f_{A B_1\ldots \widehat  B_s \ldots      B_q B_2'\ldots A' \ldots
B_p' C'}\right]\\&
=   \frac { 1 }{2(q+1)(p+1)}  \sum_{s=1}^q  \left(\nabla \nabla^{ * }    f\right)_{B_s B_1\ldots \widehat  B_s \ldots   B_q B_1' B_2'
\ldots B_p'} +\widetilde{S}_5 f
    \end{split}\end{equation*}
where in the second identity we apply Lemma \ref{lem:phi-partial} to $ \nabla_{ B_1'}^{ A }    f_{ B_1\ldots A \ldots  B_q A'B_2'\ldots
B_p'} $ with
$r=p+1$,  since
the condition of this lemma is satisfied by (\ref{eq:adjoint-condition}). Note that  $(\widetilde{S}_5 f,f)=0$ by $f$  symmetric but
$\varepsilon_{B_1' B_j' }$
antisymmetric in $B_1', B_j'$. Therefore
\begin{equation}\label{eq:S4-5}
 (S_4f,f) + (S_5f,f)=     \frac {  (p+2)  }{2(q+1)(p+1)} \sum_{s=1}^q (-1)^{s-1}\left ( \nabla^{ * }    f ,\nabla^{ * }    f \right )=
 \frac
 {  (1-(-1)^q) (p+2)  }{4(q+1)(p+1)}  \left ( \nabla^{ * }    f ,\nabla^{ * }    f \right ).
\end{equation}
Substituting (\ref{eq:S1}),  (\ref{eq:S2-3}) and (\ref{eq:S4-5}) into $0=(S_1f,f) +\ldots +(S_5f,f)$, we get  the  identity
(\ref{eq:Weitzenbock}). \hskip 12mm$\Box$

\begin{appendix}
 \section{ Proof of some propositions}
  At first, the traces of curvatures vanish when we  antisymmetrise  primed (or unprimed) indices and symmetrise  unprimed (or primed) indices.
 \begin{prop} The curvatures of a unimodular   quaternionic   manifold satisfy
    \begin{equation}\label{eq:vanishing}\begin{split}
   R_{[A'B'] C(A B) }^{\phantom {[A'B'] C(A B) }   C }&
    =0,\,\,\, \qquad R_{(A'B') C[A B] }^{\phantom {(A'B') C[A B]  }   C }=0,\\ R_{[ A B ] C'(A' B') }^{\phantom {[ A B ] C'(A' B') }   C' }
    &=0, \qquad R_{( A B ) C'[A' B'] }^{\phantom {( A B ) C'[A' B']  }   C '}=0.
\end{split}\end{equation}
 \end{prop}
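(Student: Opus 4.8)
The plan is to obtain all four identities from the first Bianchi identity of the torsion-free quaternionic connection, used together with the vanishing of the full endomorphism traces. The four quantities in (\ref{eq:vanishing}) are Ricci-type contractions of the two component curvatures $R_{A'B'ABC}{}^{D}$ and $R_{ABA'B'C'}{}^{D'}$ of (\ref{eq:decomposition}): one contracts a single tangent index of the curvature $2$-form against the endomorphism (bundle) index, and then projects onto the mixed symmetry type---antisymmetric in one two-component index pair and symmetric in the other. Exactly the same contractions in the \emph{pure} symmetry types produce the nonzero invariants $\Lambda_{AB}$ and $\Phi_{ABA'B'}$ of (\ref{eq:curvature-0}), so the content of the proposition is that the two mixed types vanish. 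The two facts I would use freely are: (i) the full endomorphism traces vanish, $R_{A'B'ABC}{}^{C}=0$ and $R_{ABA'B'C'}{}^{C'}=0$ (the Corollary containing (\ref{eq:vanishing-trace}), itself a consequence of $\nabla\epsilon=0$ and $\nabla\varepsilon=0$ through the generalized Ricci identity (\ref{eq:Ricci})); and (ii) the first Bianchi identity $R_{[abc]}{}^{d}=0$, the antisymmetrisation running over the three tangent index pairs $a=AA'$, $b=BB'$, $c=CC'$.

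First I would insert the decomposition (\ref{eq:decomposition}) into the first Bianchi identity, with the third tangent pair $CC'$ playing the role of the lower endomorphism index, and then trace the upper endomorphism index $DD'$ against a tangent index to form a Ricci-type contraction. Projecting onto the $E$-part (resp. the $H$-part) by contracting the primed (resp. unprimed) endomorphism indices and invoking (i) to kill the complementary trace, I would expand the antisymmetrisation over the three coupled pairs into separate (anti)symmetrisations of the three primed and the three unprimed indices. After this expansion the identity reads as a linear relation among only two kinds of terms: the mixed-symmetry contractions of (\ref{eq:vanishing}) and the full endomorphism traces. By (i) the latter drop out, and what survives forces the targeted contraction to vanish. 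I would carry out the computation in detail for the first identity $R_{[A'B']C(AB)}{}^{C}=0$ and deduce the other three by the symmetry interchanging primed and unprimed indices together with $E\leftrightarrow H$; the two $H$-curvature identities use $\nabla\varepsilon=0$ (\ref{eq:preserve}) where the $E$-curvature ones used $\nabla\epsilon=0$ (\ref{eq:volume-preserve-unprimed}).

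The main obstacle will be the two-component bookkeeping in this middle step. The first Bianchi identity antisymmetrises over the three \emph{coupled} pairs $(A_i,A_i')$, and turning this into the language of separate primed and unprimed (anti)symmetrisations is precisely the place where antisymmetry in one index family becomes tied to symmetry in the other. The delicate point is to isolate the mixed-symmetry component cleanly and keep it apart from the pure-symmetry components, which must \emph{not} be set to zero since they carry $\Lambda_{AB}$ and $\Phi_{ABA'B'}$. This requires tracking the combinatorial coefficients exactly---including the trace factors $\delta_{A}{}^{A}=2n$ and $\delta_{A'}{}^{A'}=2$---so that the pure-symmetry contributions are correctly excluded and only the vanishing endomorphism traces remain to be cancelled.
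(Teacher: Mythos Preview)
Your plan has the right ingredients---first Bianchi identity, the decomposition (\ref{eq:decomposition}), and the trace vanishing (\ref{eq:vanishing-trace})---but there is a genuine gap at the last step. A single contraction of the Bianchi identity, once projected onto the mixed symmetry type, does \emph{not} isolate one of the four quantities: it produces a linear relation between \emph{two} of them. Concretely, tracing $C'$ with $D'$ and then $C$ with $D$ in the cyclic form of Bianchi and projecting onto $[A'B']$, $(AB)$ gives
\[
R_{[A'B']C(AB)}{}^{C}+R_{(AB)C'[A'B']}{}^{C'}=0,
\]
which couples identity~\#1 with identity~\#4. This alone does not force either to vanish. (Your claim that ``what survives forces the targeted contraction to vanish'' would need one unknown, and you have two.) Likewise the $E\leftrightarrow H$ symmetry exchanges \#1 with \#3 and \#2 with \#4, so it does not decouple the pair either.

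What the paper does---and what your outline is missing---is to produce a \emph{second}, linearly independent relation by contracting the Bianchi identity against a \emph{different} tangent index. Starting again from the $C'$-$D'$ trace but now tracing $B$ with $D$ (rather than $C$ with $D$) introduces a Kronecker $\delta_{A}{}^{A}=2n$ and yields
\[
R_{[A'B']C(AB)}{}^{C}+(2n+1)\,R_{(AB)C'[A'B']}{}^{C'}=0.
\]
The $2\times 2$ system with coefficient rows $(1,1)$ and $(1,2n+1)$ is nonsingular, so both quantities vanish. The second pair (\#2 and \#3) is handled analogously: one relation with coefficients $(1,-1)$ from the same double trace, and a second with coefficients $(3,2n-1)$ obtained by first tracing $C$-$D$ in Bianchi and then $B'$-$D'$. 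You correctly anticipated that the factors $2n$ and $2$ matter; the point is that they enter only when you perform this second, inequivalent contraction, and that is what makes the system solvable.
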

 \begin{proof} The first Bianchi identity \begin{equation}\label{eq: Bianchi1}
    R_{[abc]}^{\phantom{[abc]}d}=0,
 \end{equation} and the antisymmetry of $R_{abc}^{\phantom {abb}d}$ in $a$ and $b$ implies
  the cyclicity:
 \begin{equation}\label{eq:cyclicity}
    R_{ AA'B B' C C' }^{\phantom{AA'B B' C C'}     DD'}+
    R_{B B' C C' AA' }^{\phantom{B B' C C' AA' }     DD'}+
    R_{ C C' AA'B B'}^{\phantom{C C' AA'B B'}    DD'}=0
 \end{equation}
 for all $A,\ldots,A',\ldots$.
  Take trace over $C'$-$D'$ in (\ref{eq:cyclicity}) to get
 \begin{equation}\label{eq: Trace-C'-D'-2}
    2R_{A'B' AB C   }^{\phantom {A'B' AB C }     D }+R_{B'A'B CA  }^{\phantom {B'A'B CA}      D }+R_{B'A' CA B }^{\phantom {B'A' CA B}     D }
    +R_{BC B' C'A'   }^{\phantom {BC B' C'A'}      C' }\delta_{A }^{\phantom {A}D }+R_{C A C' A'B'   }^{\phantom {C A C' A'B' }      C' }\delta_{B
    }^{\phantom {A}D }=0.
 \end{equation} by the decomposition  (\ref{eq:decomposition}) of curvatures. Here and in the sequel we use
  vanishing of  traces in
  (\ref{eq:vanishing-trace}) repeatedly.

 Taking trace over  $C$-$D$  in (\ref{eq: Trace-C'-D'-2}), we get
 \begin{equation} \label{eq: Trace-C-D}
    R_{B'A'B CA  }^{\phantom {B'A'B CA}     C}+R_{B'A' CA B }^{\phantom {B'A' CA B}      C }
    +R_{BA B' C'A'   }^{\phantom {BA B' C'A' }      C' } +R_{B A C' A'B'   }^{\phantom {B A C' A'B'}    C' } =0,
 \end{equation}by using (\ref{eq:vanishing-trace}) again.
 Antisymmetrising $[A'B']$ and symmetrising $(A B)$ in (\ref{eq: Trace-C-D}), we get
 \begin{equation}\label{eq:vanishing-[B'A']-(A B)}
   R_{[B'A'] C(A B) }^{\phantom {[B'A'] C(A B)}      C }+R_{(A B) C'[ A' B'] }^{\phantom {(BA ) C'[ A' B']}      C '}
    =0,
 \end{equation}
 by  using
  \begin{equation}\label{eq:sym}
   R_{[B'A'] B C A}^{\phantom {[B'A']B CA  }      D}= R_{[B'A']C B A }^{\phantom {[B'A']CBA  }     D}\qquad {\rm  and} \qquad
   R_{(BA)B'C'A' }^{\phantom {(BA)  B' C' A'} D' }=-R_{(BA) C'B'A'   }^{\phantom {(BA)C' B'A'}     D' }  \end{equation}
   which follows from antisymmetry
   \begin{equation}\label{eq:antisym}
   R_{ A'B' A B C }^{\phantom { B'A' B CA  }      D}=-R_{B'A' B A C }^{\phantom { B'A' B CA  }      D},\qquad  R_{ A B  A' B' C' }^{\phantom { B'A' B CA'
   }      D'}=-R_{B A  B' A 'C '}^{\phantom { B'A' B CA'  }      D'},\qquad \end{equation} i.e., $\quad  R_{ ab C }^{\phantom {abA  }      D}=-R_{ba C
   }^{\phantom { baA  }      D}$ and $\quad  R_{ ab C' }^{\phantom {abA'  }      D'}=-R_{ba C' }^{\phantom { baA'  }      D'}$.
Here  (\ref{eq:sym}) means that $R_{[B'A']B C A }^{\phantom {[B'A']CBA  }     D}$ is symmetric in $B,C$, while $ R_{(BA)B'C'A' }^{\phantom {(BA)  B' C'
A'} D' }$ is antisymmetric in $B',C'$. These identities  will be used frequently to change the order of indices.

Taking trace over  $B$-$D$  in (\ref{eq: Trace-C'-D'-2}),
  we get
 \begin{equation} \label{eq: Trace-B-D}
     2R_{A'B' AB C   }^{\phantom {A'B' AB C }      B }+R_{B'A'B CA  }^{\phantom {B'A'B CA }      B }
    +R_{AC B' C'A'   }^{\phantom {AC B' C'A'}     C' } +2nR_{C A C' A'B'   }^{\phantom {C A C' A'B'}     C' } =0.
 \end{equation}
Antisymmetrising $[A'B']$ and symmetrising   $(AC)$ in (\ref{eq: Trace-B-D}), we see that
 \begin{equation}\label{eq:vanishing-[B'A']-(A B)2}
    R_{[A'B'] B (A C)   }^{\phantom {[A'B'] B (A C)}      B }+(2n+1)R_{( A C) C' [A'B']   }^{\phantom {(C A) C' [A'B']}      C' }=0,
 \end{equation} by (\ref{eq:sym}).
Now  (\ref{eq:vanishing-[B'A']-(A B)}) and (\ref{eq:vanishing-[B'A']-(A B)2}) imply that the first and the last identities in
  (\ref{eq:vanishing}).

Symmetrising $(A' B')$ and antisymmetrising $[A B ]$ in (\ref{eq: Trace-C-D}), we get
 \begin{equation}\label{eq:()-[]}
     R_{(B'A') C[A B] }^{\phantom {(B'A') C[A B]}      C }
  -   R_{[A B]C' (B'A')   }^{\phantom {[A B]C' (B'A')}     C' } =0.
 \end{equation}
On the other hand, we take trace over $C$-$D$ in (\ref{eq:cyclicity}) to get
  \begin{equation}\label{eq: Trace-C-D-1}
   R_{B'C'B CA   }^{\phantom {B'C'B CA }    C }\delta_{A' }^{\phantom { B'}D' }+R_{C'A' C A B }^{\phantom {C'A' C A B }     C }\delta_{B' }^{\phantom {
   B'}D' }
  +2nR_{ A B A' B' C'  }^{\phantom {A B A' B' C'  }     D' }  +R_{  B A B' C'A'   }^{\phantom { B A B' C'A'}      D' } +R_{BA C' A'B'   }^{\phantom {BA C'
  A'B'}     D' } =0.
 \end{equation}
Taking trace over $B'$-$D'$  in (\ref{eq: Trace-C-D-1}), we get
 \begin{equation}\label{eq: Trace-C-D-2}
     R_{ A'C' B CA   }^{\phantom { A'C' B CA}     C } +2R_{C'A' C A B }^{\phantom {C'A' C A B}      C }
  +2nR_{ A B A' B' C'  }^{\phantom {A B A' B' C' }      B' }  +R_{  BA B'  C'A'   }^{\phantom {BA B'  C'A'}      B' } =0.
 \end{equation}
Then symmetrising $(A' C')$ and antisymmetrising $[A B ]$    in (\ref{eq: Trace-C-D-2}), we get
 \begin{equation*}
      3R_{(A'C') C [A B] }^{\phantom {(C'A') C [A B]}     C }
  +(2n-1)R_{ [A B] B'(A' C')  }^{\phantom { [A B] B'(A' C')}     B' }  =0,
 \end{equation*}by using (\ref{eq:sym}) again.
This together with (\ref{eq:()-[]}) implies that the second and the third identity in
  (\ref{eq:vanishing}).
The proposition   is proved.
\end{proof}

{\it Proof of Proposition \ref{prop:curvature}.} (1)  The first identity in (\ref{eq:curvature}) follows directly from the definition of   $\Psi$ in
(\ref{eq:curvature-0}).
\vskip 3mm
(2)   Now   antisymmetrising $[A'B']$ and $[AC]$   in (\ref{eq: Trace-B-D}), we see that
 \begin{equation*}
    3 R_{[A'B'] B [A C]   }^{\phantom{ [A'B'] B [A C] }    B }-(2n+1)R_{[ A C] C' [A'B']   }^{\phantom{[ A C] C' [A'B'] }    C' }=0
 \end{equation*}
by using (\ref{eq:sym}) again. Namely
 \begin{equation}\label{eq:Lambda}
R_{[A'B'] C[A B] }^{\phantom {[A'B'] C[A B]}      C }=(2n+1) \Lambda_{AB} \varepsilon_{A'B'}
 \end{equation} by the definition of $\Lambda$ in (\ref{eq:curvature-0}). On the other hand  symmetrising $(A' B')$ and   $(A C)$ in (\ref{eq: Trace-B-D}), we get
 \begin{equation*}
  -   R_{(A'B') B (A C)   }^{\phantom{(A'B') B (A C) }     B }+(2n-1)R_{( A C) C' (A'B')  }^{\phantom{( A C) C' (A'B')}     C' }=0.
 \end{equation*}
Then  by the definition of   $\Phi$ in (\ref{eq:curvature-0}), we get
\begin{equation}\label{eq:Phi}R_{(A'B') C(A B) }^{\phantom{(A'B') C(A B)}    C }=
  (2n-1)  \Phi_{A B A'B' } .
  \end{equation}

\vskip 3mm

(3) Antisymmetrising $[A'B']$ and symmetrising  $(A B)$ in (\ref{eq: Trace-C'-D'-2}), we get
 \begin{equation}\label{eq: A-B-C-D}
    2R_{[A'B'] AB C   }^{\phantom {[A'B'] AB C }     D }-2R_{[A'B'] C (A B )  }^{\phantom {[A'B'] C (B A) }     D }
    +R_{C B  C'[ A'B']   }^{\phantom {C B  C'[ A'B']}      C' }\delta_{A }^{\phantom {C}D }+R_{ C A  C' [A'B']   }^{\phantom {C A  C' [A'B']}     C'
    }\delta_{B }^{\phantom {C}D }=0,
 \end{equation}
 by using the  antisymmetry
 (\ref{eq:antisym}). It follows from   the last identity in   (\ref{eq:vanishing}) and the definition of $\Lambda$ in (\ref{eq:curvature-0}) that
 \begin{equation*}
   R_{C B  C'[ A'B']   }^{\phantom {C B  C'[ A'B']}      C' }\delta_{A }^{\phantom {C}D }=R_{[C B]  C'[ A'B']   }^{\phantom {(C B)  C'[ A'B']  }      C'
   }\delta_{A }^{\phantom {C}D }=3\Lambda_{C B   }\delta_{A }^{\phantom {C}D }\varepsilon_{A'B'}.
    \end{equation*}
Thus the sum of the last two terms in  (\ref{eq: A-B-C-D}) is equal to $6\Lambda_{ C(B}  \delta_{A) }^{\phantom {(C}D } $. Then by    the trivial identity $2\Lambda_{ C(B}  \delta_{A) }^{\phantom {(C}D }  =\delta_{(C }^{\phantom {(C}D }\Lambda_{A) B   }+\delta_{(C }^{\phantom {(C}D
}\Lambda_{B) A   }$ and  the definition of $\Psi$ in (\ref{eq:curvature-0}),
 we see  that (\ref{eq: A-B-C-D}) is equivalent to
 \begin{equation*}
    \Psi_{  AB  C   }^{\phantom { AB  C }     D }=\Psi_{ C  (AB) }^{\phantom {C  (AB)}      D }. \end{equation*} Then  $\Psi_{  AB  C   }^{\phantom { AB C  } D }=\Psi_{ (AB C )  }^{\phantom {(AB C) } D } $ since $\Psi_{  AB  C   }^{\phantom { AB C  } D } $ is symmetric in $A,B$ by definition. $\Psi_{  AB  C   }^{\phantom { AB C  } A }=0$ by symmetrising $(BC)$ in the definition (\ref{eq:curvature-0}) of $\Psi$ and using the first identity in (\ref{eq:vanishing}).

\vskip 3mm

(4)
   Symmetrise $(A'B')$ and  antisymmetrise  $[A B]$ in (\ref{eq: Trace-C'-D'-2}) to get
\begin{equation*}\label{eq:R('')[]-0}
  2R_{(A'B') AB C   }^{\phantom{(A'B') AB C }     D }+2R_{(A'B')  C [AB] }^{\phantom{(A'B')  C [AB] }    D }
    -R_{C B  C'(A'B')    }^{\phantom{C B  C'(A'B')}    C' }\delta_{A }^{\phantom {C}D }+ R_{ C A  C' (A'B')    }^{\phantom{C A  C' (A'B')}    C'
    }\delta_{B }^{\phantom {C}D }=0.
\end{equation*}
Apply (\ref{eq:vanishing}) and  the definition of $\Phi$ in (\ref{eq:curvature-0}) to the last two terms above to get
\begin{equation}\label{eq:R('')[]}
   R_{(A'B') AB C   }^{\phantom{ (A'B') AB C }   D }+R_{(A'B')  C [AB] }^{\phantom{(A'B')  C [AB] }     D }
 -\delta_{ [A  }^{\phantom{[A}D }\Phi_{ B]C A'B' }=0.
\end{equation}
Now antisymmetrising $[A  B C ]$ above, we find that
\begin{equation}\label{eq:primed-antisys}
   R_{(A'B') [AB C]   }^{ \phantom{(A'B') [AB C]}     D } =0,
\end{equation}
which is equivalent to
\begin{equation*}\label{eq:anti[3]}
   R_{(A'B')  AB C    }^{ \phantom{(A'B')  AB C }     D }+2R_{(A'B')C [AB]   }^{ \phantom{(A'B')C [AB] }     D }=0 ,
\end{equation*}by  $R_{(A'B')  AB C   }^{ \phantom{(A'B')C  AB  }     D }$   antisymmetric in $A,B$.
Substitute this into (\ref{eq:R('')[]}) to get
the second identity in (\ref{eq:curvature}).
\vskip 3mm
 (5) To show
the last identity in (\ref{eq:curvature}),   antisymmetrise $[A B ]$ in (\ref{eq: Trace-C-D-1})  to get
\begin{equation*}\label{eq:CD-AB}
   - R_{ C'B'  C[B A ]   }^{\phantom{ C'B'  C[B A ] }    C }\delta_{A' }^{\phantom{C'}D' }+R_{C'A' C[ A B] }^{\phantom{C'A' C[ A B]}    C }\delta_{B'
   }^{\phantom{C'}D' }
  + 2n R_{ [A B] A'  B' C'  }^{\phantom{ [A B] A'  B' C'}     D' }  -2R_{  [ A B]C'(A'B')   }^{\phantom{ [ A B]C'(A'B')}      D' }  =0
\end{equation*}by using (\ref{eq:sym}) to  the first term.
Then we have
\begin{equation}\label{eq:[AB]}(2n+1)\Lambda_{AB}\left(\varepsilon_{ C'B'} \delta_{A' }^{\phantom {C'}D' }+\varepsilon_{ C'A'} \delta_{B' }^{\phantom
{C'}D' }  \right)+
    2n R_{ [A B] A'  B' C'  }^{\phantom{[A B] A'  B' C'}     D' }  -2R_{  [ A B]C'(A'B')   }^{\phantom{ [ A B]C'(A'B')}      D' } =0,
\end{equation}
by (\ref{eq:vanishing}) and (\ref{eq:Lambda}).
 Symmetrising $(A' B'C')$ in the above identity, we get
\begin{equation}\label{eq:Symmetrising}
    (2n-2) R_{ [A B]( A'  B' C')  }^{\phantom { [A B]( A'  B' C')}   D' } =0.
\end{equation}
If $n\neq 1$,   it is equivalent to
 \begin{equation}\label{eq:Symmetrising0}
    R_{ [A B] A'  B' C'  }^{\phantom{[A B] A'  B' C'}      D' }  +2R_{  [ A B]C'(A'B')   }^{\phantom{[ A B]C'(A'B')   }       D' }=0 ,\end{equation}by
    $R_{ [A B] A'  B' C'  }^{\phantom{[A B] A'  B' C'}      D' }$    symmetric in $A'$ and $B'$.
Substitute this into (\ref{eq:[AB]}) to get the last identity in (\ref{eq:curvature}). If $n=1$, (\ref{eq:curvature-4d}) follows from the definition of ${\Psi}_{   A'  B' C'  }^{'\phantom{  A'  B' C'}      D' }$.
We have ${\Psi}_{   A'  B' C'  }^{'\phantom{  A'  B' C'}      D' }=
 {\Psi}_{  ( A'  B' C' ) }^{'\phantom{()  A'  B' C'}      D' } $  as in part  (3) by exchanging the primed and unprimed indices.
\vskip 3mm

(6) At last, we symmetrise $(A B)$ in (\ref{eq: Trace-C-D-1}) to get
\begin{equation*}\label{eq:CD-AB}
  -  R_{ C'B'  C(A B)   }^{\phantom{C'B'  C(A B) }       C }\delta_{A' }^{\phantom {C'}D' }+R_{C'A' C(A B) }^{\phantom{C'A' C(A B) }      C }\delta_{B'
  }^{\phantom {C'}D' }
  + 2n R_{ (A B)A'  B' C'  }^{\phantom{ (A B)A'  B' C'}       D' }  -R_{(A B)C' B'A'   }^{\phantom{(A B)C' B'A'  }       D' } +R_{  (A B)C'A'B'
  }^{\phantom{(A B)C'A'B'}       D' }   =0,
\end{equation*}by using (\ref{eq:sym}) again. By (\ref{eq:vanishing}), (\ref{eq:Phi})  and the second identity in (\ref{eq:curvature}), we find that
\begin{equation}\label{eq:(AB)} (2n-1)\left(-\Phi_{AB  C'B'} \delta_{A' }^{\phantom {C'}D' }+\Phi_{AB C'A'} \delta_{B' }^{\phantom {C'}D' } \right
)+
    2nR_{ (A B) A'  B' C'  }^{\phantom{(A B) A'  B' C'}       D' }  +2R_{ (A B)C'[A'B']   }^{\phantom{(A B)C'[A'B']}       D' }=0.
\end{equation}
Antisymmetrise $[A' B'C']$ above to get
\begin{equation*}
    (2n+2) R_{(A B) [A'  B' C']  }^{\phantom{(A B) [A'  B' C'] }       D' } =0.
\end{equation*}
Namely
 \begin{equation*}
    R_{ (A B) A'  B' C'  }^{\phantom{ (A B) A'  B' C' }       D' }  +2R_{  ( A B)C'[A'B']   }^{\phantom{ ( A B)C'[A'B']  }      D' }=0 .\end{equation*}
Substitute this into (\ref{eq:(AB)}) to get
\begin{equation*}
       R_{(A B) A'  B' C'  }^{\phantom{(A B) A'  B' C' }       D' }  =\Phi_{AB  C'B'} \delta_{A' }^{\phantom {C'}D' }-\Phi_{AB C'A'} \delta_{B'
    }^{\phantom {C'}D' }  .
\end{equation*}The third identity in (\ref{eq:curvature}) is obtained.
The proposition is proved. \hskip 55mm$\Box$

\vskip 5mm
 {\it Proof of Proposition \ref{prop:Einstein}}.  (1) By the second Bianchi identity
    \begin{equation}\label{eq: Bianchi2}
       \nabla_{[a}R_{bc] d}^{\phantom{bc] d }e}=0.
    \end{equation}  for  $a=AA'$, $b=BA'$, $c=CA'$ and $d=DD'$, $e=DA'$, and taking summation over repeated indices, we find that
    \begin{equation*}
       \nabla_{A'[A}R_{|A'A'|BC]  D}^{\phantom{|A'A'|BC]D}D}\delta_{D'}^{\phantom {C'}A'}+ \nabla_{A'[A}R_{BC]A'A' D'}^{\phantom{BC]A'A'D'}A'}\delta_{D
       }^{\phantom {C }D}=0.
    \end{equation*}
The first term vanishes as a trace by (\ref{eq:vanishing-trace}), while the second term is $2\nabla_{A'[A}\Lambda_{  BC] }  \varepsilon_{A'D'}\delta_{D
       }^{\phantom {C }D}$ by the last identity of (\ref{eq:curvature}) and (\ref{eq:curvature-4d}) for the $4$-dimensional case.

(2)  Note that $\nabla_{BB'}\epsilon_{CD} = 0$ implies $R_{(A'B')ABC}^{\phantom{(A'B')ABC}E}\epsilon_{ED}
+R_{(A'B')ABD}^{\phantom{(A'B')ABD}E}\epsilon_{CE}=0$, or equivalently
 \begin{equation*}
 - \epsilon_{D[A} \Phi_{ B] C A'    B' }+\epsilon_{C[A} \Phi_{ B] D A'    B' }=0.
 \end{equation*} by Proposition \ref{prop:curvature}.
If we   antisymmetrise   $[DAB]$,  we get   $\epsilon_{[D A} \Phi_{ B] C A'    B' }=0$, which can hold only if $\Phi_{ AB A'    B' }=0$.

The Ricci curvature of the connection on  tangent  bundle  is given
by
   \begin{equation*}\label{eq:Ricci}  \begin{split}
 R_{a c}& = R_{ab cd   }g^{bd }= R_{AA'BB'CC'DD' }\epsilon^{BD}\varepsilon^{B'D'}= R_{AA'BB'CC' }^{\phantom {AA'BB'CC' }BB'} \\&= R_{ A'B'A B C }^{\phantom { A'B'A B C }    B  }\delta_{C'}^{\phantom { A'}B'}+ R_{ ABA' B'C ' }^{\phantom { A'C'A' C  B }   B'
 }\delta_{C }^{\phantom { A }B } =2( n+2)\Lambda_{   A C   }\varepsilon_{A'C'}  ,
 \end{split} \end{equation*}by the curvature decomposition in Proposition \ref{prop:curvature},  if we choose the local orthonormal  quaternionic  frame  $Z_{AA'}$   so that the metric is $g_{ab}$ in ¡¡(\ref{eq:gab}). Then
  \begin{equation*}\label{eq:Ricci}  \begin{split}
 s_g& = R_{ac   }g^{ac }=2( n+2)\Lambda_{   A C   }\varepsilon_{A'C'} \epsilon^{AC}\varepsilon^{A'C'} =8n(n+2)\Lambda.
 \end{split} \end{equation*}

 It remains to show $\Lambda_{AB}=\Lambda \epsilon_{AB}$.
Recall that
   for any   given section
$\phi_{AB}$ in $\Gamma(\Lambda^2 E^*)$  satisfying $\nabla_a \phi_{AB} = 0$, we must have $ \phi_{AB} = \alpha\Lambda_{  AB  }$ for some constant
$\alpha$ when the tensor $\Lambda_{AB}\neq 0$ (this is   lemma 7.7 of \cite{BaiE}). This is because
$\nabla_a \phi_{CD} = 0$ implies $R_{[A'B']ABC}^{\phantom{(A'B')ABC}E}\phi_{ED}
+R_{[A'B']ABD}^{\phantom{(A'B')ABD}E}\phi_{CE}=0$, i.e.
\begin{equation}\label{eq:e-decomp}
     \Psi_{ AB C }^{\phantom { AB E  }    E }\phi_{ED}+ \Psi_{ AB D }^{\phantom { ABC  }    E }\phi_{C E}-2 \Lambda_{
    C(A
    }  \phi_{B)D}+2 \Lambda_{
  D (A
    }  \phi_{B) C}=0
\end{equation}
by the curvatures decomposition in Proposition \ref{prop:curvature}. Symmetrising   $(ABC)$ followed by antisymmetrising   $[CD]$ kills the $\Lambda$ terms,  but simply
multiplies the $\Psi$ terms by $\frac 23$. Thus $-2 \Lambda_{
    C(A
    }  \phi_{B)D}+2 \Lambda_{
  D (A
    }  \phi_{B) C}=0$, which is essentially just the wedge product of two exterior forms. It is easy to see that it vanishes if and  only if $\phi_{AB}$ is a multiple of $\Lambda_{AB}$.
\hskip  105mm$\Box$

\vskip 5mm
 {\it Proof of  (\ref{eq:curvature-conformal})}.    Locally we choose a coordinate chart  $U_\alpha$ with trivialization $E^*|_{U_\alpha }
= U_\alpha\times \mathbb{C}^{2n}$, $H^*|_{U_\alpha }= U \times \mathbb{C}^{2 } $, and a two-component  local quaternionic  frame $\{Z_{AA'}\}$ such that $  \varepsilon$ and $\epsilon$ are  standard.
In particular, $\varepsilon_{0'1'}= \varepsilon^{1'0'}=1$.
By using (\ref{eq:connection-change}) repeatedly, we get
\begin{equation*}\begin{split}
   \widetilde{\nabla}_{AA'} \widetilde{\nabla}_{BB'}  f_{C' }= &\widetilde{{\nabla}}_{AA'} ({\nabla}_{BB'}  f_{C' }-   \Upsilon_{B C ' }   f_{B '})\\= &
  \nabla_{AA'}  {\nabla}_{BB'}  f_{C' }- \nabla_{AA'}   \Upsilon_{ B C' } \cdot  f_{B'} - \Upsilon_{B C' }\cdot\nabla_{AA'}   f_{B'}   \\&
 - \Upsilon_{ BA' }  {\nabla}_{ AB'}   f_{C' } - \Upsilon_{ AB' }  {\nabla}_{ B A'}   f_{C' }- \Upsilon_{A C' }{\nabla}_{ BB' }   f_{A'}\\&+ \Upsilon_{B
 A' }
  \Upsilon_{ A C' }   f_{B'} + \Upsilon_{ A C' }  \Upsilon_{B A' }   f_{B'} +\Upsilon_{ A B' }  \Upsilon_{B C' }   f_{A'}.
\end{split}\end{equation*}
Note that when $A$ and $B$, $A'$ and $B'$ are exchanged, the sum of the $3$rd and $6$th terms, the sum of $4$th and $5$th terms,  and
the sum
of $7$th and $9$th terms in the right hand side above are all invariant. The above identity minus the one with $A$ and $B$, $A'$ and $B'$   exchanged becomes
 \begin{equation}\label{eq:difference-curvature}\begin{split}-\widetilde{R}_{A B  A'B'C'  }^{\phantom{AA'BB'D' }D'}=-R_{A B  A'B'C'  }^{\phantom{AA'BB'D' }D'}
&-\nabla_{AA'}   \Upsilon_{ B  C'
    }\delta_{B'}^{\phantom{A'}D'}+\nabla_{BB'}   \Upsilon_{ A C' }\delta_{A'}^{\phantom{A}D'}\\&\,+\Upsilon_{ A C' }  \Upsilon_{B A' }
    \delta_{B'}^{\phantom{A'}D'}
    -\Upsilon_{ B C' }  \Upsilon_{A B' } \delta_{A'}^{\phantom{A'}D'}.
 \end{split}\end{equation}
Note that $\Lambda_{   A B    }  =\frac 13 R_{[ AB] C' [0'1']   }^{\phantom{[ A C] C' [0'1'] }  C' }, \Omega \widetilde{\Lambda}_{   A B    } =\frac 13\widetilde{R}_{[ AB] C' [0'1']   }^{\phantom{[ A C] C' [0'1'] }  C' }$ by definition of $\Lambda$ in (\ref{eq:curvature-0}) and \begin{equation}\label{eq:up-down}
    f_{\ldots B'\ldots\phantom{ B'} \ldots }^{\phantom{\ldots B'\ldots} B'  }=  f_{\ldots 0' \ldots 1' \ldots}  -  f_{\ldots 1' \ldots 0' \ldots} =-
    f_{\ldots\phantom{ B'}\ldots B'\ldots }^{\phantom{\ldots } B'  }.
\end{equation}
Antisymmetrise $[AB]$  and $[B'C']=[0'1']$ and
take trace over $A'$-$D'$ in (\ref{eq:difference-curvature}) to
  get the transformation formula  for $\Lambda$. We have used, for example, $\nabla_{0'[A}   \Upsilon_{ B]  1'
    }-\nabla_{1'[A}   \Upsilon_{ B]  0'
    }=     \nabla_{A'[A}   \Upsilon_{ B] }^{A'} $.

By definition of $\Phi$ in  (\ref{eq:curvature-0}),  symmetrise $(B'C')$
and $(AB)$  and
take trace over $A'$-$D'$  in (\ref{eq:difference-curvature})  to
  get the transformation formula for $\Phi$ in (\ref{eq:curvature-conformal}).

Similarly as (\ref{eq:difference-curvature}) by exchanging primed and unprimed indices,
we get
\begin{equation*}
  -  \widetilde{R}_{A'B' ABC  }^{\phantom{AA'BB'D }D} =-R_{A'B' ABC  }^{\phantom{AA'BB'D }D}-\nabla_{AA'}   \Upsilon_{ C B'
    }\delta_{B}^{\phantom{A}D}+\nabla_{BB'}   \Upsilon_{ C A' }\delta_{A}^{\phantom{A}D}+\Upsilon_{ C A' }  \Upsilon_{ AB' } \delta_{B}^{\phantom{A}D}
    -\Upsilon_{ C B' }  \Upsilon_{ BA' } \delta_{A}^{\phantom{A}D},
\end{equation*} from which we get the invariance of  $\Psi$ in (\ref{eq:curvature-conformal}) by antisymmetrising $[A 'B']=[0'1']$ and using  the transformation formula  for $\Lambda$ and the definition of $\Psi$ in  (\ref{eq:curvature-0}). \hskip  60mm$\Box$

\vskip 5mm
 {\it Proof of Proposition \ref{prop:D-conformal}}. The new connection  $
   \widetilde{\nabla}$ in (\ref{eq:connection-change})  induces the covariant derivatives for $E$ and $H$ by duality:
   \begin{equation}\label{eq:nabla-raise}
   \widetilde{\nabla}_{ A  A'}  f^{B'} =
    {\nabla}_{ A  A'}   f^{B'} +  \Theta_{A A'D'}^{\phantom{A A'D'}B'}  f^{D'}
                = {\nabla}_{ A   A'}  f^{B'}+  \delta_{ A' }^{\phantom{ A'
  }B' } \Upsilon_{ A D' }  f^{D'} .\end{equation}If we raise unprimed indices by $\widetilde{\varepsilon}^{A'B'}=\Omega^{-1}{\varepsilon}^{A'B'}$ (note that $\widetilde{\nabla}\widetilde{\varepsilon}^{A'B'}=0$), we get
   \begin{equation}\label{eq:nabla-raise'} \begin{split}
  \widetilde{\nabla}_{A  }^{A '}  f^{B'}:&=\widetilde{\nabla}_{ A  C '}  f^{B'}\widetilde{\varepsilon}^{C'A'}
   =\Omega^{-1}\left({\nabla}_{A  }^{A '}  f^{B'}-{\varepsilon}^{ A'B'} \Upsilon_{ AD ' }   f^{D'}\right),
\\
\widetilde{\nabla}_{A  }^{A '}  f_{B }:&=\widetilde{\nabla}_{ A  C '}  f_{B }\widetilde{\varepsilon}^{C'A'}
   =\Omega^{-1}\left({\nabla}_{A  }^{A '}  f_{B }- \Upsilon_{ B  }^{ A'}   f_{A }\right),\\\widetilde{\nabla}_{A  }^{A '}  f_{B '
   }:&=\widetilde{\nabla}_{ A  C '}  f_{B' }\widetilde{\varepsilon}^{C'A'}
   =\Omega^{-1}\left({\nabla}_{A  }^{A '}  f_{B' }- \Upsilon_{ AB'  }    f^{A '}\right).
   \end{split}
\end{equation}  Applying these formulas of covariant derivatives repeatedly,  we get
\begin{equation}\label{eq:tilde-D} \begin{split}
\left ( \widetilde{\mathscr D}_{q}^{p}(\Omega^{-q}  f)\right)_{A_1\ldots A_{q+1}}^{ A_1'\ldots A_{p+1 }'}=&- q\Omega^{-q-1}\widetilde{\nabla}_{[A_1
}^{(A_1'}
\Omega \cdot  f_{A_2\ldots A_{q+1}]}^{ A_2'\ldots A_{p+1 }')} + \Omega^{-q}\widetilde{\nabla}_{[A_1 }^{(A_1'}   f_{A_2\ldots   A_{q+1}]}^{ A_2'\ldots
A_{p+1}')}
  \\ =&\Omega^{-q-1 }\left\{-q\Upsilon_{[A_1 }^{(A_1'}  \cdot  f_{A_2\ldots A_{q+1}]}^{ A_2'\ldots A_{p+1 }')}+ {\nabla}_{[A_1 }^{(A_1'}   f_{A_2\ldots
  A_{q+1}]}^{ A_2'\ldots A_{p+1}')}\right.\\& \left. -  \varepsilon^{(A_1'A_j'} \Upsilon_{ A D' }   f_{A_1\ldots A_{q+1}]}^{ A_2'\ldots |D'|\ldots A_{p+1 }')}
  -   \Upsilon^{(A_1'}_{[ A_j }   f_{A_2\ldots A_1 \ldots A_{q+1}]}^{ A_2' \ldots A_{p+1 }')}\right\} . \end{split}\end{equation} Here $\widetilde{\nabla}_{ A_1
}^{ A_1'}
\Omega =\widetilde{\nabla}_{ A_1B'
}
\Omega \cdot\widetilde{\varepsilon}^{B'A_1'}=Z_{ A_1B'
}
\Omega \cdot\widetilde{\varepsilon}^{B'A_1'}=\Omega^{-1} Z_{ A_1
}^{ A_1'}\Omega=\Upsilon_{ A_1
}^{ A_1'}$, and the third term in the bracket  above vanishes automatically. While for the fourth term in the bracket, we have
\begin{equation}\label{eq:anti-nabla}-
  \Upsilon^{(A_1'}_{[ A_j }   f_{A_2\ldots A_1 \ldots A_{q+1}]}^{ A_2' \ldots A_{p+1 }')}= \Upsilon^{(A_1'}_{[ A_1 }   f_{A_2\ldots A_j
 \ldots A_{q+1}]}^{ A_2' \ldots A_{p+1 }')},\qquad j=2,\ldots,  q+1,
\end{equation}  and so the first and fourth terms in (\ref{eq:tilde-D}) cancel. The first identity in (\ref{eq:D-conformal}) follows.
Similarly, we have
\begin{equation*} \begin{split}
\left ( \widetilde{\mathscr D}_{q,p} (\Omega^{-q-1}  f)\right)_{ A_1\ldots A_{q+1 } A_2'\ldots A_{p  }'}=& -(q+1)\Omega^{-q-2} \Upsilon_{[A_1 }^{ A '}
f_{
A_2\ldots A_{q+1 } ] A 'A_2'\ldots A_{p}'} +\Omega^{-q-1}\widetilde{\nabla}_{[A_1 }^{ A '}    f_{ A_2\ldots A_{q +1} ] A ' A_2'\ldots }
\\
=&\Omega^{-q-2}\left\{-(q+1)\Upsilon_{[A_1 }^{ A '}    f_{ A_2\ldots A_{q+1 } ] A 'A_2'\ldots A_{p }'} +  {\nabla}_{[A_1 }^{ A '}    f_{A_2\ldots
A_{q+1
} ] A 'A_2'\ldots A_{p }'}\right. \\&\qquad\quad-  \Upsilon_{[A_j }^{ A'}    f_{ A_2\ldots A_1\ldots  A_{q +1} ] A'A_2'\ldots A_{p }'}
-  \Upsilon_{A '[A_1  }     f_{ A_2\ldots    A_{q+1 } ]  \phantom{A '}A_2'\ldots  \ldots A_{p }'}^{\phantom{A_1\ldots   A_{q+1} ]
 }A '}\\&\qquad\quad\left.
-  \Upsilon_{A_j'[A_1  }     f_{ A_2\ldots    A_{q+1 } ] A 'A_2'\ldots\phantom{A '}  \ldots A_{p }'}^{\phantom{A_1\ldots   A_{q+1} ]
A_1'A_2'\ldots}A '}\right\}.\end{split}\end{equation*}
  The last term in the bracket above vanishes by (\ref{eq:up-down}) and $f$  symmetric   in primed indices. We also use (\ref{eq:up-down}) to raise and lower $A'$ in the
  $4$th term. The result follows from an identity similar to (\ref{eq:anti-nabla}) that the sum of the third and $4$th terms in the bracket  cancels the first term.
  We get the second identity in (\ref{eq:D-conformal}). At last
\begin{equation*}\begin{split}  &
      \widetilde{\nabla}_{A'[A_1} \widetilde{\nabla}_{A_2}^{A'}(\Omega^{-q-1}  f)_{A_3\ldots A_{q+2}]}
    \\
  & =\widetilde{\nabla}_{A'[A_1}\left\{\Omega^{-q-2}  \left(- (q+1)  \Upsilon_{[A_2 }^{A'}f_{A_3\ldots A_{q+2}]]} +   {\nabla}_{[A_2}^{A'}f_{A_3\ldots A_{q+2}]]} -  \Upsilon_{[A_m}^{A'}
   f_{A_3\ldots A_2\ldots A_{q+2}]]} \right)\right\}\\
  & =\widetilde{\nabla}_{A'[A_1}\left\{\Omega^{-q-2}\left(-      \Upsilon_{A_2 }^{A'}    +   {\nabla}_{A_2}^{A'}\right)f_{A_3\ldots
   A_{q+2}]}\right\}\\
   &= \Omega^{-q-2}\left(   (q+2)\Upsilon_{A'[A_1}    \Upsilon_{A_2  }^{A'}   - (q+2)\Upsilon_{A'[A_1}  {\nabla}_{A_2}^{A'} - {\nabla}_{ A'[A_1}  \Upsilon_{ A_2}^{A'} -\Upsilon_{[A_2}^{A'}
   {\nabla}_{|A'|A_1} + {\nabla}_{ A' [A_1}\nabla_{ A_2}^{A'}\right)f_{   A_3 \ldots A_{q+2}]}\\&
  + \Omega^{-q-2}\left (\Upsilon_{A'[A_j}    \Upsilon_{A_2  }^{A'} -\Upsilon_{A'[A_j} {\nabla}_{A_2}^{A'}  \right)f_{ \ldots A_1 \ldots A_{q+2}]} + \delta_{A'}^{\phantom{A '}A'} \Omega^{-q-2}\left (-\Upsilon_{D'[A_1}    \Upsilon_{A_2  }^{D'} +\Upsilon_{D'[A_1} {\nabla}_{A_2}^{D'}  \right)f_{   A_3 \ldots A_{q+2}]}\end{split}\end{equation*}
 where $m=3,\ldots,  q+2$,  $j=2,\ldots,  q+2$.
 Here we use the identity as (\ref{eq:anti-nabla}) in the second    identity. Note that terms of the form $ \Upsilon_{A'[A_1} {\nabla}_{A_2}^{A'}   f_{     \ldots  ]} $  cancel each other by $ -\Upsilon_{[A_2}^{A'}
   {\nabla}_{|A'|A_1]}= -\Upsilon_{A'[A_1}
   {\nabla}^{ A' }_{A_2]}$ by using (\ref{eq:up-down}) to raise and lower primed indices,  and only one   of terms of the form $-\Upsilon_{A'[A_1}    \Upsilon_{A_2  }^{A'}   f_{     \ldots  ]} $ remains. So
  \begin{equation*}\begin{split} \widetilde{\nabla}_{A'[A_1} \widetilde{\nabla}_{A_2}^{A'}(\Omega^{-q-1}  f)_{A_3\ldots A_{q+2}]}  =&\Omega^{-q-2}\left(  -  \Upsilon_{A'[A_1}    \Upsilon_{A_2  }^{A'}    -{\nabla}_{ A'[A_1}  \Upsilon_{ A_2}^{A'} + {\nabla}_{ A' [A_1}\nabla_{
 A_2}^{A'}\right)f_{   A_3 \ldots A_{q+2}]}\\=&
 \Omega^{-q-2} \left(  -2\Omega \widetilde{\Lambda}_{ [A_1 A_2  }    +2\Lambda_{ [A_1 A_2  }   + {\nabla}_{ A' [A_1}\nabla_{
 A_2}^{A'}\right)f_{   A_3 \ldots A_{q+2}]}
,
\end{split}\end{equation*} by conformal transformation formula for $\Lambda$ in (\ref{eq:curvature-conformal}). We get the last identity in (\ref{eq:D-conformal}).\hskip  23mm$\Box$

\vskip 5mm

We  need the following  theorem to prove Theorem \ref{thm:BVP}.
 \begin{thm} \label{thm:Ell-reg} {\rm (Theorem 4.12 in \cite{We})}  Let $L$ be a self-adjoint and elliptic differential operator of order $m$ on a vector
 bundle $F$ over a compact manifold. Then there exist linear mappings $
    H_L, G_L: C^\infty (X, F)\longrightarrow C^\infty (X, F)
$
so that
\item[(1)] $ H_L( C^\infty (X, F))=\mathfrak H_L(E)$ and $\dim \mathfrak H_L(E)<\infty$.

\item[(2)] $L\circ G_L+H_L=G_L\circ  L +H_L=$ the identity on $C^\infty (X, F)$.

\item[(3)]  $H_L$ and $G_L$ extend to  bounded linear operators on $L^2(X,F)$.

\item[(4)] $C^\infty (X, F)= \mathfrak H_L(F)\oplus L\circ G_L(C^\infty (X, F)) = \mathfrak H_L(F)\oplus G_L\circ  L(C^\infty (X, F))$ is an orthogonal
    decomposition with respect to the inner product in $L^2(X,F)$. In particular, we have $\mathfrak H_L(F)=\ker L$.
\end{thm}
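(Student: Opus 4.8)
The plan is to realize Theorem \ref{thm:Ell-reg} as the standard Hodge-theoretic consequence of elliptic regularity, working entirely within the Sobolev scale $W^s(X,F)$ attached to a finite partition of unity and local trivializations of $F$. First I would record that $L$, being a differential operator of order $m$, extends to bounded maps $W^{s+m}(X,F)\to W^s(X,F)$ for every $s$, and that ellipticity means the principal symbol $\sigma_m(L)(x,\xi)$ is invertible for all $\xi\neq 0$. The analytic engine is the construction of a (properly supported) pseudodifferential parametrix $P$ of order $-m$ with $PL=\mathrm{Id}+S_1$ and $LP=\mathrm{Id}+S_2$, where $S_1,S_2$ are smoothing. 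From $P$ I would extract the two facts that drive everything: the G{\aa}rding-type a priori estimate $\|u\|_{s+m}\le C_s(\|Lu\|_s+\|u\|_s)$, and elliptic regularity, i.e. $u\in W^t$ with $Lu\in W^s$ forces $u\in W^{s+m}$; in particular $Lu\in C^\infty(X,F)$ forces $u\in C^\infty(X,F)$, so the $L^2$-kernel of $L$ coincides with the smooth harmonic space $\mathfrak H_L(F)$.

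Next I would establish the structural finiteness facts. Applying the a priori estimate to $u\in\ker L$ gives $\|u\|_m\le C\|u\|_0$, so the $L^2$-unit ball of $\ker L$ is bounded in $W^m$ and hence precompact in $L^2$ by Rellich--Kondrachov on the compact manifold $X$; a normed space with precompact unit ball is finite-dimensional, giving $\dim\mathfrak H_L(F)<\infty$ and defining $H_L$ as the (finite-rank, hence bounded) $L^2$-orthogonal projection onto $\ker L$. Then I would prove that the range of $L$ is closed and that $L^2(X,F)=\ker L\oplus\overline{\mathrm{Range}\,L}$ orthogonally; since the $L^2$-closure of $L$ is self-adjoint, this complement is exactly $(\ker L)^\perp$. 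Closedness follows from the bound $\|u\|_m\le C\|Lu\|_0$ for $u\perp\ker L$, obtained by the usual contradiction argument: a normalized sequence $u_n\perp\ker L$ with $\|Lu_n\|_0\to 0$ is bounded in $W^m$ by the a priori estimate, so has an $L^2$-convergent subsequence whose limit is a unit vector lying in $\ker L$ yet orthogonal to $\ker L$ --- impossible.

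With closed range in hand I would define the Green's operator $G_L$ to be $0$ on $\ker L$ and the inverse of $L|_{(\ker L)^\perp}$ on $\mathrm{Range}\,L$; the bound of the previous step makes $G_L\colon L^2\to W^m$ bounded, indeed compact after composing with $W^m\hookrightarrow L^2$, which is item (3). Writing any $u$ as $H_Lu+u'$ with $u'\in(\ker L)^\perp$ and using $LG_Lu'=u'=G_LLu'$ yields $LG_L+H_L=G_LL+H_L=\mathrm{Id}$, which is item (2); elliptic regularity then shows $G_L(C^\infty)\subset C^\infty$, since for smooth $v$ the section $G_Lv$ satisfies $L(G_Lv)=v-H_Lv\in C^\infty$ and so lies in every $W^{s+m}$. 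Finally item (4) is just the restriction of the orthogonal $L^2$-splitting to smooth sections, with $LG_L(C^\infty)=G_LL(C^\infty)=(\ker L)^\perp\cap C^\infty$ and $\mathfrak H_L(F)=\ker L$.

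I expect the genuine obstacle to be the analytic heart of the first paragraph: building the parametrix and deducing the global elliptic estimate and regularity from pointwise invertibility of the symbol. Once that estimate is available, the remaining steps --- Rellich compactness, finite-dimensionality, closed range, and the assembly of $H_L$ and $G_L$ --- are routine functional analysis. In the application to the $k$-Cauchy-Fueter complex the operators $L$ are the associated Laplacians $D_j^{(k)*}D_j^{(k)}+D_{j-1}^{(k)}D_{j-1}^{(k)*}$, some of which are of order $4$ because $D_k^{(k)}$ is of second order, so it is essential that the theorem be formulated for arbitrary order $m$ rather than only for $m=2$.
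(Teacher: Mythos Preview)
Your outline is a correct and standard account of the Hodge-theoretic package for self-adjoint elliptic operators: parametrix $\Rightarrow$ a priori estimate and regularity $\Rightarrow$ Rellich $\Rightarrow$ finite-dimensional kernel $\Rightarrow$ closed range $\Rightarrow$ construction of $G_L$ and $H_L$. There is no gap.

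However, the comparison you were asked to make is moot: the paper does not prove Theorem~\ref{thm:Ell-reg} at all. It is stated explicitly as Theorem~4.12 of Wells~\cite{We} and is quoted as a black box, then invoked in the proof of Theorem~\ref{thm:BVP} (the Hodge-type decomposition for the $k$-Cauchy-Fueter complex). So there is no ``paper's own proof'' to compare against; your sketch essentially reproduces the argument in the cited reference. Your closing remark about the associated Laplacians being of order $4$ when $j=k$ is exactly why the paper needs the general-$m$ version and is consistent with how the result is applied.
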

 {\it Proof of  Theorem \ref{thm:BVP}}.  We only give the proof of case $j=k$. Other cases are
similar (see \cite{Wa08} \cite{Wa10} for the  flat case).

We will omit the superscript $(k)$ for simplicity. Applying Theorem \ref{thm:Ell-reg} to the elliptic differential operators
  $L=\Box_k $  of
 $4$-th order, we see that that there exists a partial inverse operator
 $\mathbf{G}_k:\Gamma\left(
 \mathscr V_k \right ) \longrightarrow \Gamma\left(
 \mathscr V_k \right )$ such that
 \begin{equation}\label{eq:Green}
   \mathbf{G}_k\Box_k =\Box_k \mathbf{G}_k={\rm id}  \qquad {\rm on}\quad \left(\ker\Box_k \right)^{\perp}.
 \end{equation}

To see $\ker\Box_k=\mathscr H^k_{ (k) }(M)$, note that $(\Box_kf,f)=0$ implies that
 $(D_kf,D_kf)=0$  and $( D_{k-1} D_{k-1}^*  f,$ $D_{k-1} D_{k-1}^*f)=0$, i.e.    $ D_kf =0$ and $D_{k-1}D_{k-1}^*f =0$. Then we have $D_{k-1}^*f =0$ by
\begin{equation*}
   ( D_{k-1}D_{k-1}^*  f,f)= (D_{k-1}^*  f,D_{k-1}^*f)=0.
\end{equation*}

Now suppose that $f\in
 \Gamma\left(
 \mathscr V_k \right )$ such that  $f\perp  ({\rm Image}\, D_{k }^{ *}\oplus  \mathscr H^k_{ (k) }(M))$. Then obviously $f\in(\ker\Box_k  )^{\perp}$ since $\ker\Box_k  = \mathscr H^k_{ (k) }(M)$,  and $D_kf=0$ by $(D_kf,  h)=(f,D_k^*
 h)=0$ for any $  h\in \Gamma\left(
 \mathscr V_{k+1} \right )$.  The decomposition (\ref{eq:Hodge-decomposition}) holds if we can  show $f\in  {\rm Image}\, D_{k-1} $. If  set
\begin{equation*}
 u:= D_{k-1}^*D_{k-1}D_{k-1}^*\mathbf{G}_k f\in
\Gamma \left(
 \mathscr V_{k-1} \right ),\end{equation*}
we have
\begin{equation}D_{k-1}u= (D_{k-1}D_{k-1}^*)^2 \mathbf{G}_k f=
\left( (D_{k-1}D_{k-1}^*)^2 +D_k^*D_k\right)\mathbf{G}_k
f=\square_k\mathbf{G}_k f=f,\label{eq:D0u=f}
\end{equation}
by (\ref{eq:Green}) in the last identity and
\begin{equation*}  D_k ^* D_k \mathbf{G}_k f =0.
\label{eq:DDG}
\end{equation*}
This is because
\begin{equation*}
   (D_k ^* D_k \mathbf{G}_k f , D_k ^* D_k \mathbf{G}_k f )= ( D_k \mathbf{G}_k f ,D_k  D_k ^* D_k \mathbf{G}_k f )=( D_k \mathbf{G}_k f ,
   \square_{k+1} D_k \mathbf{G}_k f )=0
\end{equation*}
by\begin{equation}\begin{split} D_k(D_k^*D_k+( D_{{k-1}}D_{{k-1}}^*)^2 )&
=D_kD_k^*D_k = ( (D_{k+1}^*D_{k+1})^2+ D_{k }D_{k }^* )D_{k
}.\end{split}\label{eq:commutation}
\end{equation}and so
\begin{equation*} \square_{k+1} D_{k
}\mathbf{G}_kf= D_{k
} \square_{k }\mathbf{G}_kf= D_k   f =0.\label{eq:commutation-DG}
\end{equation*}
  The
commutating relation  (\ref{eq:commutation}) follows from   $D_k
D_{{k-1}} =  D_{k+1} D_{k }=0$. The result follows. \hskip  16mm$\Box$

\end{appendix}

\end{document}